\newtheorem{theorem}[equation]{Theorem}
\newtheorem{lemma}[equation]{Lemma}
\newtheorem{proposition}[equation]{Proposition}
\newtheorem{example}[equation]{Example}
\newtheorem{corollary}[equation]{Corollary}
\newtheorem{remark}[equation]{Remark}
\newtheorem{definition}[equation]{Definition}
\numberwithin{equation}{section}
\begin{document}
\title{Local One-Sided Porosity And Pretangent Spaces}

\author{M. Alt{\i}nok, O. Dovgoshey, M. K\"{u}\c{c}\"{u}kaslan}

\address[M. Alt{\i}nok]{Department of Mathematics, Faculty of Sciences and Arts, Mersin University, Mersin, 33343 TURKEY}
\email{mayaaltinok@mersin.edu.tr}
\address[O. Dovgoshey]{Division of Applied Problems in Contemporary Analysis, Institute of Mathematics of NASU, Tereschenkivska st. 3, Kiev-4, 01601 UKRAINE}
\email{aleksdov@mail.ru}
\address[M. K\"{u}\c{c}\"{u}kaslan]{Department of Mathematics, Faculty of Sciences and Arts, Mersin University, Mersin, 33343 TURKEY}
\email{mkucukaslan@mersin.edu.tr, mkkaslan@gmail.com}

\date{}
\subjclass[]{2010MSC. Primary 28A05, Secondary 54E35, 30D40}
\keywords{local upper porosity, local lower porosity, set of porosity numbers, porosity of subsets of $\mathbb{N}$, pretangent space}

\begin{abstract}
For subsets of $\mathbb{R}^+$ we consider the local right upper porosity and the local right lower porosity as elements of a cluster set of all porosity numbers. The use of a scaling function $\mu:\mathbb{N} \to \mathbb{R}^+$ provides an extension of the concept of porosity numbers on subsets of $\mathbb{N}$. The main results describe interconnections between porosity numbers of a set, features of the scaling funtions and the geometry of so-called pretangent spaces to this set.
\end{abstract}

\maketitle

\section{Introduction}

The porosity appeared in the papers of Denjoy \cite{denjoy1}, \cite{denjoy2} and Khintchine \cite{khintchine} and, independently, Dolzenko \cite{dolzenko}. The porosity has found interesting applications in the theory of free boundaries \cite{karp}, generalized subharmonic functions \cite{dovgoshey}, complex dynamics \cite{przytycki}, quasisymmetric maps \cite{vaisala}, infinitesimel geometry \cite{bilet} and other areas of mathematics.

\begin{definition}\label{d1}
\cite{thomson} Let $E \subseteq \mathbb{R}^+$ where $\mathbb{R}^+=[0, \infty)$. The right upper porosity of $E$ at $0$ is the number
\begin{equation}\label{eq1}
\overline{p} (E)= \limsup_{h \to 0^+} \frac{\lambda (E,h)}{h}
\end{equation}
where $\lambda (E,h)$ is the length of the largest open subinterval of $(0,h)$ that contains no point of $E$.
\end{definition}
The notion of right lower porosity of $E$ at $0$ is defined similarly.

\begin{definition}\label{d2}
Let $E \subseteq \mathbb{R}^+$. The right lower porosity of $E$ at $0$ is the number
\begin{equation}\label{eq2}
\underline{p}(E)= \liminf_{h \to 0^+} \frac{\lambda(E,h)}{h}.
\end{equation}
where $\lambda(E,h)$ is the same as in Definition \ref{d1}.
\end{definition}

We will use the following terminology. A set $E \subseteq \mathbb{R}^+$ is:

Porous at $0$ if $\overline{p}(E)> 0$;

Strongly porous at $0$ if $\overline{p}(E)=1$;

Nonporous at $0$ if $\overline{p}(E)= 0$.

It should be noted that the standard definitions of porous, strongly porous and nonporous sets use the bilateral porosity at a point instead of the right porosity at a point (see, for example, \cite{thomson}) but the present paper deals only with the right porosity at $0$ and its analogues.

One of the main directions of the successful use of the local porosity machinery is the investigations of cluster sets. See, for example, \cite{dolzenko}, \cite{yanagihara}, \cite{yoshida1}, \cite{yoshida2} and \cite{zajicek}. Thus the deep relationship between the local porosity and the cluster sets is obvious for today. The first point of our paper is an inqlusion of the right upper porosity and the right lower porosity in a cluster set of all porosity numbers.

Let $p$ be a real number and let $E \subseteq \mathbb{R}^+$. We say that $p$ is a porosity number (of $E$ at $0$) if there is a sequence $(h_k)_{k \in \mathbb{N}}$ such that
\begin{equation*}
\lim_{k \to \infty} h_k = 0,~~ h_k >0 ~\text{for every}~k \in \mathbb{N},
\end{equation*}
and
\begin{equation}\label{eq3}%{phk}
p= \lim_{k \to \infty} \frac{\lambda(E, h_k)}{h_k}.
\end{equation}
The set \textbf{P}(E) of all porosity numbers of $E$ simply is the cluster set of the function
\begin{equation}\label{eq4}%{phi}
\Phi_E(h)= \frac{\lambda(E,0,h)}{h}
\end{equation}
at the point $0$. It is clear that

\begin{equation*}
 \overline{p}(E)=\max_{p \in \textbf{P}(E)}p~\text{and}~ \underline{p}(E)= \min_{p \in \textbf{P}(E)}p.
\end{equation*}
The standard interpretation of the local porosity at a point as a size of holes near this point can be formalized if we use the methods of the infinitesimal geometry and it is the second point of our paper. For this purpose we employ the so-called pretangent metric spaces recently introduced in \cite{dovgoshey1}, \cite{dovgoshey2} for arbitrary metric spaces. We recall the necessary definitions and results related to pretangent metric spaces in the second section of the paper.

The third section begins with the introduction of porosity numbers at infinity for subsets of $\mathbb{N}$. We do this with the help of a scaling function $\mu:\mathbb{N} \to \mathbb{R}^+$ and a large part of our results is a description of interrelations between the properties of scaling functions and porosity properties of subsets of $\mathbb{N}$. The main motivation here is to obtain a porosity machinery for description of asymptotic expansions, methods of summation, polynomial and rational approximations and other important mathematical constructions on $\mathbb{N}$ anyhow connected with cluster sets. The description of porosity properties of the set of primes also seems to be interesting.

The main result of Section 2 is Theorem \ref{th5} giving an infinitesimal characterization of porosity numbers. In Section 3 it is shown that for every $E \subseteq \mathbb{N}$ and every scaling function $\mu$ the upper porosity at coincedes with the upper porosity at $0$ of the set $\mu(E)$ (see Theorem \ref{th2}). Another result of Section 3 is a simple geometric description of nonporous sets (see Theorem \ref{teo211}).

The first result of Section 4 is Theorem \ref{the1} which describes the porosity numbers at infinity for subsets of $\mathbb{N}$. In Theorem \ref{th1} we describe some condition under which the sets of porosity numbers at infinity coincide for two scaling functions. Theorem  \ref{th4} gives a construction of a ``recursively enumerable'' subset of $\mathbb{R}^+$ having the same set of porosity numbers as a given set $E \subseteq \mathbb{R}^+$.

The main object of study in Section 5 is the subsets of $\mathbb{N}$ having the unitary lower porosity at infinity. We give the structural and infinitesimal characterizations of such sets (see Theorem \ref{p3} and Theorem \ref{teo540} respectively). Moreover in Corollary \ref{p2} we prove the exact inequality $\underline{p}(E) \leq \frac{1}{2}$ for every $E \subseteq \mathbb{R}^+$ with $0 \notin acE$. This inequality probably is not new but we do not have any references here.

The proof of almost all main results are simple and self-contained. The exception is Theorem \ref{teo540} with a proof using some additional results related to pretangent spaces.

\section{Porosity Numbers And Pretangent Spaces}

In this section we give a geometrical interpretation of porosity numbers in the language of pretangent spaces.

Let us recall the construction of pretangent spaces to $E$ in the particular case when $E \subseteq \mathbb{R}^+$. Let $\tilde{r}= (r_n)_{n \in \mathbb{N}}$ be a sequence of positive real numbers tending to zero. In what follows $\tilde{r}$ will be called a normalizing sequence. Let us denote by $\tilde{E}$ the set of all sequences $(x_n)_{n \in \mathbb{N}}$ of points from $E$ with $\lim_{n \to \infty}x_n=0$.

\begin{definition}\label{d5}
Two sequences $\tilde{x}= (x_n)_{n \in \mathbb{N}} \in \tilde{E}$ and $\tilde{y}= (y_n)_{n \in \mathbb{N}} \in \tilde{E}$ are mutually stable w.r.t. $\tilde{r}$ if there s a finite limit
\begin{equation}\label{eq27}
\lim_{n \to \infty} \frac{|x_n-y_n|}{r_n}:= |\tilde{x}-\tilde{y}|_{\tilde{r}}.
\end{equation}
\end{definition}
We shall say that a family $\tilde{F} \subseteq \tilde{E}$ is self-stable (w.r.t. $\tilde{r}$) if every two $\tilde{x},\tilde{y} \in \tilde{F}$ are mutually stable. A family $\tilde{F} \subseteq \tilde{X}$ is maximal self-stable is $\tilde{F}$ is self-stable and for an arbitrary $\tilde{z} \in \tilde{E}$ either $\tilde{z} \in \tilde{F}$ or there is $\tilde{x} \in \tilde{F}$ such that $\tilde{x}$ and $\tilde{z}$ are not mutually stable.

\begin{proposition}\label{p8}
(\cite{dovgoshey1}, \cite{dovgoshey2}) Let $E \subseteq \mathbb{R}^+$ be a pointed set with the marked point $0 \in E$. Then for every normalizing sequence $\tilde{r}= (r_n)_{n \in \mathbb{N}}$ there exists a maximal self-stable family $\tilde{E}_{0, \tilde{r}}$ such that
\begin{equation*}
\tilde{0}:=(0,...,0,0,...) \in \tilde{E}_{0, \tilde{r}}.
\end{equation*}
\end{proposition}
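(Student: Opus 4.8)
The plan is to construct $\tilde{E}_{0,\tilde{r}}$ by a standard maximality argument via Zorn's Lemma. First I would introduce the collection $\mathcal{C}$ of all self-stable families $\tilde{F} \subseteq \tilde{E}$ (w.r.t. $\tilde{r}$) satisfying $\tilde{0} \in \tilde{F}$, partially ordered by set inclusion. This collection is nonempty, since the singleton $\{\tilde{0}\}$ is trivially self-stable: for $\tilde{x} = \tilde{y} = \tilde{0}$ the limit in \eqref{eq27} equals $0$ and is in particular finite.

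Next I would verify the hypothesis of Zorn's Lemma, namely that every chain in $\mathcal{C}$ has an upper bound in $\mathcal{C}$. Given a chain $\{\tilde{F}_\alpha\}$, the candidate upper bound is the union $\tilde{F} = \bigcup_\alpha \tilde{F}_\alpha$. Evidently $\tilde{0} \in \tilde{F}$ and $\tilde{F}$ dominates each $\tilde{F}_\alpha$, so the only point to check is that $\tilde{F}$ is again self-stable. Here the total ordering of the chain is essential: any pair $\tilde{x}, \tilde{y} \in \tilde{F}$ satisfies $\tilde{x} \in \tilde{F}_\alpha$ and $\tilde{y} \in \tilde{F}_\beta$ for some indices, and since one of $\tilde{F}_\alpha, \tilde{F}_\beta$ contains the other, both $\tilde{x}$ and $\tilde{y}$ lie in a single member of the chain, which is self-stable; hence they are mutually stable. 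Thus $\tilde{F} \in \mathcal{C}$, and Zorn's Lemma supplies a maximal element, which I would name $\tilde{E}_{0,\tilde{r}}$.

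Finally I would confirm that maximality in the poset $(\mathcal{C}, \subseteq)$ coincides with the notion of maximal self-stable family from the definition. If $\tilde{E}_{0,\tilde{r}}$ were not maximal self-stable, there would exist $\tilde{z} \in \tilde{E} \setminus \tilde{E}_{0,\tilde{r}}$ that is mutually stable with every element of $\tilde{E}_{0,\tilde{r}}$; then $\tilde{E}_{0,\tilde{r}} \cup \{\tilde{z}\}$ would be a strictly larger member of $\mathcal{C}$, contradicting maximality. I expect this last translation to be the only delicate point, chiefly because mutual stability is not transitive, so one must argue directly from the definition rather than treating self-stable families as equivalence classes. The chain-union step, by contrast, goes through smoothly precisely because verifying self-stability only involves pairs, each of which is captured inside a single member of a totally ordered chain.
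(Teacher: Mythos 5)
Your proof is correct. Note that the paper itself gives no proof of Proposition \ref{p8} at all --- the result is imported from \cite{dovgoshey1} and \cite{dovgoshey2} --- and your Zorn's-lemma argument is the standard one for this existence statement: the collection of self-stable families containing $\tilde{0}$ is nonempty, chain unions remain self-stable because mutual stability is a pairwise condition and any pair lies in a single chain member, and a Zorn-maximal element is maximal self-stable in the sense of the definition since otherwise adjoining an offending $\tilde{z}$ would produce a strictly larger self-stable family. You are also right to flag that mutual stability is not transitive, which is precisely why the maximality argument must proceed through Zorn rather than through equivalence classes.
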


Consider a function $|.,.|_{\tilde{r}}$ on $\tilde{E}_{0, \tilde{r}} \times \tilde{E}_{0, \tilde{r}}$ where $|\tilde{x},\tilde{y}|_{\tilde{r}}= |\tilde{x}- \tilde{y}|_{\tilde{r}}$ is defined by \eqref{eq27}. Obviously, $|.,.|_{\tilde{r}}$ is symmetric, nonnegative and satisfies the inequality
\begin{equation*}
|\tilde{x}- \tilde{y}|_{\tilde{r}} \leq |\tilde{x}- \tilde{z}|_{\tilde{r}} + |\tilde{z}- \tilde{y}|_{\tilde{r}}
\end{equation*}
for all $\tilde{x},\tilde{y},\tilde{z} \in \tilde{E}_{0, \tilde{r}}$. Hence $(\tilde{E}_{0, \tilde{r}}, |.,.|_{\tilde{r}})$ is a pseudometric space.

\begin{definition}\label{d6}
A pretangent space to $E \subseteq \mathbb{R}^+$ (at the point $0 \in E$ w.r.t. $\tilde{r}$) is the metric identification of a pseudometric space $(\tilde{E}_{0, \tilde{r}}, |.,.|_{\tilde{r}})$.
\end{definition}

Since the notion of pretangent space is important for the paper, we shall describe the metric identification construction (see, for example, \cite{ke}). Define a binary relation $\thicksim$ on $\tilde{E}_{0, \tilde{r}}$ by $\tilde{x} \thicksim \tilde{y}$ if and only if $|\tilde{x}- \tilde{y}|_{\tilde{r}}= 0$. Then $\thicksim$ is an equivalence relation. Let us denote by $\Omega^{E}_{0, \tilde{r}}$ the set of equivalence classes in $\tilde{E}_{0, \tilde{r}}$ under the equivalence relation $\thicksim$. If for arbitrary $\alpha, \beta \in \Omega^{E}_{0, \tilde{r}}$ and $\tilde{x} \in \alpha, \tilde{y} \in \beta$, we set
\begin{equation}\label{eq28}
\rho (\alpha, \beta):= |\tilde{x}- \tilde{y}|_{\tilde{r}},
\end{equation}
then $\rho$ is a well-defined metric on $\Omega^{E}_{0, \tilde{r}}$. The metric space $(\Omega^{E}_{0, \tilde{r}} , \rho)$ is, by definition, the metric identification of $(\tilde{E}_{0, \tilde{r}}, |.,.|_{\tilde{r}})$.

\begin{proposition}\label{p9}
Let $E \subseteq \mathbb{R}^+$ be a pointed set with a marked point $0 \in E$ and let $\tilde{E}_{0, \tilde{r}}$ be a maximal self-stable family w.r.t. a normalizing sequence $\tilde{r}=(r_n)_{n \in \mathbb{N}}$. Then, for every pair $\tilde{x}, \tilde{y} \in \tilde{E}_{0, \tilde{r}}$, the statement $\tilde{x} \thicksim \tilde{y}$ holds if and only if
\begin{equation*}
\lim_{n \to \infty} \frac{x_n}{r_n} = \lim_{n \to \infty} \frac{y_n}{r_n}.
\end{equation*}
\end{proposition}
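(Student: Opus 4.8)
The plan is to reduce the bilateral condition $|\tilde{x}-\tilde{y}|_{\tilde{r}}=0$ to a statement about the two individual limits $\lim_{n\to\infty} x_n/r_n$ and $\lim_{n\to\infty} y_n/r_n$, and the first task is to show that these individual limits even exist. Here I would exploit the hypothesis $\tilde{0}\in\tilde{E}_{0,\tilde{r}}$ together with the self-stability of the maximal family: every $\tilde{x}=(x_n)_{n\in\mathbb{N}}\in\tilde{E}_{0,\tilde{r}}$ is, in particular, mutually stable with $\tilde{0}$. By Definition \ref{d5} this means that the finite limit $\lim_{n\to\infty}|x_n-0|/r_n=\lim_{n\to\infty}|x_n|/r_n$ exists. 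Since $E\subseteq\mathbb{R}^+$, we have $x_n\geq 0$, so $|x_n|=x_n$ and therefore $a:=\lim_{n\to\infty}x_n/r_n$ exists and is finite; the same argument applied to $\tilde{y}$ gives a finite $b:=\lim_{n\to\infty}y_n/r_n$.

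With the two individual limits in hand, the rest is a short continuity argument. Because $a$ and $b$ are finite, the limit $\lim_{n\to\infty}(x_n-y_n)/r_n$ exists and equals $a-b$; applying the continuous map $t\mapsto|t|$ then yields $|\tilde{x}-\tilde{y}|_{\tilde{r}}=\lim_{n\to\infty}|x_n-y_n|/r_n=|a-b|$. Note that $\tilde{x}$ and $\tilde{y}$ are mutually stable since they both belong to the self-stable family $\tilde{E}_{0,\tilde{r}}$, so the left-hand side is indeed defined and the passage to the limit is legitimate.

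Finally I would assemble the equivalence. By the definition of $\thicksim$, the relation $\tilde{x}\thicksim\tilde{y}$ holds precisely when $|\tilde{x}-\tilde{y}|_{\tilde{r}}=0$, which by the previous paragraph is equivalent to $|a-b|=0$, i.e.\ to $a=b$, that is $\lim_{n\to\infty}x_n/r_n=\lim_{n\to\infty}y_n/r_n$. This is exactly the asserted equivalence. The only genuinely load-bearing step, and the one I would be careful to state explicitly, is the existence of the separate limits $a$ and $b$: it is here that both the membership $\tilde{0}\in\tilde{E}_{0,\tilde{r}}$ and the nonnegativity $E\subseteq\mathbb{R}^+$ are indispensable, since for a general self-stable family only the differences $|x_n-y_n|/r_n$ are guaranteed to converge.
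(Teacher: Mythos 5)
Your proposal is correct and follows essentially the same route as the paper: both establish the existence of the individual finite limits $\lim_{n\to\infty}x_n/r_n$ and $\lim_{n\to\infty}y_n/r_n$ via mutual stability with $\tilde{0}$, and then reduce $\tilde{x}\thicksim\tilde{y}$ to equality of these limits through the identity $\lim_{n\to\infty}|x_n-y_n|/r_n=\bigl|\lim_{n\to\infty}x_n/r_n-\lim_{n\to\infty}y_n/r_n\bigr|$. Your only addition is making explicit the limit-arithmetic justification of that identity (which the paper states without proof as its equation \eqref{eq29}), and correctly flagging where nonnegativity of $E$ is used.
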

\begin{proof}
Since $\tilde{x}$ and $\tilde{0}$ are mutually stable, we have
\begin{equation*}
|\tilde{x}- \tilde{0}|_{\tilde{r}}= \lim_{n \to \infty} \frac{x_n - 0}{r_n} = \lim_{n \to \infty} \frac{x_n}{r_n} < \infty.
\end{equation*}
Similarly there exists the finite limit $\lim_{n \to \infty} \frac{y_n}{r_n}$. From the definition of $\thicksim$ we have $\tilde{x} \thicksim \tilde{y}$ if and only if $\lim_{n \to \infty} \frac{|x_n- y_n|}{r_n}=0$. Since
\begin{equation}\label{eq29}
\left| \lim_{n \to \infty} \frac{x_n}{r_n} - \lim_{n \to \infty} \frac{y_n}{r_n} \right|= \lim_{n \to \infty} \frac{|x_n- y_n|}{r_n},
\end{equation}
we obtain the logical equivalence
\begin{equation}\label{eq37n}
\left( \lim_{n \to \infty} \frac{x_n}{r_n}= \lim_{n \to \infty} \frac{y_n}{r_n} \right) \Leftrightarrow (\tilde{x}\thicksim \tilde{y})
\end{equation}
for all $\tilde{x}, \tilde{y} \in \tilde{E}_{0, \tilde{r}}$.
\end{proof}

\begin{corollary}\label{cor37}
Let $0 \in E \subseteq \mathbb{R}^+$ and let $\tilde{E}_{0, \tilde{r}}$ be a maximal self-stable family w.r.t. $\tilde{r}=(r_n)_{n \in \mathbb{N}}$. Then a sequence $\tilde{x}=(x_n)_{n \in \mathbb{N}} \in \tilde{E}$ belongs to $\tilde{E}_{0, \tilde{r}}$ if and only if there exists a finite limit $\lim_{n \to \infty}\frac{x_n}{r_n}$.
\end{corollary}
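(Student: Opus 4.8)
The plan is to prove both implications directly, leaning on the fact that $\tilde{0} \in \tilde{E}_{0,\tilde{r}}$ together with the maximality clause in the definition of a maximal self-stable family.

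For the necessity ($\Rightarrow$), I would argue exactly as in the opening lines of the proof of Proposition~\ref{p9}. Suppose $\tilde{x} = (x_n)_{n \in \mathbb{N}} \in \tilde{E}_{0,\tilde{r}}$. Since $\tilde{0} \in \tilde{E}_{0,\tilde{r}}$ and the family is self-stable, $\tilde{x}$ and $\tilde{0}$ are mutually stable, so by \eqref{eq27} the limit $\lim_{n \to \infty} \frac{|x_n - 0|}{r_n}$ exists and is finite. Because $E \subseteq \mathbb{R}^+$ forces $x_n \geq 0$, this is precisely $\lim_{n \to \infty} \frac{x_n}{r_n}$, which is therefore finite.

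For the sufficiency ($\Leftarrow$), suppose $\lim_{n \to \infty} \frac{x_n}{r_n}$ exists and is finite, and let $\tilde{x} \in \tilde{E}$. I would invoke the maximality of $\tilde{E}_{0,\tilde{r}}$: it suffices to show that $\tilde{x}$ is mutually stable with every $\tilde{y} \in \tilde{E}_{0,\tilde{r}}$, for then the only alternative permitted by maximality is $\tilde{x} \in \tilde{E}_{0,\tilde{r}}$. So fix an arbitrary $\tilde{y} = (y_n)_{n \in \mathbb{N}} \in \tilde{E}_{0,\tilde{r}}$. By the necessity just proved, $\lim_{n \to \infty} \frac{y_n}{r_n}$ exists and is finite. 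Since both normalized limits exist, the identity \eqref{eq29} applies and yields that $\lim_{n \to \infty} \frac{|x_n - y_n|}{r_n}$ exists and equals the finite quantity $\bigl| \lim_{n \to \infty} \frac{x_n}{r_n} - \lim_{n \to \infty} \frac{y_n}{r_n} \bigr|$. Hence $\tilde{x}$ and $\tilde{y}$ are mutually stable.

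As $\tilde{y} \in \tilde{E}_{0,\tilde{r}}$ was arbitrary, $\tilde{x}$ is mutually stable with every member of $\tilde{E}_{0,\tilde{r}}$; the maximality condition then rules out the possibility that there is some $\tilde{y} \in \tilde{E}_{0,\tilde{r}}$ not mutually stable with $\tilde{x}$, and so $\tilde{x} \in \tilde{E}_{0,\tilde{r}}$. I do not anticipate a genuine obstacle here, since everything reduces to known facts; the only point requiring slight care is that the arithmetic identity \eqref{eq29} presupposes the existence of both limits $\lim_{n \to \infty} \frac{x_n}{r_n}$ and $\lim_{n \to \infty} \frac{y_n}{r_n}$, which is exactly why the necessity direction must be established first and applied to the members $\tilde{y}$ of $\tilde{E}_{0,\tilde{r}}$ before the maximality clause can be used to conclude.
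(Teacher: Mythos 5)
Your proof is correct and follows essentially the same route as the paper: necessity via mutual stability of $\tilde{x}$ with $\tilde{0}$ (exactly the opening of the proof of Proposition~\ref{p9}), and sufficiency via the identity \eqref{eq29} (equivalently \eqref{eq37n}) combined with the maximality clause. The paper compresses this into two sentences, citing Proposition~\ref{p9} and \eqref{eq37n}; you have merely made explicit the details it leaves implicit, including the correct observation that \eqref{eq29} requires both normalized limits to exist before it can be applied.
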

\begin{proof}
As was shown in the proof of Proposition \ref{p9} the statement $\tilde{x} \in \tilde{E}_{0, \tilde{r}}$ implies the existence of finite $\lim_{n \to \infty} \frac{x_n}{r_n}$ . The converse follows from \eqref{eq37n}
\end{proof}
This corollary shows, in particular, that for every normalizing sequence $\tilde{r}$ and every $E \subseteq \mathbb{R}^+$ with $0 \in E$ there is a unique pretangent space $\Omega^{E}_{0,\tilde{r}}$. The last assertion generally does not hold for arbitrary metric spaces, (see \cite{adk} for details).

We can identify the metric space $(\Omega^{E}_{0, \tilde{r}}, \rho)$ with a subspace $\overline{\Omega}^{E}_{0, \tilde{r}}$ of $\mathbb{R}^+$ by the following way. For every $t \in \mathbb{R}^+$ we set $t \in \overline{\Omega}^{E}_{0, \tilde{r}}$ if and only if there is a sequence $\tilde{x} \in \tilde{E}$ such that the equality
\begin{equation*}
t = \lim_{n \to \infty} \frac{x_n}{r_n}
\end{equation*}
holds. Let us define a mapping $L :\Omega^{E}_{0, \tilde{r}} \to \overline{\Omega}^{E}_{0, \tilde{r}}$ as
\begin{equation}\label{eq211}
L(\alpha) := \lim_{n \to \infty} \frac{x_n}{r_n}
\end{equation}
where $(x_n)_{n \to \mathbb{N}}$ is an arbitrary element of $\tilde{E}_{0, \tilde{r}}$ which belongs to $\alpha$.

\begin{proposition}\label{p10}
Let $E \subseteq \mathbb{R}$ with $0 \in E$, let $\tilde{r}=(r_n)_{n \in \mathbb{N}}$ be a normalizing sequence and let $\Omega^{E}_{0, \tilde{r}}$ be the corresponding pretangent space. Then the mapping $L :\Omega^{E}_{0, \tilde{r}} \to \overline{\Omega}^{E}_{0, \tilde{r}}$ defined by \eqref{eq211} is an isometric bijection between $\Omega^{E}_{0, \tilde{r}}$ and $\overline{\Omega}^{E}_{0, \tilde{r}}$ satisfying the equality
\begin{equation*}
L(\alpha_0)=0
\end{equation*}
where $\alpha_0$ is an element of $\Omega^{E}_{0, \tilde{r}}$ containing the constant sequence $\tilde{0}$. Moreover, if $A \subseteq \mathbb{R}^+$, $0 \in A$ and $F:\Omega^E_{0, \tilde{r}} \to A$ is an isometric bijection such that $F(\alpha_0)=0$, then $A = \overline{\Omega}^{E}_{0, \tilde{r}}$.
\end{proposition}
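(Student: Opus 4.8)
The plan is to verify in turn that $L$ is well defined, bijective, and distance preserving, establish the normalization $L(\alpha_0)=0$, and then deduce the uniqueness statement by reducing it to an elementary fact about isometries of subsets of $\mathbb{R}^+$. The first four properties follow almost directly from Proposition \ref{p9}, Corollary \ref{cor37} and the identity \eqref{eq29}, so very little genuinely new computation is required.

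First I would check well-definedness: if $\tilde{x}, \tilde{y} \in \alpha$ then $\tilde{x} \thicksim \tilde{y}$, and the logical equivalence \eqref{eq37n} gives $\lim_{n} x_n/r_n = \lim_{n} y_n/r_n$, so the value $L(\alpha)$ does not depend on the chosen representative. The same equivalence, read in the opposite direction, yields injectivity: if $L(\alpha)=L(\beta)$ then any representatives $\tilde{x}\in\alpha$, $\tilde{y}\in\beta$ satisfy $\tilde{x}\thicksim\tilde{y}$, whence $\alpha=\beta$. For surjectivity, let $t\in\overline{\Omega}^{E}_{0,\tilde{r}}$; by the defining property of $\overline{\Omega}^{E}_{0,\tilde{r}}$ we have $t=\lim_{n} x_n/r_n$ for some $\tilde{x}\in\tilde{E}$, and Corollary \ref{cor37} guarantees $\tilde{x}\in\tilde{E}_{0,\tilde{r}}$, so the class of $\tilde{x}$ is mapped by $L$ to $t$. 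The isometry is immediate from \eqref{eq28} and \eqref{eq29}: for representatives $\tilde{x}\in\alpha$, $\tilde{y}\in\beta$,
\[
\rho(\alpha,\beta)=|\tilde{x}-\tilde{y}|_{\tilde{r}}=\lim_{n\to\infty}\frac{|x_n-y_n|}{r_n}=\left|\lim_{n\to\infty}\frac{x_n}{r_n}-\lim_{n\to\infty}\frac{y_n}{r_n}\right|=|L(\alpha)-L(\beta)|.
\]
Finally $L(\alpha_0)=0$ because $\tilde{0}\in\alpha_0$ gives $\lim_{n} 0/r_n=0$.

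For the \textbf{Moreover} part, the plan is to transport the problem to $\mathbb{R}^+$ through the bijection already constructed. The composition $g:=F\circ L^{-1}\colon \overline{\Omega}^{E}_{0,\tilde{r}}\to A$ is an isometric bijection between subsets of $\mathbb{R}^+$ (each carrying the metric inherited from $\mathbb{R}$), and $g(0)=F(L^{-1}(0))=F(\alpha_0)=0$. Then for every $t\in\overline{\Omega}^{E}_{0,\tilde{r}}$ we have $|g(t)|=|g(t)-g(0)|=|t-0|=t$; since $g(t)\in A\subseteq\mathbb{R}^+$ forces $g(t)\geq 0$, it follows that $g(t)=t$. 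Hence $g$ is the identity map and $A=g(\overline{\Omega}^{E}_{0,\tilde{r}})=\overline{\Omega}^{E}_{0,\tilde{r}}$.

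I do not expect a serious obstacle here: every step either quotes a result from the earlier part of the section or uses the elementary observation that an isometry of a subset of $\mathbb{R}^+$ fixing the origin must be the identity. The only point deserving care is the book-keeping that the target sets live inside $\mathbb{R}^+$ rather than $\mathbb{R}$, since it is precisely the sign constraint $g(t)\geq 0$ that excludes the reflection $t\mapsto -t$ and pins down $g$ uniquely.
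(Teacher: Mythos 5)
Your proposal is correct and follows essentially the same route as the paper: bijectivity of $L$ via Corollary \ref{cor37} and equivalence \eqref{eq37n}, the isometry property via \eqref{eq28} and \eqref{eq29}, and the uniqueness of $A$ from the fact that a point of a subset of $\mathbb{R}^+$ equals its distance to $0$. Your packaging of the \textbf{Moreover} part (showing $F \circ L^{-1}$ is the identity because it is an isometry of subsets of $\mathbb{R}^+$ fixing $0$) is just a mild rephrasing of the paper's argument, which instead writes both $A$ and $\overline{\Omega}^{E}_{0,\tilde{r}}$ as the distance set $\{\rho(\alpha_0,\beta):\beta \in \Omega^{E}_{0,\tilde{r}}\}$.
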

\begin{proof}
It follows from Corollary \ref{cor37} that $L$ is bijective. Equalities \eqref{eq27}, \eqref{eq28} and \eqref{eq211} imply that
\begin{equation*}
\rho(\alpha, \beta)= |L(\alpha)- L(\beta)|
\end{equation*}
for all $\alpha, \beta \in \Omega^{E}_{0, \tilde{r}}$. Hence $L$ is isometric. Now if $A \subseteq \mathbb{R}^+$, $0 \in A$ and $F:\Omega^{E}_{0, \tilde{r}} \to A$ is an isometric bijection such that $F(\alpha_0)=0$, then for every $x \in A$ there is a unique $\beta \in \Omega^{E}_{0,\tilde{r}}$ such that $x= F(\beta)$. Moreover we have $x= |x-0|= \rho(\beta, \alpha_0)$. Hence
\begin{equation*}
A= \{ \rho(\alpha_0, \beta): \overline{\Omega}^{E}_{0, \tilde{r}} \}.
\end{equation*}
In particular, the equality $\overline{\Omega}^{E}_{0, \tilde{r}} = \{ \rho(\alpha_0, \beta): \beta \in \overline{\Omega}^{E}_{0, \tilde{r}}  \}$ is valid. Thus $A = \overline{\Omega}^{E}_{0, \tilde{r}}$ holds.
\end{proof}

\begin{proposition}\label{prop218}
Let $0 \subseteq \mathbb{R}^+$. Then for every normalizing sequence $\tilde{r}=(r_n)_{n \in \mathbb{N}}$ the pretangent space $\Omega^{E}_{0, \tilde{r}}$ is complete.
\end{proposition}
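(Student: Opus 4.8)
The plan is to transfer the question from the abstract pretangent space to the concrete subset $\overline{\Omega}^{E}_{0, \tilde{r}} \subseteq \mathbb{R}^+$ by means of the isometric bijection $L$ furnished by Proposition \ref{p10}. Since completeness is an isometric invariant, $\Omega^{E}_{0, \tilde{r}}$ is complete if and only if $\overline{\Omega}^{E}_{0, \tilde{r}}$ is complete as a metric subspace of $\mathbb{R}^+$. Because $\mathbb{R}^+=[0,\infty)$ is complete in the usual metric and closed subsets of complete spaces are complete, it suffices to prove that $\overline{\Omega}^{E}_{0, \tilde{r}}$ is a closed subset of $\mathbb{R}^+$.

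To establish closedness I would take an arbitrary sequence $(t_k)_{k \in \mathbb{N}}$ with $t_k \in \overline{\Omega}^{E}_{0, \tilde{r}}$ and $\lim_{k \to \infty} t_k = t \in \mathbb{R}^+$, and show that $t \in \overline{\Omega}^{E}_{0, \tilde{r}}$. By the definition of $\overline{\Omega}^{E}_{0, \tilde{r}}$, each $t_k$ is realized by a sequence $\tilde{x}^{(k)}=(x_n^{(k)})_{n \in \mathbb{N}} \in \tilde{E}$ with $\lim_{n \to \infty} x_n^{(k)}=0$ and $\lim_{n \to \infty} x_n^{(k)}/r_n = t_k$. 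The goal is to produce a single sequence $\tilde{x}=(x_n)_{n \in \mathbb{N}} \in \tilde{E}$ realizing $t$, i.e. with $\lim_{n \to \infty} x_n/r_n = t$, for then $t \in \overline{\Omega}^{E}_{0, \tilde{r}}$.

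The construction is a diagonal one. For each $k$ I would choose an index $n_k$, with $n_1 < n_2 < \cdots$, so large that $|x_n^{(k)}/r_n - t_k| < 1/k$ and $|x_n^{(k)}| < 1/k$ for all $n \geq n_k$; this is possible thanks to the two limit relations defining $t_k$. Setting $x_n := 0$ for $n < n_1$ (which is legitimate since $0 \in E$) and $x_n := x_n^{(k)}$ for $n_k \leq n < n_{k+1}$ produces a sequence of points of $E$. For $n$ in the block $[n_k, n_{k+1})$ one has
\begin{equation*}
\left| \frac{x_n}{r_n} - t \right| \leq \left| \frac{x_n^{(k)}}{r_n} - t_k \right| + |t_k - t| < \frac{1}{k} + |t_k - t|,
\end{equation*}
and since $k \to \infty$ as $n \to \infty$ while $t_k \to t$, this forces $x_n/r_n \to t$; likewise $|x_n| < 1/k$ gives $x_n \to 0$. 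Hence $\tilde{x} \in \tilde{E}$ realizes $t$, so $\overline{\Omega}^{E}_{0, \tilde{r}}$ is closed and the proof is complete.

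The verifications that completeness passes through the isometry $L$ and that $\mathbb{R}^+$ is complete are immediate; the only point needing care is the diagonal extraction, where the indices $n_k$ must be chosen strictly increasing so that the blocks $[n_k, n_{k+1})$ exhaust the tail of $\mathbb{N}$ and the single sequence $\tilde{x}$ inherits both limit relations uniformly. This mild bookkeeping is the main obstacle.
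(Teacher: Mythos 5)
Your proposal is correct and follows essentially the same route as the paper: both reduce completeness to showing that $\overline{\Omega}^{E}_{0,\tilde{r}}$ is closed in $\mathbb{R}^+$ (via the isometry of Proposition \ref{p10}) and then realize the limit point by a diagonal extraction from representing sequences of the approximating points. The only differences are cosmetic --- the paper argues by contradiction and takes the strict diagonal $(x_{n,n})_{n\in\mathbb{N}}$, whereas you argue directly with a block-diagonal construction.
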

\begin{proof}
It suffices to show that $\overline{\Omega}^{E}_{0, \tilde{r}}$ is a closed subset of $\mathbb{R}^+$ for every $\tilde{r}$. Suppose the contrary and choose a point $x \in \mathbb{R}^+$ such that
\begin{equation*}
x \in ac \overline{\Omega}^{E}_{0, \tilde{r}} ~\text{and}~ x \notin \overline{\Omega}^{E}_{0, \tilde{r}}.
\end{equation*}
Then there is a sequence $(x_m)_{m \in \mathbb{N}}$ with $x= \lim_{m \to \infty}x_m$ and $x_m \in \overline{\Omega}^{E}_{0, \tilde{r}}$ and $|x- x_{m+1}| < |x-x_m|$ for every $m \in \mathbb{N}$. For every $m \in \mathbb{N}$ we can find a sequence $(x_{n,m})_{n \in \mathbb{N}} \in \tilde{E}$ satisfying the conditions
\begin{equation*}
x_m = \lim_{n \to \infty} \frac{x_{n,m}}{r_n} ~\text{and}~\left| \frac{x_{n,m}}{r_n}- x_m \right| < |x_m-x|
\end{equation*}
for all $n \in \mathbb{N}$. Using the last inequality with $m=n$ we obtain
\begin{equation*}
\left| \frac{x_{n,n}}{r_n}- x_n \right| < |x_n-x|.
\end{equation*}
Hence
\begin{equation*}
\lim_{n \to \infty} \left| \frac{x_{n,n}}{r_n}- x_n \right| =0
\end{equation*}
that implies
\begin{equation*}
\lim_{n \to \infty} \frac{x_{n,n}}{r_n} = \lim_{n \to \infty} x_n =x.
\end{equation*}
By Corollary \ref{cor37} we have $(x_{n,n})_{n \in \mathbb{N}} \in \tilde{E}_{0, \tilde{r}}$. Thus $x \in \overline{\Omega}^{E}_{0, \tilde{r}}$, which is a contradiction.
\end{proof}

\begin{definition}\label{dkck}
Let $E$ and $T$ be subsets of $\mathbb{R}^+$. We shall write $E \preceq T$ if for every sequence $(e_n)_{n \in \mathbb{N}}$ with $\lim_{n \to \infty}e_n = 0$ and $e_n \in E \backslash \{0\}$, $(e_n)_{n \in \mathbb{N}} \in \tilde{E}$, there is a sequence $(t_n)_{n \in \mathbb{N}}$, such that
\begin{equation*}
\lim_{n \to \infty} \frac{e_n}{t_n}=1
\end{equation*}
and $t_n \in T \backslash \{0\}$ for every $n \in \mathbb{N}$.
\end{definition}

\begin{proposition}\label{p11}
Let $E$ and $T$ be subsets of $\mathbb{R}^+$, $0 \in E \cap T$ and let $\tilde{r}$ be a normalizing sequence. If $E \preceq T$ and $T \preceq E$, then the equality
\begin{equation}\label{eq212}
\overline{\Omega}^{E}_{0, \tilde{r}} = \overline{\Omega}^{T}_{0, \tilde{r}}
\end{equation}
holds.
\end{proposition}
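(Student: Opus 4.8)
The plan is to split the asserted equality into the two inclusions $\overline{\Omega}^{E}_{0,\tilde r}\subseteq\overline{\Omega}^{T}_{0,\tilde r}$ and $\overline{\Omega}^{T}_{0,\tilde r}\subseteq\overline{\Omega}^{E}_{0,\tilde r}$. Since the pair of hypotheses $E\preceq T$ and $T\preceq E$ is symmetric under interchanging $E$ and $T$, it suffices to deduce the first inclusion from $E\preceq T$; the second then follows word for word after swapping the roles of $E$ and $T$ and using $T\preceq E$. So fix $t\in\overline{\Omega}^{E}_{0,\tilde r}$. By the definition of $\overline{\Omega}^{E}_{0,\tilde r}$ there is a sequence $\tilde x=(x_n)_{n\in\mathbb N}\in\tilde E$ with $t=\lim_{n\to\infty}x_n/r_n$, and the goal is to produce a sequence of points of $T$ tending to $0$ that realizes the same number $t$.

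The case $t=0$ is immediate and requires no appeal to $E\preceq T$: since $0\in T$, the constant sequence $\tilde 0$ consists of points of $T$, tends to $0$, and yields $0=\lim_{n\to\infty}0/r_n\in\overline{\Omega}^{T}_{0,\tilde r}$. Assume therefore that $t>0$. Because $r_n\to0$ while $x_n/r_n\to t>0$, the terms $x_n$ are positive for all large $n$, so only finitely many of them can vanish. I would first replace those finitely many zero terms by an arbitrary fixed element of $E\setminus\{0\}$. Altering finitely many terms changes neither the limit $x_n\to0$ nor the limit $x_n/r_n\to t$, so this produces a sequence $(e_n)_{n\in\mathbb N}\in\tilde E$ with $e_n\in E\setminus\{0\}$ for every $n$ and $e_n/r_n\to t$. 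This normalization is precisely what makes Definition \ref{dkck} applicable to $(e_n)$.

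Now I would invoke $E\preceq T$ to obtain a sequence $(t_n)_{n\in\mathbb N}$ with $t_n\in T\setminus\{0\}$ for every $n$ and $\lim_{n\to\infty}e_n/t_n=1$. The key step is to transfer the limit along the normalizing sequence $\tilde r$: writing
\begin{equation*}
\frac{t_n}{r_n}=\frac{t_n}{e_n}\cdot\frac{e_n}{r_n}
\end{equation*}
and noting that $e_n/t_n\to1$ forces $t_n/e_n\to1$ (the limit being $1\neq0$), I obtain $t_n/r_n\to 1\cdot t=t$. It remains to verify that $(t_n)$ tends to $0$, so that, by the definition of $\overline{\Omega}^{T}_{0,\tilde r}$, it represents an element of that set; this follows from $t_n=(t_n/r_n)\,r_n\to t\cdot 0=0$, since $t_n/r_n$ converges to the finite number $t$ and $r_n\to0$. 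Consequently $t=\lim_{n\to\infty}t_n/r_n$ is realized by a sequence of points of $T$ converging to $0$, so $t\in\overline{\Omega}^{T}_{0,\tilde r}$, which establishes the inclusion and, by symmetry, the proposition.

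The points I expect to require the most care are bookkeeping rather than conceptual: first, Definition \ref{dkck} is phrased only for sequences with nonzero entries, so one must separately dispose of the finitely many vanishing terms and of the case $t=0$; and second, one must confirm that the comparison sequence $(t_n)$ furnished by $E\preceq T$ genuinely converges to $0$, since $E\preceq T$ by itself only controls the ratio $e_n/t_n$. Both matters reduce to the same elementary remark that $t_n/r_n\to t$ is finite while $r_n\to0$.
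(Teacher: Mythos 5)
Your proof is correct and follows essentially the same route as the paper's: represent $t$ by a sequence in one of the sets, invoke the relation $\preceq$ to obtain a comparison sequence in the other set, and transfer the limit along $\tilde r$ via the ratio tending to $1$. The only differences are bookkeeping --- the paper instead splits on $0 \in acE$ versus $0 \notin acE$ and passes over the two points you check explicitly (that the representative sequence may be taken with nonzero terms, and that the comparison sequence furnished by $\preceq$ actually tends to $0$), so your write-up is, if anything, slightly more complete.
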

\begin{proof}
Let $E \preceq T$ and $T \preceq E$ hold. These conditions imply
\begin{equation*}
(0 \in acT) \Leftrightarrow (0 \in acE).
\end{equation*}
If $0 \notin acT$ and $0 \notin acE$, then we evidently obtain
\begin{equation*}
\overline{\Omega}^{E}_{0, \tilde{r}}= \{0\} = \overline{\Omega}^{T}_{0, \tilde{r}}.
\end{equation*}
Now let $0 \in acT$ and $0 \in acE$. If $t \in \overline{\Omega}^{T}_{0, \tilde{r}}$ and $t \neq 0$, then by Corollary \ref{cor37} there is $(t_n)_{n \in \mathbb{N}} \in \tilde{T}$ such that
\begin{equation}\label{eq213}
t = \lim_{n \to \infty} \frac{t_n}{r_n}
\end{equation}
and $t_n \in T \backslash \{0\}$ for all $n$. From $T \preceq E$ it follows that there is $(s_n)_{n \in \mathbb{N}} \in \tilde{E}$ such that
\begin{equation}\label{eq214}
\lim_{n \to \infty} \frac{s_n}{t_n}=1.
\end{equation}
Limit relations \eqref{eq213} and \eqref{eq214} imply that
\begin{equation*}
t= \lim_{n \to \infty}\frac{s_n}{r_n}.
\end{equation*}
Hence $t \in \overline{\Omega}^{E}_{0, \tilde{r}}$. Thus we have the inclusion $\overline{\Omega}^{T}_{0, \tilde{r}} \subseteq \overline{\Omega}^{E}_{0, \tilde{r}}$. Using the statement $E \preceq T$ we obtain the inclusion $\overline{\Omega}^{E}_{0, \tilde{r}} \subseteq \overline{\Omega}^{T}_{0, \tilde{r}}$. Equality \eqref{eq212} follows.
\end{proof}
\begin{corollary}\label{c3}
Let $0 \in E$ and $E \subseteq \mathbb{R}^+$ and let $\overline{E}$ be the closure of $E$. Then the equality
\begin{equation*}
\overline{\Omega}^{E}_{0, \tilde{r}} = \overline{\Omega}^{\overline{E}}_{0, \tilde{r}}
\end{equation*}
holds for every normalizing sequence $\tilde{r}$.
\end{corollary}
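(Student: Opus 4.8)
The plan is to deduce the corollary directly from Proposition \ref{p11} applied with $T = \overline{E}$. Since $0 \in E \subseteq \overline{E}$, the hypothesis $0 \in E \cap \overline{E}$ is automatic, so the whole task reduces to verifying the two relations $E \preceq \overline{E}$ and $\overline{E} \preceq E$; once both are in hand, Proposition \ref{p11} immediately yields $\overline{\Omega}^{E}_{0, \tilde{r}} = \overline{\Omega}^{\overline{E}}_{0, \tilde{r}}$.

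The relation $E \preceq \overline{E}$ I expect to be completely trivial. Given any sequence $(e_n)_{n \in \mathbb{N}}$ with $e_n \in E \setminus \{0\}$ and $e_n \to 0$, the inclusion $E \subseteq \overline{E}$ gives $e_n \in \overline{E} \setminus \{0\}$, so I would simply take $t_n := e_n$, which lies in $\overline{E} \setminus \{0\}$ and satisfies $\frac{e_n}{t_n} = 1 \to 1$. Hence $E \preceq \overline{E}$.

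For the reverse relation $\overline{E} \preceq E$ I would use that every point of $\overline{E}$ is approximable by points of $E$. Let $(e_n)_{n \in \mathbb{N}}$ be a sequence with $e_n \in \overline{E} \setminus \{0\}$ and $e_n \to 0$. For each $n$, since $e_n \in \overline{E}$ and $e_n > 0$, I would choose $t_n \in E$ with $|t_n - e_n| < e_n 2^{-n}$ (any error small relative to $e_n$ will serve). Then $t_n \geq e_n(1 - 2^{-n}) > 0$, so $t_n \in E \setminus \{0\}$, and
\[
\left| \frac{e_n}{t_n} - 1 \right| = \frac{|e_n - t_n|}{t_n} < \frac{e_n 2^{-n}}{e_n(1 - 2^{-n})} = \frac{2^{-n}}{1 - 2^{-n}} \to 0 \quad (n \to \infty),
\]
so $\frac{e_n}{t_n} \to 1$ and thus $\overline{E} \preceq E$.

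With both relations established, Proposition \ref{p11} completes the argument. The only step with any content is the approximation in $\overline{E} \preceq E$, and even there the single point requiring care is to keep the approximation error small \emph{relative} to $e_n$ rather than merely absolutely small, so that simultaneously $t_n \neq 0$ and the ratio $\frac{e_n}{t_n}$ tends to $1$; I do not anticipate any serious obstacle. I would also remark that Definition \ref{dkck} does not separately demand $t_n \to 0$, since this follows automatically from $e_n \to 0$ together with $\frac{e_n}{t_n} \to 1$.
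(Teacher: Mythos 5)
Your proposal is correct and is exactly the argument the paper intends: Corollary \ref{c3} is stated without proof as an immediate consequence of Proposition \ref{p11}, applied with $T=\overline{E}$ after checking $E \preceq \overline{E}$ (trivially) and $\overline{E} \preceq E$ (by approximation with error small relative to $e_n$). Your write-up simply fills in these routine verifications, including the correct observation that the relative-error choice guarantees both $t_n \neq 0$ and $e_n/t_n \to 1$.
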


\begin{remark}\label{r2}
If equality \eqref{eq212} holds for every normalizing sequence $\tilde{r}$ , then it can be proved that $E \preceq T$ and $T \preceq E$. Similar results are valid for subspaces of arbitrary metric spaces, (see \cite{dovgoshey3}).
\end{remark}
Let $(n_k)_{k \in \mathbb{N}}$ be an infinite strictly increasing sequence of natural numbers. Let $\tilde{r}'= (r_{n_k})_{k \in \mathbb{N}}$ be the corresponding subsequence of a normalizing sequence $\tilde{r}=(r_n)_{ \in \mathbb{N}}$. Define a subset $\tilde{E}_{0, \tilde{r}'}$ of a set $\tilde{E}$ by the rule
\begin{equation*}
\left( (x_n)_{n \in \mathbb{N}} \in \tilde{E}_{0, \tilde{r}'} \right) \Leftrightarrow \left( \lim_{k\to \infty} \frac{x_{n_k}}{r_{n_k}} < \infty \right).
\end{equation*}
One easily checks that $\tilde{E}_{0, \tilde{r}'}$ is maximal self-stable w.r.t. $\tilde{r}'$, i.e., for all $\tilde{x}, \tilde{y} \in \tilde{E}_{0, \tilde{r}'}$ there are finite limits
\begin{equation*}
|\tilde{x}- \tilde{y}|_{\tilde{r}'} = \lim_{k \to \infty} \frac{|x_{n_k}- y_{n_k}|}{r_{n_k}},
\end{equation*}
and if $\tilde{z} \in \tilde{E} \backslash \tilde{E}_{0, \tilde{r}'}$, then there exists $\tilde{x} \in \tilde{E}_{0, \tilde{r}'}$ such that the limit
\begin{equation*}
\lim_{k \to \infty} \frac{|x_{n_k}- z_{n_k}|}{r_{n_k}}
\end{equation*}
is infinite or does not exist. It is also clear that $\tilde{E}_{0,\tilde{r}} \subseteq \tilde{E}_{0, \tilde{r}'}$ and that $|.,.|_{r'}$ is a pseudometric on $\tilde{E}_{0, \tilde{r}}$ satisfying the equality
\begin{equation*}
|\tilde{x}- \tilde{y}|_{\tilde{r}} = |\tilde{x}- \tilde{y}|_{\tilde{r}'}
\end{equation*}
for all $\tilde{x}, \tilde{y} \in \tilde{E}_{0, \tilde{r}}$. Let $(\Omega_{0, \tilde{r}'}^{E}, \rho')$ be the metric identification of $(\tilde{E}_{0, \tilde{r}'}, |.,.|_{\tilde{r}'})$. Define the subset $\overline{\Omega}^{E}_{0, \tilde{r}'}$ of $\mathbb{R}^+$ and the mapping $L': \Omega^{E}_{0, \tilde{r}'} \to \overline{\Omega}^{E}_{0, \tilde{r}'} $ by the rules
\begin{equation*}
\left( t \in \overline{\Omega}^{E}_{0, \tilde{r}'} \right) \Leftrightarrow \left( \text{there is}~\tilde{x} \in \tilde{E}_{0, \tilde{r}'}~\text{with}~\lim_{k \to \infty}\frac{x_{n_k}}{r_{n_k}}=t \right)
\end{equation*}
and, respectively,
\begin{equation*}
\Omega_{0, \tilde{r}'}^{E} \ni \alpha \ni (x_n)_{n \in \mathbb{N}} \mapsto L'(\alpha) = \lim_{k \to \infty} \frac{x_{n_k}}{r_{n_k}} \in \overline{\Omega}_{0, \tilde{r}'}^{E}.
\end{equation*}
Then $L'$ is an isometric bijection such that $L'(\alpha_0')=0$ where $\alpha_0'$ is the point of $\Omega_{0, \tilde{r}'}^{E}$ which contains the constant sequence $\tilde{0}$. Moreover, it is easy to prove that the diagram
\begin{equation*}
\left.\begin{array}{ccccc}\tilde{E}_{0,r} & \overset{\pi}{\longrightarrow} & \Omega^{E}_{0,\tilde{r}} & \overset{L}{\longrightarrow} &  \overline{\Omega}^{E}_{0,\tilde{r}} \\~ & ~ & ~ & ~ & ~ \\ in_{\tilde{E}}  \downarrow & ~ & \downarrow em' & ~ & \downarrow in_{\mathbb{R}^+} \\~ & ~ & ~ & ~ & ~ \\ \tilde{E}_{0,r'} & \overset{\pi'}{\longrightarrow} & \Omega^{E}_{0,\tilde{r}'} & \overset{L'}{\longrightarrow} &  \overline{\Omega}^{E}_{0,\tilde{r}'}\end{array}\right.
\end{equation*}
is commutative, where $\pi$ and $\pi'$ are the natural projections

\begin{equation*}
\pi(\tilde{x})= \{ \tilde{y} \in \tilde{E}_{0, \tilde{r}}:|\tilde{x}- \tilde{y}|_{\tilde{r}}=0 \}, ~
\pi'(\tilde{x})= \{ \tilde{y} \in \tilde{E}_{0, \tilde{r}'}:|\tilde{x}- \tilde{y}|_{\tilde{r}'}=0 \};
\end{equation*}
$in_{\tilde{E}}$ and $in_{\mathbb{R}^+}$ are the injections,
\begin{equation*}
in_{\tilde{E}}(\tilde{x})= \tilde{x}~\text{and}~in_{\mathbb{R^+}}(t)=t;
\end{equation*}
and $em'$ is an isometric embedding for which the equality $em' \cdot \pi = in_{\tilde{E}} \cdot \pi'$ holds.

Now we are ready to describe the set of porosity numbers $\textbf{P}(E)$ on the language of pretangent spaces.

\begin{theorem}\label{th5}
Let $E \subseteq \mathbb{R}^+$ and let $0 \in E$. A number $p \in \mathbb{R}^+$ is a porosity number of the set $E$ at $0$ if and only if there are a normalizing sequence $\tilde{r}=(r_n)_{n \in \mathbb{N}}$ and an open interval $(a,b) \subseteq (0,1)$ with $|a-b|=p$ which satisfy the following conditions.

  ($\textit{i}$) The equality
  \begin{equation}\label{eq215}
  (a,b) \cap \overline{\Omega}_{0, \tilde{r}'}^{E} = \emptyset
  \end{equation}
  holds for every subsequence $\tilde{r}'= (r_{n_k})_{k \in \mathbb{N}}$ of $\tilde{r}$.

  ($\textit{ii}$) If $(c,d) \subseteq (0,1)$ is an open interval such that
  \begin{equation}\label{eq216}
  (c,d) \cap \overline{\Omega}_{0, \tilde{r}'}^{E} = \emptyset
  \end{equation}
  holds for every subsequence $\tilde{r}'$ of $\tilde{r}$, then $|c-d| \leq |a-b|$.

\end{theorem}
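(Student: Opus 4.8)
The plan is to translate condition (i) into a statement about ordinary holes of $E$ and then match it with the sequence defining a porosity number. Throughout write $E/r_n:=\{e/r_n:e\in E\}$. The first thing I would record is a dictionary between the pretangent sets and the Kuratowski limits of the sets $E/r_n$: by Corollary \ref{cor37} a number $t>0$ lies in $\overline{\Omega}^{E}_{0,\tilde r'}$ (for a subsequence $\tilde r'=(r_{n_k})$) exactly when there are $e_k\in E$ with $e_k/r_{n_k}\to t$; hence $\overline{\Omega}^{E}_{0,\tilde r'}\cap(0,\infty)$ is the lower Kuratowski limit of $(E/r_{n_k})$, and the union of these over all subsequences is the upper limit $S:=\limsup_n(E/r_n)$. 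Consequently (i) is equivalent to $(a,b)\cap S=\emptyset$, which in turn is equivalent to the \emph{persistence} of the hole: for every $\varepsilon\in(0,p/2)$ there is $N$ with
\[
E\cap\bigl((a+\varepsilon)r_n,(b-\varepsilon)r_n\bigr)=\emptyset\qquad(n\ge N),
\]
while (ii) says exactly that the largest open subinterval of $(0,1)$ disjoint from $S$ has length $p$.

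For the direct implication, assume $p=\lim_k\lambda(E,h_k)/h_k$ and take the normalizing sequence $\tilde r:=(h_k)$. For each $k$ let $(\alpha_k,\beta_k)$ be a largest $E$-free open subinterval of $(0,h_k)$, so $(\alpha_k/h_k,\beta_k/h_k)$ is a largest $E/h_k$-free subinterval of $(0,1)$ of length $\lambda(E,h_k)/h_k\to p$. Passing to a subsequence I may assume $\alpha_k/h_k\to a$, $\beta_k/h_k\to b$ with $b-a=p$ and $(a,b)\subseteq(0,1)$. Condition (i) then holds because any alleged point $e_j/r_{n_j}\to t\in(a,b)$ would, for large $j$, satisfy $e_j\in(\alpha_{n_j},\beta_{n_j})$, contradicting that this interval avoids $E$. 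Condition (ii) follows from the dictionary: a universal gap $(c,d)$ produces, for every $c<c'<d'<d$, an $E$-free interval $(c'h_k,d'h_k)\subseteq(0,h_k)$, whence $\lambda(E,h_k)/h_k\ge d'-c'$; letting $k\to\infty$ and $c'\to c$, $d'\to d$ gives $d-c\le p$.

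For the converse I would first extract the hole's lower bound and then manufacture matching windows. Persistence gives $\lambda(E,r_n)/r_n\ge p-o(1)$, so $\liminf_n\lambda(E,r_n)/r_n\ge p$. The difficulty is the reverse inequality, since for a single $n$ the largest hole of $E/r_n$ in $(0,1)$ may exceed $p$ (such holes are transient and need not survive in $S$). To neutralize this I would select, by a diagonal argument over a countable dense subset $\{s_i\}$ of $S\cap(0,1)$, an increasing $(n_k)$ such that $E/r_{n_k}$ contains a point within $1/k$ of each of $s_1,\dots,s_k$; then $\liminf_k(E/r_{n_k})\supseteq S$, while trivially $\limsup_k(E/r_{n_k})\subseteq S$, so $(E/r_{n_k})$ Kuratowski-converges to $S$. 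Along such a sequence any limit of the largest holes is a gap of $S$, hence of length $\le p$ by (ii), whereas the persistent hole forces the length to be $\ge p$; thus $\lambda(E,r_{n_k})/r_{n_k}\to p$ and $p$ is a porosity number witnessed by $h_k:=r_{n_k}$.

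The step I expect to be the crux is precisely this diagonal selection in the converse: one must realize the upper limit $S$ as an honest Kuratowski limit of a subsequence in order to force the oversized transient holes to dissolve, and then invoke the easy but essential semicontinuity of the ``largest gap'' functional under Kuratowski convergence. The degenerate case $p=0$ (where $(a,b)$ is empty and (ii) reads $S\supseteq(0,1)$) fits the same scheme, interpreting ``largest hole $\to 0$'' as ``$E/r_n$ becomes dense''.
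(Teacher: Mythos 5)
Your dictionary (pretangent sets as lower Kuratowski limits of the rescalings $E/r_{n_k}$, their union over all subsequences as $S=\limsup_n(E/r_n)$, condition (\textit{i}) as persistence of the hole) is correct, and your proof of the ``only if'' half is sound --- indeed tidier than the paper's, which first reduces to closed $E$ and then argues by contradiction using the endpoints of the maximal holes. The genuine gap is exactly the step you flagged as the crux: the diagonal selection in the converse. Membership $s_i\in S$ gives, for each $i$ \emph{separately}, infinitely many indices $n$ with $E/r_n$ meeting a prescribed neighbourhood of $s_i$; it does not give a single index serving several targets at once, because different points of $S$ may be witnessed by disjoint sets of indices. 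Concretely, let $a_k=2^{-k^2}$, $E=\{a_k:k\in\mathbb{N}\}\cup\{0\}$, let $(t_n)$ run through a countable dense subset of $(0,3/4]$ with every value repeated infinitely often and $t_n\geq 2^{-n}$, and set $r_n=a_n/t_n$. Then $r_n\to 0$, and for any fixed $0<u<v<\infty$ and all large $n$ we have $E/r_n\cap[u,v]\subseteq\{t_n\}$, since $t_na_{n-1}/a_n\geq 2^{n-1}$ and $t_na_{n+1}/a_n\leq 2^{-(2n+1)}$. Thus each rescaled set carries at most one point at unit scale, so no $n$ has points within $1/8$ of both $1/4$ and $1/2$, although both lie in $S=[0,3/4]$: your subsequence $(n_k)$ does not exist, and $S$ is not the Kuratowski limit of any subsequence of $(E/r_n)$.

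The same example shows the gap cannot be patched, because the ``if'' direction being proved fails as stated. Here $\overline{\Omega}^{E}_{0,\tilde{r}'}\subseteq S=[0,3/4]$ for every subsequence $\tilde{r}'$, so conditions (\textit{i}) and (\textit{ii}) hold with $(a,b)=(3/4,1)$, i.e.\ with $p=1/4$; but $a_{k+1}/a_k\to 0$, so $E\in\textbf{SSP}$ and hence $\underline{p}(E)=\tfrac12$ by Proposition \ref{p6}, which means $\min\textbf{P}(E)=\tfrac12$ and $1/4\notin\textbf{P}(E)$. For comparison, the paper's own proof of this direction stumbles at the corresponding place: it applies condition (\textit{ii}) to the interval $(a^{*},b^{*})$, which is known to be a universal gap only for subsequences of $\tilde{r}^{**}$, not of $\tilde{r}$, in order to conclude $|a^{*}-b^{*}|\leq|a-b|$; in the example above every such $(a^{*},b^{*})$ has length at least $1/2>1/4$. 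The honest repair is not to force convergence of $E/r_{n_k}$ to all of $S$ (impossible), but to extract, by Kuratowski compactness, a subsequence converging to \emph{some} closed $S'\subseteq S$ along which $\lambda(E,r_{n_k})/r_{n_k}$ also converges; the limit of the largest holes is then a gap of $S'$, and one needs a hypothesis bounding the gaps of $S'$ --- that is, condition (\textit{ii}) imposed along every subsequence $\tilde{r}'$ of $\tilde{r}$ rather than along $\tilde{r}$ alone. With that stronger hypothesis both your scheme (so modified) and the paper's go through; with the hypothesis as printed, neither does.
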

\begin{proof}
If $0 \notin acE$, then the set $\textbf{P}(E)$ of porosity numbers contains only the number $1$ and the equality $\overline{\Omega}_{0, \tilde{r}'}^{E}= \{ 0 \}$ holds for every normalizing sequence $\tilde{r}$. Hence the theorem is trivially true when $0 \notin acE$.

Let us consider the case $0 \in acE$. It follows from Corollary \ref{c3} that we can assume that $E$ is a closed set. Let $p$ be a porosity number of $E$. Then there is a sequence $\tilde{h}= (h_m)_{m \in \mathbb{N}}$ such that $\lim_{m \to \infty}h_m =0$, $h_m > 0$ for every $m \in \mathbb{N}$, and
\begin{equation}\label{eq217}
p = \lim_{m \to \infty} \frac{\lambda(E, h_m)}{h_m}
\end{equation}
where $\lambda(E, h_m)$ is the same as in Definition \ref{d1}.

For every $m \in \mathbb{N}$, let $(a_m, b_m)$ be the largest open interval in $(0, h_m)$ such that
\begin{equation}\label{eq218}
(a_m, b_m) \cap E = \emptyset.
\end{equation}
(This interval can be empty, $a_m = b_m$, if $p=0$.) Then, by definition, we have
\begin{equation}\label{eq219}
\lambda(E, h_m) = b_m- a_m
\end{equation}
and, in addition, $a_m \in E$ because $E$ is closed. Suppose also that there is a subsequence $\tilde{h}'=(h_{m_n})_{n \in \mathbb{N}}$ of $\tilde{h}$ such that
\begin{equation}\label{eq2.20*}
b_{m_n} \in E ~\text{for every}~ n \in \mathbb{N}.
\end{equation}
Passing this subsequence we can also assume the existence the finite limit
\begin{equation}\label{eq220}
b= \lim_{n \to \infty} \frac{b_{m_n}}{h_{m_n}}.
\end{equation}
For every $n \in \mathbb{N}$ denote by $r_n$ the element $h_{m_n}$ of $\tilde{h}$ and consider the pretangent space $\Omega_{0, \tilde{r}}^{E}$ w.r.t. the normalizing sequence $\tilde{r}=(r_n)_{n \in \mathbb{N}}$. It is clear that $a \in \overline{\Omega}_{0, \tilde{r}'}^{E}$ and $b \in \overline{\Omega}_{0, \tilde{r}}^{E}$ hold for every subsequence $\tilde{r}'$ of $\tilde{r}$. Equalities \eqref{eq217}, \eqref{eq219} and \eqref{eq220} give us the equality $|a-b|=p$. Let us prove \eqref{eq215} for every $\tilde{r}'$.

Suppose contrary that there is a subsequence $\tilde{r}'= (r_{n_k})_{k \in \mathbb{N}}$ of $\tilde{r}$ such that
\begin{equation*}
(a,b) \cap \overline{\Omega}_{0, \tilde{r}'}^{E} \neq \emptyset.
\end{equation*}
Let $x \in \overline{\Omega}_{0, \tilde{r}'}^{E}$ with
\begin{equation}\label{eq221}
a<x<b.
\end{equation}
By definition of $\overline{\Omega}_{0, \tilde{r}'}^{E}$ we can find $(x_n)_{n \in \mathbb{N}} \in \tilde{E}$ such that
\begin{equation*}
x = \lim_{k \to \infty}\frac{x_{n_k}}{r_{n_k}}.
\end{equation*}
Now using \eqref{eq221} we obtain
\begin{equation*}
\lim_{k \to \infty} \frac{a_{n_k}}{r_{n_k}} < \lim_{k \to \infty} \frac{x_{n_k}}{r_{n_k}} < \lim_{k \to \infty} \frac{b_{n_k}}{r_{n_k}}.
\end{equation*}
The last double inequality implies that
\begin{equation}\label{eq222}
x_{n_k} \in (a_{n_k}, b_{n_k})
\end{equation}
holds for all sufficiently large $k$. Since $x_{n_k} \in E$, statement \eqref{eq222} contradicts the condition
\begin{equation*}
(a_n, b_n) \cap E = \emptyset ~\text{for every}~n \in \mathbb{N}.
\end{equation*}
Hence \eqref{eq215} holds for every $\tilde{r}'$.

To prove ($\textit{ii}$), suppose that, on the contrary, there exists $\tilde{r}'$ such that for an interval $(c,d)$ with $0 \leq c < d \leq 1$, we have \eqref{eq216} but $|c-d| > |a-b|$. Without loss of generality we can assume that $(c,d)$ is the largest open subinterval of $(0,1)$ which satisfies condition ($\textit{ii}$). Then we have either
\begin{equation}\label{y1}
c \in \overline{\Omega}^{E}_{0, \tilde{r}'}~ \text{and}~d \in \overline{\Omega}^{E}_{0, \tilde{r}'}
\end{equation}
or
\begin{equation}\label{y2}
c \in \overline{\Omega}^{E}_{0, \tilde{r}'}~\text{and}~d=1,
\end{equation}
because, by Proposition \ref{prop218}, the set $\overline{\Omega}^{E}_{0, \tilde{r}'}$ is closed in $\mathbb{R}^+$. In what follows we assume that \eqref{y1} holds. (The case where \eqref{y2} holds can be considered similarly.)

Let $\varepsilon > 0$ be a number for which
\begin{equation}\label{eq223}
c \leq (1+ \varepsilon) c < (1- \varepsilon) d < d ~\text{and}~ |(1+ \varepsilon)c - (1-\varepsilon)d| > |a-b|=p.
\end{equation}
Since by \eqref{y1} we have $c,d \in \overline{\Omega}_{0, \tilde{r}}^{E}$, there are $(c_n)_{n \in \mathbb{N}} \in \tilde{E}$ and $(d_n)_{n \in \mathbb{N}} \in \tilde{E}$ such that
\begin{equation*}
c = \lim_{k \to \infty} \frac{c_{n_k}}{r_{n_k}}~\text{and}~ d = \lim_{k \to \infty} \frac{d_{n_k}}{r_{n_k}}.
\end{equation*}
Using \eqref{eq223} we obtain the inequality
\begin{equation}\label{eq224}
|(1+ \varepsilon)c_{n_k}- (1- \varepsilon)d_{n_k}| > \lambda(E, r_{n_k})
\end{equation}
for all sufficiently large $k$. Consequently there is a point $x_{n_k}$ such that
\begin{equation*}
x_{n_k} \in \left( (1+ \varepsilon)c_{n_k}, (1- \varepsilon)d_{n_k} \right) \cap E.
\end{equation*}
Passing to a subsequence we can assume that there exists a finite limit
\begin{equation}\label{eq225}
x= \lim_{k \to \infty} \frac{x_{n_k}}{r_{n_k}}.
\end{equation}
From \eqref{eq225} we obtain $x \in [(1+ \varepsilon)c_{n_k}, (1- \varepsilon)d_{n_k}]$. The inclusion
\begin{equation*}
\left[(1+ \varepsilon)c_{n_k}, (1- \varepsilon)d_{n_k}\right] \subseteq (c,d)
\end{equation*}
implies
\begin{equation*}
x \in (c,d).
\end{equation*}
From \eqref{eq225} it follows that $x \in \overline{\Omega}_{0, \tilde{r}'}^{E}$. Hence $x \in (c,d) \cap \overline{\Omega}_{0, \tilde{r}'}^{E}$, contrary to \eqref{eq216}.

Thus if $p$ is a porosity number and \eqref{eq2.20*} holds, then conditions ($\textit{i}$) and ($\textit{ii}$) are satisfied. If there is no strictly increasing $(m_n)_{n \in \mathbb{N}}$ so that \eqref{eq220} hold. Then $b_m = h_m$, and $h_m \notin E$ for all sufficiently large $m$. This case is more simple and can be considered similarly.

Suppose now that $\tilde{r}=(r_n)_{n \in \mathbb{N}}$ is a normalizing sequence such that conditions ($\textit{i}$) and ($\textit{ii}$) take place with an interval $(a,b) \subseteq (0,1)$. We must prove that $p= |a-b|$ is a porosity number of $E$ at $0$. Let us consider a sequence $\left( \frac{\lambda(E, r_n)}{r_n} \right)_{n \in \mathbb{N}}$. This sequence contains a convergent subsequence $\left( \frac{\lambda(E, r_{n_k})}{r_{n_k}} \right)_{k \in \mathbb{N}}$. By definition
\begin{equation*}
p^{*} := \lim_{k \to \infty}\frac{\lambda(E, r_{n_k})}{r_{n_k}}
\end{equation*}
is a porosity number of $E$. It is sufficient to prove that $p^{*}= |a-b|$. Write $\tilde{r}^{*}=(r_{n_k})_{k \in \mathbb{N}}$. It was shown in the first part of the proof that there is a subsequence $\tilde{r}^{**}$ of $\tilde{r}^{*}$ and an interval $(a^{*}, b^{*})$ such that ($\textit{i}$) and ($\textit{ii}$) hold for every subsequence $\tilde{r}'$ of $\tilde{r}^{**}$. In particular we have $p^{*} = |a^*- b^*|$. Using condition ($\textit{ii}$) we obtain the inequalities
\begin{equation*}
|a^*- b^*| \leq |a-b| ~\text{and}~ |a-b| \leq |a^*- b^*|.
\end{equation*}
Hence $p= |a-b|= |a^*- b^*| = p^{*}$. Thus $p= |a-b|$ is a porosity number of $E$ as required.
\end{proof}

\begin{corollary}\label{cor29}
Let $E$ and $T$ be subsets of $\mathbb{R}^+$. If the conditions $E \preceq T$ and $T \preceq E$ hold, then $E$ and $T$ have equal sets of porosity numbers,
\begin{equation}\label{eqkck}
\textbf{P}(E) = \textbf{P}(T).
\end{equation}
\end{corollary}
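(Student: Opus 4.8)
The plan is to deduce the corollary directly from Proposition~\ref{p11} together with the infinitesimal characterization of porosity numbers furnished by Theorem~\ref{th5}. Both of those results are stated under the hypothesis $0 \in E \cap T$, so my first step is a harmless normalization: I would replace $E$ and $T$ by $E \cup \{0\}$ and $T \cup \{0\}$. This alters nothing relevant. Indeed, since $\lambda(E,h)$ is the length of an open subinterval of $(0,h)$ and such an interval never contains the point $0$, we have $\lambda(E \cup \{0\}, h) = \lambda(E,h)$ for every $h > 0$; hence the quotients in \eqref{eq3} are identical for $E$ and for $E \cup \{0\}$, and therefore $\textbf{P}(E) = \textbf{P}(E \cup \{0\})$, and likewise for $T$. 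Moreover the relations in Definition~\ref{dkck} only quantify over points of $E \setminus \{0\}$ and $T \setminus \{0\}$, so $E \preceq T$ and $T \preceq E$ persist after adjoining $0$. Thus I may assume $0 \in E \cap T$ from now on.

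Next I would apply Proposition~\ref{p11}. Under the hypotheses $E \preceq T$, $T \preceq E$ and $0 \in E \cap T$, that proposition gives
\begin{equation*}
\overline{\Omega}^{E}_{0, \tilde{r}} = \overline{\Omega}^{T}_{0, \tilde{r}}
\end{equation*}
for every normalizing sequence $\tilde{r}$. The key observation is that this equality is available not only for $\tilde{r}$ itself but for all of its subsequences: every subsequence $\tilde{r}' = (r_{n_k})_{k \in \mathbb{N}}$ of a normalizing sequence is again a sequence of positive reals tending to $0$, hence again a normalizing sequence, so Proposition~\ref{p11} yields $\overline{\Omega}^{E}_{0, \tilde{r}'} = \overline{\Omega}^{T}_{0, \tilde{r}'}$ as well.

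Finally I would invoke Theorem~\ref{th5}. That theorem describes $p \in \textbf{P}(E)$ purely in terms of the family of sets $\overline{\Omega}^{E}_{0, \tilde{r}'}$ ranging over a normalizing sequence $\tilde{r}$ and its subsequences $\tilde{r}'$: namely, $p$ is a porosity number of $E$ if and only if there exist a normalizing sequence $\tilde{r}$ and an interval $(a,b) \subseteq (0,1)$ with $|a-b| = p$ for which conditions (i) and (ii) hold. Since the previous step shows that $\overline{\Omega}^{E}_{0, \tilde{r}'}$ and $\overline{\Omega}^{T}_{0, \tilde{r}'}$ coincide for every $\tilde{r}$ and every subsequence $\tilde{r}'$, conditions (i) and (ii) are literally the same statements for $E$ as for $T$. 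Hence $p \in \textbf{P}(E)$ if and only if $p \in \textbf{P}(T)$, which is exactly \eqref{eqkck}.

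I expect no serious obstacle: the substance of the argument is carried entirely by Proposition~\ref{p11} and Theorem~\ref{th5}. The only point requiring care is the opening reduction to $0 \in E \cap T$, where one must verify that adjoining the marked point changes neither the porosity numbers nor the order relations of Definition~\ref{dkck}; after that, the conclusion rests on the fact that Theorem~\ref{th5} is formulated so that $\textbf{P}(E)$ is determined by the sets $\overline{\Omega}^{E}_{0, \tilde{r}'}$ alone.
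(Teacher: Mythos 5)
Your proof is correct and follows essentially the same route as the paper: the paper's own proof is just the one-line observation that the claim follows from Theorem \ref{th5} and Proposition \ref{p11} when $0 \in E \cap T$, together with the reduction $\textbf{P}(X) = \textbf{P}(X \cup \{0\})$ in the remaining case. Your write-up fills in the details the paper leaves implicit (that adjoining $0$ preserves the relations of Definition \ref{dkck}, and that Proposition \ref{p11} applies to every subsequence of a normalizing sequence), which is exactly the intended argument.
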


\begin{proof}
It follows directly from Theorem \ref{th5} and Proposition \ref{p11} if $0 \in E$ and $0 \in T$. Otherwise, it suffices to note that $\textbf{P}(X) = \textbf{P}(X \cup \{0\})$ for every $X \subseteq \mathbb{R}^+$.
\end{proof}

\begin{corollary}\label{cor30}
Let $E \subseteq \mathbb{R}^+$ and let $0 \in E$. The set $E$ is strongly porous at $0$ if and only if there is a normalizing sequence $\tilde{r}=(r_n)_{n \in \mathbb{N}}$ such that the equality
\begin{equation*}
(0,1) \cap \overline{\Omega}^{E}_{0, \tilde{r}'}= \emptyset
\end{equation*}
holds for every subsequence $\tilde{r}'$ of the sequence $\tilde{r}$.
\end{corollary}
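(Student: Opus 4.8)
The plan is to derive this directly from Theorem \ref{th5} by specializing to the case $p = 1$. First I would record the elementary bound that makes strong porosity correspond to a specific porosity number. Since $\lambda(E,h)$ is the length of a subinterval of $(0,h)$, we always have $\lambda(E,h) \leq h$, so every value of the quotient $\frac{\lambda(E,h)}{h}$ lies in $[0,1]$ and therefore $\textbf{P}(E) \subseteq [0,1]$. Because $\overline{p}(E) = \max_{p \in \textbf{P}(E)} p$, the condition $\overline{p}(E) = 1$ defining strong porosity is equivalent to the statement $1 \in \textbf{P}(E)$. This reduces the corollary to applying Theorem \ref{th5} with the particular value $p = 1$.

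Next I would invoke Theorem \ref{th5}: $1 \in \textbf{P}(E)$ if and only if there is a normalizing sequence $\tilde{r}$ and an open interval $(a,b) \subseteq (0,1)$ with $|a-b| = 1$ satisfying conditions ($\textit{i}$) and ($\textit{ii}$). The key observation is that the constraints $(a,b) \subseteq (0,1)$ and $|a-b| = 1$ force $a = 0$ and $b = 1$, so the interval is necessarily $(a,b) = (0,1)$. With this interval, condition ($\textit{i}$) reads exactly
\begin{equation*}
(0,1) \cap \overline{\Omega}^{E}_{0, \tilde{r}'} = \emptyset \quad \text{for every subsequence } \tilde{r}',
\end{equation*}
which is precisely the geometric condition appearing in the statement of the corollary.

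Finally I would check that condition ($\textit{ii}$) is automatically fulfilled and therefore imposes nothing. Indeed, any open interval $(c,d) \subseteq (0,1)$ satisfies $|c-d| \leq 1 = |a-b|$, so the maximality requirement of ($\textit{ii}$) holds trivially. Combining the three steps, the existence of a normalizing sequence $\tilde{r}$ making $(0,1) \cap \overline{\Omega}^{E}_{0, \tilde{r}'}$ empty for all subsequences $\tilde{r}'$ is equivalent, via Theorem \ref{th5}, to $1 \in \textbf{P}(E)$, which is equivalent to $\overline{p}(E) = 1$, i.e.\ to $E$ being strongly porous at $0$.

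I do not expect any genuine obstacle here: the corollary is a clean specialization of Theorem \ref{th5}. The only points requiring a moment's care are the normalization bound $\lambda(E,h) \leq h$ (so that strong porosity translates into membership of $1$ in $\textbf{P}(E)$) and the remark that $|a-b| = 1$ together with $(a,b) \subseteq (0,1)$ pins down $(a,b) = (0,1)$ and renders condition ($\textit{ii}$) vacuous.
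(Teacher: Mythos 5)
Your proof is correct and follows essentially the same route as the paper: both reduce strong porosity to the statement $1 \in \textbf{P}(E)$ and then apply Theorem \ref{th5} with $p=1$, noting that $(a,b) \subseteq (0,1)$ with $|a-b|=1$ forces $(a,b)=(0,1)$. Your only addition is the explicit check that condition (\textit{ii}) becomes vacuous for this interval, which the paper leaves implicit.
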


\begin{proof}
Let $(a,b) \subseteq (0,1)$ and $|a-b|=1$. Then it is easy to see that $a=0$ and $b=1$. Note also that $E$ is strongly porous if and only if $1 \in \textbf{P}(E)$. Now it suffices to use Theorem \ref{th5} with $p=1$ and $(a,b)= (0,1)$.
\end{proof}

Using Theorem 4 from \cite{dak} we can also give another description of the strongly porous at $0$ subsets of $\mathbb{R}^+$.

\begin{corollary}\label{cor31}
Let $E \subseteq \mathbb{R}^+$ and let $0 \in E$. The set $E$ is strongly porous at $0$ if and only if there is a normalizing sequence $\tilde{r}$ such that $\Omega^{E}_{0, \tilde{r}'}$ is one-point for every subsequence $\tilde{r}'$ of $\tilde{r}$.
\end{corollary}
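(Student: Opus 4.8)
The plan is to reduce the single-point condition to the condition $\overline{\Omega}^{E}_{0,\tilde{r}'}=\{0\}$ by means of the isometry $L'$ of Proposition \ref{p10}, and then to pair the two implications with Corollary \ref{cor30}. Sufficiency is immediate: if some normalizing sequence $\tilde{r}$ makes $\Omega^{E}_{0,\tilde{r}'}$ one-point for every subsequence $\tilde{r}'$, then by Proposition \ref{p10} each $\overline{\Omega}^{E}_{0,\tilde{r}'}$ is a one-point subset of $\mathbb{R}^+$ containing $0$, hence equals $\{0\}$; in particular $(0,1)\cap\overline{\Omega}^{E}_{0,\tilde{r}'}=\emptyset$ for every $\tilde{r}'$, and Corollary \ref{cor30} yields that $E$ is strongly porous at $0$.

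For necessity I assume $\overline{p}(E)=1$; by Corollary \ref{c3}, together with the easily checked equality $\lambda(E,\cdot)=\lambda(\overline{E},\cdot)$, I may assume $E$ is closed. If $0\notin ac\,E$ the assertion is trivial, since then $\overline{\Omega}^{E}_{0,\tilde{r}'}=\{0\}$ for every normalizing sequence; so suppose $0\in ac\,E$. Choosing $h_m\to 0^{+}$ with $\lambda(E,h_m)/h_m\to 1$ and letting $(a_m,b_m)\subseteq(0,h_m)$ be a largest $E$-free open subinterval, I get $b_m-a_m=\lambda(E,h_m)$ and $b_m\le h_m$, whence $b_m/h_m\to 1$, $a_m/h_m\to 0$, and therefore $a_m/b_m\to 0$: the holes are asymptotically infinitely wide on a multiplicative scale. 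Here $0\in ac\,E$ forces $a_m>0$, since otherwise $(0,b_m)$ would be an $E$-free right neighbourhood of $0$. The decisive step is to place the normalizing sequence \emph{inside} these holes, centred multiplicatively: I would set $r_m=\sqrt{a_m b_m}$, obtaining a normalizing sequence $\tilde{r}=(r_m)$ with $a_m/r_m\to 0$ and $b_m/r_m\to\infty$.

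It then remains to verify that $\overline{\Omega}^{E}_{0,\tilde{r}'}=\{0\}$ for \emph{every} subsequence $\tilde{r}'=(r_{m_k})_{k\in\mathbb{N}}$. Given any $\tilde{x}=(x_m)\in\tilde{E}$ for which $t:=\lim_{k}x_{m_k}/r_{m_k}$ is finite, the inclusion $x_{m_k}\in E$ and the emptiness of $E\cap(a_{m_k},b_{m_k})$ force $x_{m_k}\le a_{m_k}$ or $x_{m_k}\ge b_{m_k}$ for each $k$; the second alternative gives $x_{m_k}/r_{m_k}\ge b_{m_k}/r_{m_k}\to\infty$, incompatible with finiteness of $t$, so for large $k$ the first alternative holds and $0\le t\le\lim_{k}a_{m_k}/r_{m_k}=0$. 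Hence $t=0$, so $\overline{\Omega}^{E}_{0,\tilde{r}'}=\{0\}$ and, by Proposition \ref{p10}, $\Omega^{E}_{0,\tilde{r}'}$ is one-point. The main obstacle is precisely this ``for every subsequence'' clause in the necessity direction: the single-point property of a pretangent space is not inherited by subsequences in general, so one cannot merely invoke a full-sequence statement (such as Theorem 4 of \cite{dak}) and restrict it. What makes the argument succeed is that the two conditions $a_m/r_m\to 0$ and $b_m/r_m\to\infty$ are invariant both under rescaling and under passing to subsequences; it is this multiplicative centring of $\tilde{r}$ in the holes, rather than any choice meeting only the full-sequence requirement, that secures the conclusion for all $\tilde{r}'$ simultaneously.
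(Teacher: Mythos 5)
Your proof is correct, but it takes a genuinely different route from the paper: the paper gives no self-contained argument for this corollary at all, deriving it instead by citing Theorem 4 of \cite{dak}, an external characterization (valid for general pointed metric spaces) of when all pretangent spaces w.r.t.\ subsequences are single-point. Your sufficiency direction---one-pointness of $\Omega^{E}_{0,\tilde{r}'}$ together with $L'(\alpha_0')=0$ forces $\overline{\Omega}^{E}_{0,\tilde{r}'}=\{0\}$, then Corollary \ref{cor30}---is a routine use of the paper's own tools. The necessity direction is where you add real content: you correctly identify that Corollary \ref{cor30} cannot simply be restricted, since it only excludes $(0,1)$ and leaves room for points of $[1,\infty)$ in $\overline{\Omega}^{E}_{0,\tilde{r}'}$ (indeed, the construction in Theorem \ref{th5} with $r_n=h_{m_n}$ typically produces the two-point space $\{0,1\}$ when $b_{m_n}\in E$), and you repair this by centering the normalizing sequence multiplicatively in the holes, $r_m=\sqrt{a_m b_m}$, so that $a_m/r_m\to 0$ and $b_m/r_m\to\infty$. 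These two relations survive passage to arbitrary subsequences, which is precisely what annihilates every nonzero element of $\overline{\Omega}^{E}_{0,\tilde{r}'}$ uniformly over $\tilde{r}'$. The supporting computations ($a_m/h_m\to 0$, $b_m/h_m\to 1$, hence $a_m/b_m\to 0$; the dichotomy $x_{m_k}\le a_{m_k}$ or $x_{m_k}\ge b_{m_k}$) are all sound, and the reduction to closed $E$ via Corollary \ref{c3} is harmless though not actually needed, since your argument never uses $a_m\in E$. What your approach buys is an elementary proof entirely inside the paper's framework; what the paper's citation buys is brevity and a statement that holds for arbitrary metric spaces rather than only for subsets of $\mathbb{R}^+$.
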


\section{The Upper Porosity At Infinity}

Let $\mu :\mathbb{N} \to \mathbb{R}^+$ be a strictly decreasing function such that $\lim_{n \to \infty} \mu (n) =0$ and let $E \subseteq \mathbb{N}$. Let us define the numbers $\overline{p_{\mu}}(E)$ and $\underline{p_{\mu}}(E)$ as
\begin{equation}\label{pm}
\overline{p}_{\mu} (E) =\limsup_{n \to \infty} \frac{\lambda_{\mu} (E,n)}{\mu (n)}
\end{equation}
and
\begin{equation}\label{eq5}%{rightlowermu}
\underline{p}_{\mu}(E)=\liminf_{n \to \infty}  \frac{\lambda_{\mu}(E,n)}{\mu(n)}
\end{equation}
where
\begin{equation}\label{eq6}%{lambda}
\lambda_{\mu} (E,n) = \sup \{ |\mu (n^{(1)})- \mu (n^{(2)})|:n \leq n^{(1)} < n^{(2)}, ~(n^{(1)}, n^{(2)}) \cap E = \emptyset \}.
\end{equation}
We will say that $\mu$ is a scaling function and that $\overline{p}_{\mu}(E)$ and $\underline{p}_{\mu}(E)$ are the upper porosity of $E$ at infinity and, respectively, the lower porosity of $E$ at infinity.

\begin{remark}
If $E$ is an infinite subset of $\mathbb{N}$, $|E|= \infty$, then, for every $n \in \mathbb{N}$, $\lambda_{\mu}(E,n)$ is the length of the largest open subinterval of $(0, \mu(n))$ that contains no point of $\mu(E)$ and has a form $(\mu(n^{(2)}), \mu(n^{(1)}))$ with $n^{(1)} < n^{(2)}$. For the case of finite $E$ we evidently have $\lambda_{\mu}(E,n) = \mu(n)$ for all sufficiently large $n$. Consequently the equalities
\begin{equation*}
\underline{p}_{\mu}(E) = \overline{p}_{\mu}(E) = 1
\end{equation*}
hold with every scaling function $\mu$ for all $E \subseteq \mathbb{N}$ with $|E|< \infty$.
\end{remark}

For given $\mu$ we define $E \subseteq \mathbb{N}$ to be: porous at infinity if $\overline{p}_{\mu}(E) > 0$, strongly porous at infinity if $\overline{p}_{\mu}(E)=1$ and, respectively, nonporous at infinity if $\overline{p}_{\mu}(E) = 0$.

\begin{definition}\label{d3}%{plimm}
Let $\mu:\mathbb{N} \to \mathbb{R}^+$ be a scaling function and let $E \subseteq \mathbb{N}$. A number $p$ is a porosity number of $E$ at infinity if there is a strictly increasing sequence $(n_k)_{k \in \mathbb{N}}$ such that
\begin{equation}\label{eq7}%{plim}
p= \lim_{k \to \infty} \frac{\lambda_{\mu}(E, n_k)}{\mu(n_k)}
\end{equation}
where $\lambda_{\mu}(E, n_k)$ is defined by \eqref{eq6}.
\end{definition}

For $E \subseteq \mathbb{N}$ and a scaling function $\mu$ we shall write $\textbf{P}_{\mu}(E)$ for the set of all porosity numbers of $E$ at infinity. It is clear that $\textbf{P}_{\mu}(E)$ is a closed subset of $[0,1]$ and
\begin{equation*}
\overline{p}_{\mu}(E) = \max_{p \in \textbf{P}_{\mu}(E)}p~~~\text{and}~~~\underline{p}_{\mu}(E) = \min_{p \in \textbf{P}_{\mu}(E)}p.
\end{equation*}

The following lemma often allows us to compute the upper porosity at infinity for subsets of $\mathbb{N}$ which are defined by recurrence relations.

\begin{lemma}\label{l1}%l1
Let
\begin{equation*}
E = \{ n_1, n_2, ..., n_k, n_{k+1},... \} \subseteq \mathbb{N}
\end{equation*}
where $n_k < n_{k+1}$ for every $k \in \mathbb{N}$. Then for each scaling function $\mu$ the upper porosity $\overline{p}_{\mu}(E)$ satisfies the equality
\begin{equation}\label{eq17}%{pliminf}
\overline{p}_{\mu}(E) = 1- \liminf_{k \to \infty} \frac{\mu(n_k)}{\mu(n_{k-1})}.
\end{equation}
\end{lemma}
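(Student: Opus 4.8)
The plan is to establish the two inequalities $\overline{p}_{\mu}(E) \ge 1 - \ell$ and $\overline{p}_{\mu}(E) \le 1 - \ell$, where I abbreviate $\ell := \liminf_{k \to \infty} \frac{\mu(n_k)}{\mu(n_{k-1})}$. Throughout I would exploit that $\mu$ is strictly decreasing, so that $\mu(n^{(1)}) - \mu(n^{(2)}) > 0$ precisely when $n^{(1)} < n^{(2)}$, together with the interpretation from the Remark that $\lambda_{\mu}(E,n)$ measures the largest gap of $\mu(E)$ contained in $(0, \mu(n))$. Since we take a $\limsup$ as $n \to \infty$, it is harmless to restrict attention to $n \ge n_1$.

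For the lower bound I would test the defining supremum \eqref{eq6} on a single well-chosen admissible pair. For each $k \ge 2$, the pair $n^{(1)} = n_{k-1}$, $n^{(2)} = n_k$ is admissible because $n_{k-1}, n_k$ are consecutive elements of $E$, so $(n_{k-1}, n_k) \cap E = \emptyset$; evaluating \eqref{eq6} at $n = n_{k-1}$ then gives $\lambda_{\mu}(E, n_{k-1}) \ge \mu(n_{k-1}) - \mu(n_k)$ and hence $\frac{\lambda_{\mu}(E, n_{k-1})}{\mu(n_{k-1})} \ge 1 - \frac{\mu(n_k)}{\mu(n_{k-1})}$. As $n_{k-1} \to \infty$, the full $\limsup$ dominates the $\limsup$ along this subsequence of indices, so $\overline{p}_{\mu}(E) \ge \limsup_{k \to \infty}\bigl(1 - \frac{\mu(n_k)}{\mu(n_{k-1})}\bigr) = 1 - \ell$.

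The upper bound is the more delicate half. Fix $n \ge n_1$ and an arbitrary admissible pair $(n^{(1)}, n^{(2)})$ from \eqref{eq6}, and let $n_j$ be the largest element of $E$ with $n_j \le n^{(1)}$. The emptiness of $(n^{(1)}, n^{(2)}) \cap E$ forces $n^{(2)} \le n_{j+1}$, whence $\mu(n^{(1)}) - \mu(n^{(2)}) \le \min(\mu(n), \mu(n_j)) - \mu(n_{j+1})$. A short case distinction according to whether $n \le n_j$ or $n_j < n \le n^{(1)} < n_{j+1}$ then shows in both situations that $\frac{\mu(n^{(1)}) - \mu(n^{(2)})}{\mu(n)} \le 1 - \frac{\mu(n_{j+1})}{\mu(n_j)}$, the relevant index always obeying $n_{j+1} > n$. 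Taking the supremum over admissible pairs yields $\frac{\lambda_{\mu}(E,n)}{\mu(n)} \le 1 - \inf\{ \frac{\mu(n_{j+1})}{\mu(n_j)} : n_{j+1} > n \}$.

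To finish I would let $n \to \infty$. The index set $\{ j : n_{j+1} > n \}$ shrinks to the tails $\{ j \ge J(n) \}$ with $J(n) \to \infty$, so the inner infimum is non-decreasing in $n$ and converges to $\liminf_{j \to \infty} \frac{\mu(n_{j+1})}{\mu(n_j)} = \ell$; therefore $\overline{p}_{\mu}(E) = \limsup_{n \to \infty} \frac{\lambda_{\mu}(E,n)}{\mu(n)} \le 1 - \ell$, which together with the previous paragraph proves \eqref{eq17}. I expect the main obstacle to lie in this upper bound: one must confirm that no admissible pair can outperform a single consecutive-element gap (the case where $n$ falls strictly between two consecutive elements of $E$ requires the separate estimate above), and then justify interchanging the outer $\limsup$ over all $n$ with the $\liminf$ over $k$, which rests precisely on the monotonicity of the inner infimum in $n$.
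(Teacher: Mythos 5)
Your proof is correct and follows essentially the same route as the paper's: the lower bound by testing the supremum in \eqref{eq6} on pairs of consecutive elements of $E$, and the upper bound by showing every admissible pair is dominated by a consecutive-element ratio $1-\mu(n_{j+1})/\mu(n_j)$ with $n_{j+1}>n$, followed by the $\limsup$/$\liminf$ interchange. If anything, your explicit case distinction for $n_j < n \le n^{(1)} < n_{j+1}$ is slightly more careful than the paper, which asserts that the extremal pair always has both endpoints in $E$ (this can fail when $n$ lies strictly inside a gap of $E$, though the bound survives by exactly the comparison $\mu(n)\le\mu(n_j)$ that you supply).
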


\begin{proof}
Let $\mu :\mathbb{N} \to \mathbb{R}^+$ be a scaling function. The right-hand side of \eqref{eq17} is less than or equal to $\overline{p}_{\mu}(E)$. Indeed,
\begin{eqnarray*}
1- \liminf_{k \to \infty} \frac{\mu(n_k)}{\mu(n_{k-1})} &=& \limsup_{k \to \infty} \frac{\mu(n_{k-1})- \mu(n_k)}{\mu(n_{k-1})} \\
& \leq & \limsup_{k \to \infty} \frac{\lambda_{\mu}(E, n_k)}{\mu(n_k)} \leq \limsup_{n \to \infty} \frac{\lambda_{\mu}(E, n)}{\mu(n)} = \overline{p}_{\mu}(E).
\end{eqnarray*}
Hence to prove \eqref{eq17} it suffices to show that
\begin{equation}\label{eq18}%{pliminfk}
\overline{p}_{\mu}(E) \leq 1- \liminf_{k \to \infty} \frac{\mu(n_k)}{\mu(n_{k-1})}.
\end{equation}
For every $n \in \mathbb{N}$ let $n^{(1)}$ and $n^{(2)}$ be positive integer numbers such that $n \leq n^{(1)} < n^{(2)}$ and
\begin{equation*}
\lambda_{\mu}(E, n)= \mu(n^{(1)})- \mu(n^{(2)}).
\end{equation*}
Since $\mu$ is decreasing, the inequality
\begin{equation*}
\frac{\lambda_{\mu}(E,n)}{\mu(n)} \leq \frac{\lambda_{\mu}(E,n)}{\mu(n^{(1)})}
\end{equation*}
holds. Moreover it is easy to see that
\begin{equation*}
\lambda_{\mu}(E,n) = \lambda_{\mu}(E,n^{(1)}).
\end{equation*}
Consequently we have
\begin{equation}\label{eq19}%{lambdaesit}
\frac{\lambda_{\mu}(E,n)}{\mu(n)} \leq \frac{\lambda_{\mu}(E,n^{(1)})}{\mu(n^{(1)})} = 1- \frac{\mu(n^{(2)})}{\mu(n^{(1)})}
\end{equation}
for every $n \in \mathbb{N}$. Since for every $n \in \mathbb{N}$ there is $k \in \mathbb{N}$ such $n^{(2)}= n_{k+1}$ and $n^{(1)}=n_k$, where $n_{k+1}, n_k \in E$, it follows from \eqref{eq19} that
\begin{equation*}
\limsup_{n \to \infty} \frac{\lambda_{\mu}(E,n)}{\mu(n)} \leq \limsup_{k \to \infty} \left( 1- \frac{\mu(n_{k+1})}{\mu(n_k)} \right),
\end{equation*}
which implies \eqref{eq18}.
\end{proof}

The following theorem can be used as a base for translation of some results related to the classical right upper porosity at $0$ into the language of the upper porosity at infinity.

\begin{theorem}\label{th2}%{th2}
Let $E \subseteq \mathbb{N}$. Then the equality
\begin{equation}\label{eq23}%{eq23}
\overline{p}(\mu(E)) = \overline{p}_{\mu}(E)
\end{equation}
holds for every scaling function $\mu :\mathbb{N} \to \mathbb{R}^+$.
\end{theorem}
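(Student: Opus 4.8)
The plan is to reduce the whole statement to a single exact identity between the discrete gap function $\lambda_{\mu}(E,\cdot)$ of \eqref{eq6} and the continuous gap function $\lambda(\mu(E),\cdot)$ of Definition \ref{d1} sampled at the points $\mu(n)$, and then to control the full $\limsup$ over $h\to 0^{+}$ by these sampled values. First I would dispose of the case where $E$ is finite: then $\mu(E)$ is a finite subset of $\mathbb{R}^{+}$ bounded away from $0$, so $\lambda(\mu(E),h)=h$ for all small $h$ and $\overline{p}(\mu(E))=1$, matching $\overline{p}_{\mu}(E)=1$ from the Remark. So assume $E=\{n_{1}<n_{2}<\cdots\}$ is infinite and write $f_{k}:=\mu(n_{k})$, so that $\mu(E)=\{f_{1}>f_{2}>\cdots\}$ decreases to $0$.

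The core step is the identity
\begin{equation*}
\lambda(\mu(E),\mu(n))=\lambda_{\mu}(E,n)\qquad\text{for every }n\in\mathbb{N}.
\end{equation*}
I would prove it by exhibiting a length-preserving correspondence between the $\mu(E)$-free open subintervals of $(0,\mu(n))$ and the admissible pairs in \eqref{eq6}. Since $\mu$ is strictly decreasing, a pair $n\le n^{(1)}<n^{(2)}$ with $(n^{(1)},n^{(2)})\cap E=\emptyset$ yields the open interval $(\mu(n^{(2)}),\mu(n^{(1)}))$, which meets no point of $\mu(E)$ (no $E$-point lies strictly between $n^{(1)}$ and $n^{(2)}$) and lies in $(0,\mu(n))$ (because $\mu(n^{(1)})\le\mu(n)$); its length is exactly $\mu(n^{(1)})-\mu(n^{(2)})$. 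Conversely, because $\mu(E)$ accumulates only at $0$, every maximal $\mu(E)$-free subinterval of $(0,\mu(n))$ has the form $(\mu(n^{(2)}),\mu(n^{(1)}))$ with $n\le n^{(1)}<n^{(2)}$ and $(n^{(1)},n^{(2)})\cap E=\emptyset$ (the topmost gap uses $n^{(1)}=n$), hence comes from an admissible pair. Taking suprema on both sides gives the identity.

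The inequality $\overline{p}_{\mu}(E)\le\overline{p}(\mu(E))$ then follows at once, since $(\mu(n))_{n}$ is one particular way of letting $h\to 0^{+}$:
\begin{equation*}
\overline{p}_{\mu}(E)=\limsup_{n\to\infty}\frac{\lambda_{\mu}(E,n)}{\mu(n)}=\limsup_{n\to\infty}\frac{\lambda(\mu(E),\mu(n))}{\mu(n)}\le\limsup_{h\to0^{+}}\frac{\lambda(\mu(E),h)}{h}=\overline{p}(\mu(E)).
\end{equation*}

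The reverse inequality is the main obstacle, because $h\to 0^{+}$ ranges over a continuum and dividing by a small $h$ could in principle inflate the ratio beyond what the samples see; the real danger is a deep interior gap of $\mu(E)$ contributing to $\lambda(\mu(E),h)$ while being charged against a large denominator $h$. To handle it I would fix a small $h$, choose $k$ with $f_{k+1}<h\le f_{k}$, and decompose $\lambda(\mu(E),h)$ into the topmost gap of length $h-f_{k+1}$ and the interior gaps of lengths $f_{j}-f_{j+1}$ for $j\ge k+1$. For the top term, $h\le f_{k}$ gives $\frac{h-f_{k+1}}{h}\le\frac{f_{k}-f_{k+1}}{f_{k}}\le\frac{\lambda_{\mu}(E,n_{k})}{\mu(n_{k})}$. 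For each interior term, $j\ge k+1$ forces $f_{j}\le f_{k+1}<h$, hence $\frac{f_{j}-f_{j+1}}{h}\le\frac{f_{j}-f_{j+1}}{f_{j}}\le\frac{\lambda_{\mu}(E,n_{j})}{\mu(n_{j})}$, where the last step uses the identity (the quantity $f_{j}-f_{j+1}$ is the top gap at $\mu(n_{j})$). Taking the maximum gives $\frac{\lambda(\mu(E),h)}{h}\le\sup_{m\ge n_{k}}\frac{\lambda_{\mu}(E,m)}{\mu(m)}$, and letting $h\to0^{+}$ (so $k\to\infty$ and $n_{k}\to\infty$) yields $\overline{p}(\mu(E))\le\overline{p}_{\mu}(E)$. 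Combining the two inequalities proves \eqref{eq23}. As a consistency check, the same estimates show that both sides equal $1-\liminf_{k\to\infty}\mu(n_{k+1})/\mu(n_{k})$, the value delivered for $\overline{p}_{\mu}(E)$ by Lemma \ref{l1}.
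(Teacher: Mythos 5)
Your proof is correct, and while it rests on the same two elementary ingredients as the paper's argument --- identifying the $\mu(E)$-free intervals in $(0,h)$ with integer pairs via the strict monotonicity of $\mu$, and the monotonicity of $t \mapsto (t-c)/t$ to handle the gap truncated by $h$ --- it is organized along a genuinely different decomposition. The paper fixes $h$, takes the \emph{single maximal} $\mu(E)$-free interval $(x,y) \subseteq (0,h)$, and uses maximality twice to pin down its endpoints exactly (first $x = \mu(n_k)$, then $y = \mu(n_{k-1})$ or $y = h$ according to whether $\mu(n_{k-1}) \leq h$), arriving at the pointwise bound $\frac{\lambda(\mu(E),h)}{h} \leq \frac{\mu(n_{k-1})-\mu(n_k)}{\mu(n_{k-1})}$; it then needs Lemma \ref{l1} to recognize the $\limsup$ of the right-hand side as $\overline{p}_{\mu}(E)$. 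You instead bound \emph{every} component of $(0,h)\setminus\mu(E)$ --- the truncated top gap and all interior gaps --- against sampled ratios $\lambda_{\mu}(E,m)/\mu(m)$ with $m \geq n_{k}$, which eliminates both the need to locate the extremal gap (and hence the paper's case analysis and maximality arguments) and the appeal to Lemma \ref{l1}: you close directly from the definition of $\overline{p}_{\mu}(E)$. Your explicit sampling identity $\lambda(\mu(E),\mu(n)) = \lambda_{\mu}(E,n)$ is also slightly stronger than what the paper records (it dismisses the easy inequality as immediate from the definitions), and it is of independent use. What the paper's route buys is the sharper pointwise estimate by a single consecutive-gap ratio $\frac{\mu(n_{k-1})-\mu(n_k)}{\mu(n_{k-1})}$, the quantity that Lemma \ref{l1} and Proposition \ref{th3} are built around; what yours buys is a self-contained proof with no case distinctions and no dependence on Lemma \ref{l1}.
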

\begin{proof}
Equality \eqref{eq23} is trivial if $|E|< \infty$. Suppose that $E$ is infinite,
\begin{equation*}
 E = \{ n_1,..., n_k, n_{k+1},... \}
\end{equation*}
where $n_k < n_{k+1}$ for every $k \in \mathbb{N}$. Note that the inequality $\overline{p}(\mu(E)) \geq \overline{p}_{\mu}(E)$ follows immediately from the definitions. In order to prove the converse inequality
\begin{equation*}\label{eq24}%{pkucukpmu}
\overline{p}(\mu(E)) \leq \overline{p}_{\mu}(E),
\end{equation*}
it is sufficient to show that for every $h \in (0, \mu(n_1))$ there is $n_k = n_k(h) \in E$ satisfying the inequality
\begin{equation}\label{eq25}%{eq25}
\frac{\lambda(\mu(E), h)}{h} \leq \frac{\mu(n_{k-1})- \mu(n_k)}{\mu(n_{k-1})}.
\end{equation}
Indeed, using Lemma \ref{l1} and \eqref{eq25} we obtain
\begin{eqnarray*}
\overline{p}(\mu(E)) &=& \limsup_{h \to \infty} \frac{\lambda(\mu(E), h)}{h} \\
& \leq & \limsup_{k \to \infty} \frac{\mu(n_{k-1})- \mu(n_k)}{\mu(n_{k-1})}= \overline{p}_{\mu}(E).
\end{eqnarray*}
We now turn to the proof of inequality \eqref{eq25}. This is trivial if $\lambda(\mu(E),h)=0$. Hence we may suppose that $\lambda(\mu(E),h)>0$. Let $h \in (0, \mu(n_1))$ and let $x,y$ be positive numbers such that $0 < x < y \leq h$ and
\begin{equation}\label{eq26}%{teoyildiz1}
y-x = \lambda (\mu(E), h)
\end{equation}
and
\begin{equation*}
(x,y) \cap \mu(E) = \emptyset.
\end{equation*}
Let us define $k = k(h) \in \mathbb{N}$ by the rule
\begin{equation*}
k = \min \{ j \in \mathbb{N} :\mu(n_j) \leq x, n_j \in E \}.
\end{equation*}
It follows directly from the definition of $k=k(h)$ that
\begin{equation*}
\mu(n_k) \leq x < \mu(n_{k-1}).
\end{equation*}
If $\mu(n_k) < x$, then we have
\begin{equation*}
(\mu(n_k), y) = (\mu(n_k), x] \cup (x,y) \subseteq (\mu(n_k), \mu(n_{k-1})) \cup (x,y).
\end{equation*}
 Consequently
\begin{equation*}
(\mu(n_k), y) \cap \mu(E) = \left( (\mu(n_k), \mu(n_{k-1})) \cap \mu(E) \right) \cup \left( (x,y) \cap \mu(E) \right)  = \emptyset
\end{equation*}
and
\begin{equation*}
|y- \mu(n_k)| = |x- \mu(n_k)| + |x-y|
\end{equation*}
hold. The last equality and \eqref{eq26} imply
\begin{equation*}
|y- \mu(n_k)| > \lambda(\mu(E), h),
\end{equation*}
contrary to the definition of $\lambda(\mu(E), h)$. Hence
\begin{equation*}
x= \mu(n_k)
\end{equation*}
 holds. Now the equalities
\begin{equation*}
(\mu(n_k), \mu(n_{k-1})) \cap \mu(E) = \emptyset
\end{equation*}
and
\begin{equation*}
(x,y) \cap \mu(E)=(\mu(n_k), y) \cap \mu(E) = \emptyset
\end{equation*}
imply that $y \leq \mu(n_{k-1})$. If $\mu(n_{k-1}) \leq h$, then reasoning as in the proof of the equality $x = \mu(n_k)$ we can show that
\begin{equation*}
\mu(n_{k-1})=y.
\end{equation*}
Thus in this case we have $\lambda(\mu(E), h)= \mu(n_{k-1})- \mu(n_k)$ and $\mu(n_{k-1}) \leq h$, that yields \eqref{eq25}.

In the case of $h < \mu(n_{k-1})$ we can simply obtain the equality $y=h$. Indeed if $y < h$, then
\begin{equation*}
|x-y| = |\mu(n_k)- y| < |\mu(n_k)- \mu(n_{k-1})|.
\end{equation*}
It contradicts the equality $\lambda(\mu(E),h)= |x-y|$. Consequently in the case of $h < \mu(n_{k-1})$ we have
\begin{equation*}
\frac{\lambda(\mu(E), h)}{h}= \frac{y-x}{h} = \frac{h - \mu(n_k)}{h}.
\end{equation*}
Since the function
\begin{equation*}
f(t)= \frac{t- \mu(n_k)}{t}
\end{equation*}
is increasing on $(\mu(n_k), \infty)$, the inequality $h < \mu(n_{k-1})$ implies
\begin{equation*}
\frac{h- \mu(n_k)}{h} \leq \frac{\mu(n_{k-1})- \mu(n_k)}{\mu(n_{k-1})},
\end{equation*}
that is equivalent to \eqref{eq25}.
\end{proof}
The next result describes some sufficient and necessary conditions under which $\mathbb{N}$ is nonporous or porous or strongly porous at infinity.

\begin{proposition}\label{th3}%{th3}
Let $E$ be an infinite subset of $\mathbb{N}$,
\begin{equation*}
E= \{ n_1,..., n_k, n_{k+1},... \}
\end{equation*}
where $n_k < n_{k+1}$ for every $k \in \mathbb{N}$ and let $\mu:\mathbb{N} \to \mathbb{R}^+$ be a scaling function. Then the following statements hold.

(\textit{i}) The set $E$ is nonporous at infinity w.r.t. $\mu$ if and only if
\begin{equation}\label{e1}
\lim_{k \to \infty} \frac{\mu(n_{k+1})}{\mu(n_k)} = 1.
\end{equation}

(\textit{ii}) The set $E$ is porous at infinity w.r.t. $\mu$ if and only if
\begin{equation*}
\limsup_{k \to \infty} \frac{\mu(n_{k+1})}{\mu(n_k)} < 1.
\end{equation*}

(\textit{iii}) The set $E$ is strongly porous at infinity w.r.t. $\mu$ if and only if
\begin{equation*}
\liminf_{k \to \infty} \frac{\mu(n_{k+1})}{\mu(n_k)} =0.
\end{equation*}
\end{proposition}

\begin{proof}
Let prove (\textit{i}). Suppose that \eqref{e1} holds. Then, by Lemma \ref{l1}, we obtain
\begin{equation*}
\overline{p}_{\mu}(E) = 1- \liminf_{k \to \infty} \frac{\mu(n_{k+1})}{\mu(n_k)}= 1- \lim_{k \to \infty}\frac{\mu(n_{k+1})}{\mu(n_k)}=0.
\end{equation*}
Thus $E$ is nonporous at infinity w.r.t. $\mu$. Conversely, assume $\overline{p}_{\mu}(E)=0$. Then using Lemma \ref{l1} again we find that
\begin{equation*}
1= \liminf_{k \to \infty} \frac{\mu(n_{k+1})}{\mu(n_k)}.
\end{equation*}
Since $\mu$ is decreasing the inequality
\begin{equation*}
\limsup_{k \to \infty} \frac{\mu(n_{k+1})}{\mu(n_k)} \leq 1
\end{equation*}
holds. Thus
\begin{equation*}
1= \liminf_{k \to \infty} \frac{\mu(n_{k+1})}{\mu(n_k)} \leq \limsup_{k \to \infty } \frac{\mu(n_{k+1})}{\mu(n_k)} \leq 1,
\end{equation*}
that implies \eqref{e1}. Statement (\textit{i}) follows.

Statement (\textit{ii}) and (\textit{iii}) can be proved similarly.
\end{proof}

Using the pretangent spaces we can give a simple geometric characterization of subsets of $\mathbb{N}$ which are nonporous at infinity.

\begin{theorem}\label{teo211}
Let $E$ be a subset of $\mathbb{N}$, let $\mu:\mathbb{N} \to \mathbb{R}^+$ be a scaling function and let $E_{\mu}=\mu(E) \cup \{0\}$. Then the following statements are equivalent.

(\textit{i}) The set $E$ is nonporous at infinity w.r.t. $\mu$,
\begin{equation}\label{e2}
\overline{p}_{\mu}(E)=0.
\end{equation}

(\textit{ii}) The equality
\begin{equation}\label{e3}
\overline{\Omega}^{E_{\mu}}_{0, \tilde{r}}= \mathbb{R}^+
\end{equation}
holds for every normalizing sequence $\tilde{r}$.

(\textit{iii}) For every normalizing sequence $\tilde{r}$ there is a subsequence $\tilde{r}'$ such that the pretangent space $\overline{\Omega}^{E_{\mu}}_{0, \tilde{r}'}$ includes a dense subset of $(0,1)$.
\end{theorem}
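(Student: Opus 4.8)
The plan is to establish the cycle of implications (\textit{i}) $\Rightarrow$ (\textit{ii}) $\Rightarrow$ (\textit{iii}) $\Rightarrow$ (\textit{i}). Before starting I would dispose of the trivial case $|E| < \infty$: here $\overline{p}_{\mu}(E) = 1$ while $E_{\mu}$ is finite, so $0 \notin ac\, E_{\mu}$ and $\overline{\Omega}^{E_{\mu}}_{0, \tilde{r}} = \{0\}$ for every $\tilde{r}$; consequently all three statements are false and the equivalence holds vacuously. From now on I assume $E = \{n_1 < n_2 < \cdots\}$ is infinite, so that $0 \in ac\, E_{\mu}$.

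For (\textit{i}) $\Rightarrow$ (\textit{ii}), I would first rewrite the hypothesis using Proposition \ref{th3}(\textit{i}): the condition $\overline{p}_{\mu}(E) = 0$ is equivalent to $\lim_{k \to \infty} \mu(n_k)/\mu(n_{k-1}) = 1$. Fix a normalizing sequence $\tilde{r} = (r_n)_{n \in \mathbb{N}}$ and a number $t > 0$. Since the inclusion $\overline{\Omega}^{E_{\mu}}_{0, \tilde{r}} \subseteq \mathbb{R}^+$ and the membership $0 \in \overline{\Omega}^{E_{\mu}}_{0, \tilde{r}}$ are automatic, it suffices to realize $t$ by some sequence from $E_{\mu}$. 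For all $n$ large enough that $t r_n < \mu(n_1)$, let $m_n$ be the index determined by
\begin{equation*}
\mu(n_{m_n}) \leq t r_n < \mu(n_{m_n - 1}).
\end{equation*}
Dividing by $t r_n$ yields
\begin{equation*}
\frac{\mu(n_{m_n})}{\mu(n_{m_n - 1})} < \frac{\mu(n_{m_n})}{t r_n} \leq 1,
\end{equation*}
and because $t r_n \to 0$ forces $m_n \to \infty$, the ratio condition squeezes $\mu(n_{m_n})/(t r_n) \to 1$. Hence $x_n := \mu(n_{m_n}) \in \mu(E) \subseteq E_{\mu}$ satisfies $x_n \to 0$ and $x_n / r_n \to t$, so $t \in \overline{\Omega}^{E_{\mu}}_{0, \tilde{r}}$ by the definition of $\overline{\Omega}^{E_{\mu}}_{0, \tilde{r}}$. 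As $t > 0$ is arbitrary, $\overline{\Omega}^{E_{\mu}}_{0, \tilde{r}} = \mathbb{R}^+$.

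The implication (\textit{ii}) $\Rightarrow$ (\textit{iii}) is immediate: given $\tilde{r}$, take the trivial subsequence $\tilde{r}' = \tilde{r}$; then (\textit{ii}) gives $\overline{\Omega}^{E_{\mu}}_{0, \tilde{r}'} = \mathbb{R}^+ \supseteq (0,1)$, which contains a dense subset of $(0,1)$. For (\textit{iii}) $\Rightarrow$ (\textit{i}) I would argue by contraposition. If $\overline{p}_{\mu}(E) > 0$, then by Theorem \ref{th2} together with the identity $\textbf{P}(X) = \textbf{P}(X \cup \{0\})$ the value $p := \overline{p}_{\mu}(E) = \overline{p}(E_{\mu})$ is a positive porosity number of $E_{\mu}$ at $0$. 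Theorem \ref{th5} then supplies a normalizing sequence $\tilde{r}$ and a nonempty open interval $(a,b) \subseteq (0,1)$ with $(a,b) \cap \overline{\Omega}^{E_{\mu}}_{0, \tilde{r}'} = \emptyset$ for every subsequence $\tilde{r}'$. Since $\overline{\Omega}^{E_{\mu}}_{0, \tilde{r}'}$ then misses the whole open subinterval $(a,b)$ of $(0,1)$, it cannot be dense in $(0,1)$, so (\textit{iii}) fails for this $\tilde{r}$.

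I expect the main obstacle to be the squeeze argument in (\textit{i}) $\Rightarrow$ (\textit{ii}): one must verify carefully that choosing, for each $n$, the largest element of $\mu(E)$ not exceeding $t r_n$ produces a rescaled limit equal to $t$, which rests on the fact that the consecutive ratio $\mu(n_{m_n})/\mu(n_{m_n - 1})$ tends to $1$ because the selected index $m_n$ tends to infinity as $t r_n \to 0$. The other two implications are short, relying respectively on the triviality $\mathbb{R}^+ \supseteq (0,1)$ and on a direct invocation of Theorem \ref{th5}.
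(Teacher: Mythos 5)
Your proof is correct and takes essentially the same route as the paper: Proposition \ref{th3} plus the consecutive-ratio squeeze for (\textit{i})$\Rightarrow$(\textit{ii}), the trivial subsequence for (\textit{ii})$\Rightarrow$(\textit{iii}), and Theorem \ref{th2} combined with Theorem \ref{th5} (stated contrapositively) for (\textit{iii})$\Rightarrow$(\textit{i}). The only cosmetic differences are that you realize each $t>0$ directly by the sequence $\mu(n_{m_n})$ squeezed against $t r_n$, whereas the paper first establishes $\mathbb{R}^+ \preceq \mu(E) \preceq E_{\mu}$ and invokes Proposition \ref{p11}, and that you dispose of the finite-$E$ case explicitly while the paper leaves it implicit.
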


\begin{proof}
Let $E$ be nonporous at infinity. Then $E$ is infinite
\begin{equation*}
E= \{ n_1,..., n_k, n_{k+1},... \}
\end{equation*}
where $n_k < n_{k+1}$ for every $k \in \mathbb{N}$. Proposition \ref{th3} implies that
\begin{equation}\label{e4}
\lim_{k \to \infty} \frac{\mu(n_{k+1})}{\mu(n_k)}=1.
\end{equation}
Let $\tilde{h}=\{ h_m \}_{m \in \mathbb{N}}$ be an arbitrary sequence of positive real numbers such that $\lim_{m \to \infty} h_m=0$. For every $m \in \mathbb{N}$ define the number $k(m)$ as
\begin{equation*}
k(m)= \min\{ k \in \mathbb{N}:\mu(n_k) \leq h_m \}.
\end{equation*}
Then the double inequality
\begin{equation}\label{e5}
\mu(n_{k(m)}) \leq h_m < \mu(n_{k(m)-1})
\end{equation}
holds for all sufficiently large $m$. It follows from \eqref{e4} and \eqref{e5} that
\begin{eqnarray*}
1 &\leq& \liminf_{m \to \infty} \frac{h_m}{\mu(n_{k(m)})} \leq \limsup_{m \to \infty} \frac{h_m}{\mu(n_{k(m)})} \\
&\leq& \limsup_{k \to \infty} \frac{\mu(n_{k(m)-1})}{\mu(n_{k(m)})}=1.
\end{eqnarray*}
Hence $\lim_{m \to \infty} \frac{h_m}{\mu(n_{k(m)})}=1$ holds. Thus we have
\begin{equation}\label{e6}
\mathbb{R}^+ \preceq \mu(E) \preceq E_{\mu}.
\end{equation}
Let $\tilde{r}=(r_n)_{n \in \mathbb{N}}$ be a normalizing sequence. By Proposition \ref{p11} the equality \eqref{e6} implies that
\begin{equation*}
\overline{\Omega}^{E_{\mu}}_{0, \tilde{r}} = \overline{\Omega}^{\mathbb{R}^+}_{0, \tilde{r}}.
\end{equation*}
Hence equality \eqref{e3} holds if and only if
\begin{equation}\label{e7}
\overline{\Omega}^{\mathbb{R}^+}_{0, \tilde{r}} = \mathbb{R}^+.
\end{equation}
To prove the last equality note that $0 \in \overline{\Omega}^{\mathbb{R}^+}_{0, \tilde{r}}$. If $s \in (0, \infty)$ and $\tilde{x}:=(s r_n)_{n \in \mathbb{N}}$, then we obviously have
\begin{equation*}
\lim_{n \to \infty} \frac{s r_n}{r_n}= s.
\end{equation*}
Hence by Corollary \ref{cor37} we obtain
$\tilde{x} \in \tilde{\mathbb{R}}^+_{0, \tilde{r}}$ where $\tilde{\mathbb{R}}^+_{0, \tilde{r}}$ is a maximal self-stable family corresponding to $\Omega^{\mathbb{R}^+}_{0, \tilde{r}}$. By Proposition \ref{p10} the statement $s \in \overline{\Omega}^{\mathbb{R}^+}_{0, \tilde{r}}$ is fulfiled. Consequently \eqref{e7} holds. The implication (\textit{i}) $\Rightarrow$ (\textit{ii}) follows. The implication (\textit{ii}) $\Rightarrow$ (\textit{iii}) is trivial. Now let (\textit{iii}) hold. Using Theorem \ref{th5} we obtain that
\begin{equation}\label{e8}
\overline{p}(E_{\mu}) =0.
\end{equation}
Since $\overline{p}(E_{\mu}) = \overline{p}(\mu(E))$, equality \eqref{e8} implies
\begin{equation*}
\overline{p}(\mu(E)) =0.
\end{equation*}
By Theorem \ref{th2} we have $\overline{p}_{\mu}(E)= \overline{p}(\mu(E))$.
Consequently \eqref{e3} holds. The implication (\textit{iii}) $\Rightarrow$ (\textit{i}) is also proved.
\end{proof}

\begin{corollary}\label{cr319}
Let $E \subseteq \mathbb{N}$, let $\mu:\mathbb{N} \to \mathbb{R}^+$ be a scaling function and let $E_{\mu}:=\mu(E) \cup \{0\}$. Then the following statements are equivalent

(\textit{i}) The set $E$ is porous at infinity w.r.t. $\mu$.

(\textit{ii}) There is a normalizing sequence $\tilde{r}$ and an interval $(a,b) \subseteq (0,1)$ with $|a-b| > 0$ such that the equality
\begin{equation*}
\overline{\Omega}^{E_{\mu}}_{0, \tilde{r}'} \cap (a,b)= \emptyset
\end{equation*}
holds for every $\tilde{r}'$.

(\textit{iii}) There is a normalizing sequence $\tilde{r}$ such that
\begin{equation*}
\mathbb{R}^+ \backslash \overline{\Omega}^{E_{\mu}}_{0, \tilde{r}'} \neq \emptyset.
\end{equation*}
\end{corollary}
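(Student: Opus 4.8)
The plan is to deduce the corollary entirely from the machinery already established: Theorem~\ref{th2}, which identifies the upper porosity at infinity $\overline{p}_{\mu}(E)$ with the classical porosity $\overline{p}(\mu(E))$ at $0$; Theorem~\ref{th5}, which characterizes porosity numbers of a subset of $\mathbb{R}^+$ by empty intersections with the pretangent sets $\overline{\Omega}^{E_\mu}_{0,\tilde r'}$; and Theorem~\ref{teo211}, which characterizes nonporosity by the equality $\overline{\Omega}^{E_\mu}_{0,\tilde r}=\mathbb{R}^+$. First I would record the bridge identity $\overline{p}_{\mu}(E)=\overline{p}(\mu(E))=\overline{p}(E_\mu)$: the first equality is Theorem~\ref{th2}, and the second holds because adjoining $0$ to $\mu(E)$ changes no open subinterval of $(0,h)$, so $\lambda(\mu(E),h)=\lambda(E_\mu,h)$ for all $h>0$. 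Hence $E$ is porous at infinity if and only if $E_\mu$ possesses a strictly positive porosity number at $0$, and I will carry out the rest of the argument in these terms.

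For the implication (\textit{i})$\Rightarrow$(\textit{ii}) I would set $p:=\overline{p}(E_\mu)=\max\textbf{P}(E_\mu)>0$, which is a porosity number of $E_\mu$, and apply Theorem~\ref{th5} (legitimate since $0\in E_\mu$ by construction). This produces a normalizing sequence $\tilde r$ and an interval $(a,b)\subseteq(0,1)$ with $|a-b|=p>0$ for which condition (\textit{i}) of Theorem~\ref{th5} holds, namely $(a,b)\cap\overline{\Omega}^{E_\mu}_{0,\tilde r'}=\emptyset$ for every subsequence $\tilde r'$; this is exactly statement (\textit{ii}). For (\textit{ii})$\Rightarrow$(\textit{iii}) I would use $\tilde r$ as a subsequence of itself (so $\tilde r'=\tilde r$): then the nonempty open interval $(a,b)\subseteq\mathbb{R}^+$ misses $\overline{\Omega}^{E_\mu}_{0,\tilde r}$, and any point of $(a,b)$ witnesses $\mathbb{R}^+\setminus\overline{\Omega}^{E_\mu}_{0,\tilde r'}\neq\emptyset$. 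Finally for (\textit{iii})$\Rightarrow$(\textit{i}) I would argue by contraposition: if $E$ is nonporous, Theorem~\ref{teo211} gives $\overline{\Omega}^{E_\mu}_{0,\tilde r}=\mathbb{R}^+$ for every normalizing $\tilde r$, and since a subsequence of a normalizing sequence is again a normalizing sequence, the same equality holds for every $\tilde r'$, so $\mathbb{R}^+\setminus\overline{\Omega}^{E_\mu}_{0,\tilde r'}=\emptyset$ always, contradicting (\textit{iii}).

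The individual verifications are short, and the only point demanding genuine care is the bookkeeping of subsequences: I must make sure that the ``for every subsequence'' clause generated by Theorem~\ref{th5} lines up with the quantifier in (\textit{ii}), and that in the last implication the nonporous hypothesis propagates from $\tilde r$ to all of its subsequences $\tilde r'$. I would also want to check the degenerate case $0\notin ac\,E_\mu$, equivalently $E$ finite: there $\overline{\Omega}^{E_\mu}_{0,\tilde r}=\{0\}$ and $\overline{p}(E_\mu)=1$, so all three statements hold simultaneously, and the argument above still applies verbatim through Theorem~\ref{th5} with the choice $(a,b)=(0,1)$.
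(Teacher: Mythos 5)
Your proposal is correct and follows essentially the same route as the paper: the paper's (terse) proof likewise combines Theorem~\ref{th2} with the observation that adjoining $0$ does not change porosity at $0$, and then invokes Theorem~\ref{th5} and Theorem~\ref{teo211} to get the equivalences. Your cyclic organization (\textit{i})$\Rightarrow$(\textit{ii})$\Rightarrow$(\textit{iii})$\Rightarrow$(\textit{i}) and the explicit treatment of the finite-$E$ case merely fill in details the paper leaves implicit.
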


\begin{proof}
It follows from Theorem \ref{th2} that the set $E$ is porous at infinity if and only if $\mu(E)$ is porous at $0$ . Note also that $\mu(E)$ is porous at $0$ if and only if $E_{\mu}$ is porous at $0$. Now using Theorem \ref{teo211} and Theorem \ref{th5} we obtain that (\textit{i}) $\Leftrightarrow$ (\textit{ii}) and (\textit{i}) $\Leftrightarrow$ (\textit{iii}).
\end{proof}

\section{The Set Of Porosity Numbers At Infinity And Relativization Of Pretangent Spaces }

To describe the set of porosity numbers at infinity, we will use a slightly modified version of concept of pretangent spaces.

Let $E$ and $M$ be subsets of $\mathbb{R}^+$ such that $0 \in E$ and $0 \in acM$. If $\tilde{r}=(r_n)_{n \in \mathbb{N}}$ is a normalizing sequence, then we write $\tilde{r} \subseteq M$ if $r_n \in M$ for every $n \in \mathbb{N}$.

It is clear that $\tilde{r} \subseteq (0, \infty)$ holds for every normalizing sequence $\tilde{r}$. Note also that $\tilde{r}' \subseteq M$ if $\tilde{r} \subseteq M$ and $\tilde{r}'$ is a subsequence of $\tilde{r}$.

\begin{theorem}\label{the1}
Let $E \subseteq \mathbb{N}$, let $\mu:\mathbb{N} \to \mathbb{R}^+$ be a scaling function and let
\begin{equation*}
E_{\mu}:= \mu(E) \cup \{ 0 \}.
\end{equation*}
A number $p$ is a porosity number of $E$ at infinity, $p \in \textbf{P}_{\mu}(E)$, if and only if there is a normalizing sequence $\tilde{t} \subseteq \mu(\mathbb{N})$ and an open interval $(a,b) \subseteq (0,1)$ such that $|a-b|=p$ and the following conditions hold.

(\textit{i}) The equality $(a,b) \cap \overline{\Omega}^{E_{\mu}}_{0, \tilde{t}'}= \emptyset$ holds for every subsequence $\tilde{t}'$ of $\tilde{t}$.

(\textit{ii}) If $(c,d) \subseteq (0,1)$ is an open interval such that $(c,d) \cap \overline{\Omega}^{E_{\mu}}_{0, \tilde{t}'}= \emptyset$ holds for every subsequence $\tilde{t}'$ of $\tilde{t}$, then $|c-d| \leq |a-b|$.
\end{theorem}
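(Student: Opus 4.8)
The plan is to reduce Theorem \ref{the1} to the already-proven Theorem \ref{th5}, exploiting the tight dictionary between porosity at infinity for $E\subseteq\mathbb{N}$ and porosity at $0$ for the set $E_\mu=\mu(E)\cup\{0\}\subseteq\mathbb{R}^+$. The crucial structural fact is that the scaling function $\mu$ is strictly decreasing with $\mu(n)\to 0$, so that $\mu(\mathbb{N})$ accumulates only at $0$ and the ``gaps'' measured by $\lambda_\mu(E,n)$ correspond exactly to the gaps of $\mu(E)$ near $0$ measured by $\lambda(\mu(E),h)$. I would first establish the definitional bridge $\textbf{P}_\mu(E)=\textbf{P}(E_\mu)$, and then apply Theorem \ref{th5} to $E_\mu$. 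The one genuinely new feature of the present statement, compared with Theorem \ref{th5}, is the constraint $\tilde t\subseteq\mu(\mathbb{N})$ on the normalizing sequence, so the real work lies in showing this restriction costs nothing.

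\textbf{Step 1: identify the porosity sets.} First I would show that $p\in\textbf{P}_\mu(E)$ if and only if $p\in\textbf{P}(E_\mu)$. By Definition \ref{d3}, $p\in\textbf{P}_\mu(E)$ means $p=\lim_{k}\lambda_\mu(E,n_k)/\mu(n_k)$ for some strictly increasing $(n_k)$. Using the remark following the definition of $\lambda_\mu$, the quantity $\lambda_\mu(E,n)$ is precisely $\lambda(\mu(E),\mu(n))$, i.e.\ the length of the largest point-free subinterval of $(0,\mu(n))$ for the set $\mu(E)=E_\mu\setminus\{0\}$. Hence setting $h_k=\mu(n_k)\to 0^+$ gives $\lambda_\mu(E,n_k)/\mu(n_k)=\lambda(E_\mu,h_k)/h_k$, which realizes $p$ as a porosity number of $E_\mu$ along the sequence $\tilde h=(h_k)\subseteq\mu(\mathbb{N})$. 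This yields $\textbf{P}_\mu(E)\subseteq\textbf{P}(E_\mu)$. For the reverse, Theorem \ref{th2} gives $\overline p(\mu(E))=\overline p_\mu(E)$, and an analogous elementary argument (the monotonicity of $\mu$ forces any $h\in(0,\mu(n_1))$ to fall between consecutive values $\mu(n_k)\le h<\mu(n_{k-1})$, as already exploited in the proof of Theorem \ref{th2}) shows every porosity number of $E_\mu$ is attained along a subsequence of $\mu(\mathbb{N})$, giving $\textbf{P}(E_\mu)\subseteq\textbf{P}_\mu(E)$.

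\textbf{Step 2: apply Theorem \ref{th5} with the admissibility constraint.} Since $0\in E_\mu\subseteq\mathbb{R}^+$, Theorem \ref{th5} characterizes $p\in\textbf{P}(E_\mu)$ by the existence of a normalizing sequence $\tilde r$ and an interval $(a,b)\subseteq(0,1)$ with $|a-b|=p$ satisfying conditions ($i$) and ($ii$) relative to the pretangent spaces $\overline\Omega^{E_\mu}_{0,\tilde r'}$. These conditions are formally identical to the ($i$), ($ii$) in Theorem \ref{the1}; the only discrepancy is that Theorem \ref{th5} allows an arbitrary $\tilde r$, whereas here I must produce $\tilde t\subseteq\mu(\mathbb{N})$. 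So the sufficiency direction (existence of such $\tilde t$ implies $p\in\textbf{P}_\mu(E)$) is immediate from Theorem \ref{th5} together with Step 1, since a normalizing sequence inside $\mu(\mathbb{N})$ is in particular a normalizing sequence. For necessity I would inspect the construction in the proof of Theorem \ref{th5}: there the normalizing sequence was built as $r_n=h_{m_n}$ from the sequence $\tilde h$ realizing $p$. By Step 1 that realizing sequence can be taken inside $\mu(\mathbb{N})$, so the constructed $\tilde r$ already satisfies $\tilde r\subseteq\mu(\mathbb{N})$, and I set $\tilde t=\tilde r$.

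\textbf{Main obstacle.} The delicate point is the necessity direction, specifically verifying that restricting normalizing sequences to $\mu(\mathbb{N})$ does not shrink the collection of ``admissible'' gap-intervals appearing in conditions ($i$)--($ii$). I expect this to hinge on re-reading the proof of Theorem \ref{th5} and checking that every normalizing sequence it manufactures can be chosen termwise in $\mu(\mathbb{N})$; the equality $\lambda_\mu(E,n_k)=\lambda(\mu(E),\mu(n_k))$ and the fact that the maximizing endpoints $a_m\in E_\mu$ (because $E_\mu$ is closed at these gap-endpoints, being the image of $\mathbb{N}$ under the monotone $\mu$) are what make this work. The subsequence-stability built into conditions ($i$)--($ii$) is preserved under this restriction because any subsequence $\tilde t'$ of $\tilde t\subseteq\mu(\mathbb{N})$ again lies in $\mu(\mathbb{N})$, as noted before the theorem. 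Once this termwise membership is confirmed, the theorem follows by transporting ($i$) and ($ii$) verbatim through the identification $\textbf{P}_\mu(E)=\textbf{P}(E_\mu)$.
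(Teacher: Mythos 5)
Your reduction fails at Step 1: the inclusion $\textbf{P}(E_\mu)\subseteq\textbf{P}_\mu(E)$ is false for a general scaling function, and this is precisely why Theorem \ref{the1} restricts the normalizing sequences to $\mu(\mathbb{N})$ instead of being a corollary of Theorem \ref{th5}. Concretely, take $E=\mathbb{N}$ and $\mu(n)=2^{-n}$. Every gap of $E$ has the form $(m,m+1)$, so $\lambda_\mu(\mathbb{N},n)=\mu(n)-\mu(n+1)=2^{-n-1}$ and $\lambda_\mu(\mathbb{N},n)/\mu(n)=1/2$ for all $n$; hence $\textbf{P}_\mu(\mathbb{N})=\{1/2\}$. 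But for $h_k=3\cdot 2^{-k-2}$ one computes $\lambda(E_\mu,h_k)=2^{-k-2}$, so $1/3\in\textbf{P}(E_\mu)\setminus\textbf{P}_\mu(\mathbb{N})$. The paper itself records this phenomenon: by the Example following Theorem \ref{p3}, any $\mu$ with $\mu(n+1)/\mu(n)\to 0$ gives $\underline{p}_\mu(\mathbb{N})=1$, i.e. $\textbf{P}_\mu(\mathbb{N})=\{1\}$, while Proposition \ref{p6} gives $\underline{p}(\mu(\mathbb{N}))=1/2$, so $1/2\in\textbf{P}(\mu(\mathbb{N}))$; and the remark after \eqref{eq31} states explicitly that the analogue of Theorem \ref{th2} for lower porosity fails. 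Your appeal to ``an analogous elementary argument'' as in Theorem \ref{th2} breaks down because that argument is a domination argument: it shows each value $\lambda(\mu(E),h)/h$ is bounded above by a value attained at a scale in $\mu(\mathbb{N})$, which preserves the maximum (the limsup) of the cluster set but not the cluster set itself — the function $\Phi_{E_\mu}$ dips to strictly smaller values at scales $h$ lying between consecutive points of $\mu(\mathbb{N})$, and these values are porosity numbers of $E_\mu$ invisible along $\mu(\mathbb{N})$. Indeed, Proposition \ref{p5} obtains $\textbf{P}(\mu(E))=\textbf{P}_\mu(E)$ only under the extra hypothesis \eqref{eq42}, which Theorem \ref{the1} does not assume.

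This error is fatal to your sufficiency direction, which routes through Theorem \ref{th5} to conclude $p\in\textbf{P}(E_\mu)$ and then invokes the false inclusion to land in $\textbf{P}_\mu(E)$. (Your necessity direction is essentially sound and matches the paper's intent: a realizing sequence for $p\in\textbf{P}_\mu(E)$ automatically lies in $\mu(\mathbb{N})$, since $\lambda_\mu(E,n)=\lambda(E_\mu,\mu(n))$, and running the first half of the proof of Theorem \ref{th5} on it produces $\tilde t\subseteq\mu(\mathbb{N})$.) The proof the paper has in mind — ``completely similar to the proof of Theorem \ref{th5}'' — never un-relativizes: one repeats the entire argument with every sequence involved (the realizing sequence, the normalizing sequence, and all subsequences extracted along the way) confined to $\mu(\mathbb{N})$. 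In the sufficiency direction one then takes a convergent subsequence of $\lambda(E_\mu,t_n)/t_n$; its limit belongs to $\textbf{P}_\mu(E)$ by Definition \ref{d3} precisely because each $t_n\in\mu(\mathbb{N})$, and conditions (\textit{i})--(\textit{ii}) force this limit to equal $|a-b|$ exactly as in Theorem \ref{th5}. So the restriction $\tilde t\subseteq\mu(\mathbb{N})$ is not a technical nuisance whose removal ``costs nothing''; it is the substance of the theorem, and showing it can be removed is exactly what cannot be done.
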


The proof of this theorem is completely similar to the proof of Theorem \ref{th5}, so that we omit it here.

In the following lemma we understand the symbol $\preceq$ in the sense of Definition \ref{dkck}.
\begin{lemma}\label{lem42}
Let $0 \in A \subseteq \mathbb{R}^+$ and let $M$ and $K$ be subsets of $\mathbb{R}^+$ which satisfy
\begin{equation*}
0 \in (acM) \cap (acK) ~\text{and}~M \preceq K.
\end{equation*}
Then for every normalizing sequence $\tilde{r}= (r_n)_{n \in \mathbb{N}} \subseteq M$ there is a normalizing sequence $\tilde{t}=(t_n)_{n \in \mathbb{N}} \subseteq K$ such that, for every strictly increasing sequence $(n_k)_{k \in \mathbb{N}}$ of natural numbers, the equality
\begin{equation}\label{e34}
\Omega^{A}_{0, \tilde{r}'} = \Omega^{A}_{0, \tilde{t}'}
\end{equation}
holds for $\tilde{r}'=(r_{n_k})_{k \in \mathbb{N}}$ and $\tilde{t}'=(t_{n_k})_{k \in \mathbb{N}}$.
\end{lemma}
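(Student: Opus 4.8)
The plan is to use the relation $M \preceq K$ for one purpose only, namely to replace the given normalizing sequence lying in $M$ by an \emph{asymptotically equal} normalizing sequence lying in $K$, and then to observe that asymptotic equivalence of two normalizing sequences forces the associated maximal self-stable families, and their pseudometrics, to coincide term by term. The hypothesis $0 \in A$ will be used implicitly, since by Corollary \ref{cor37} it guarantees that the maximal self-stable family attached to $A$ is uniquely determined by the normalizing sequence, so there is no ambiguity in $\Omega^{A}_{0,\tilde{r}'}$ or $\Omega^{A}_{0,\tilde{t}'}$.

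First I would construct $\tilde{t}$. The sequence $\tilde{r}=(r_n)_{n\in\mathbb{N}}\subseteq M$ satisfies $\lim_{n\to\infty}r_n=0$ and $r_n\in M\setminus\{0\}$, so it is an admissible input for Definition \ref{dkck}. Applying $M\preceq K$ to $\tilde{r}$ produces a sequence $(t_n)_{n\in\mathbb{N}}$ with $t_n\in K\setminus\{0\}$ for all $n$ and
\[
\lim_{n\to\infty}\frac{r_n}{t_n}=1 .
\]
Since $r_n\to 0$ and $r_n/t_n\to 1$, one gets $t_n=r_n\,(t_n/r_n)\to 0$; together with $t_n>0$ this shows that $\tilde{t}=(t_n)_{n\in\mathbb{N}}\subseteq K$ is indeed a normalizing sequence. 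The decisive feature is that a single limit relation $r_n/t_n\to 1$ is inherited by every subsequence, so that $\lim_{k\to\infty}r_{n_k}/t_{n_k}=1$ for every strictly increasing $(n_k)_{k\in\mathbb{N}}$. This is exactly what allows one fixed $\tilde{t}$, depending on $\tilde{r}$ but not on $(n_k)$, to serve all subsequences simultaneously, so no diagonal construction is needed.

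Next I would verify \eqref{e34}. Fix a strictly increasing $(n_k)_{k\in\mathbb{N}}$ and write $\tilde{r}'=(r_{n_k})_{k\in\mathbb{N}}$, $\tilde{t}'=(t_{n_k})_{k\in\mathbb{N}}$. For any $(x_n)_{n\in\mathbb{N}}\in\tilde{A}$ the identity
\[
\frac{x_{n_k}}{t_{n_k}}=\frac{x_{n_k}}{r_{n_k}}\cdot\frac{r_{n_k}}{t_{n_k}},
\]
together with $r_{n_k}/t_{n_k}\to 1$, shows that $\lim_{k\to\infty}x_{n_k}/r_{n_k}$ exists and is finite if and only if $\lim_{k\to\infty}x_{n_k}/t_{n_k}$ does, and that the two limits then agree. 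By Corollary \ref{cor37} this means that the maximal self-stable families defining $\Omega^{A}_{0,\tilde{r}'}$ and $\Omega^{A}_{0,\tilde{t}'}$ are the \emph{same} subset of $\tilde{A}$. Moreover, for $\tilde{x},\tilde{y}$ in this common family,
\[
|\tilde{x}-\tilde{y}|_{\tilde{r}'}=\lim_{k\to\infty}\frac{|x_{n_k}-y_{n_k}|}{r_{n_k}}
=\lim_{k\to\infty}\left(\frac{|x_{n_k}-y_{n_k}|}{t_{n_k}}\cdot\frac{t_{n_k}}{r_{n_k}}\right)
=|\tilde{x}-\tilde{y}|_{\tilde{t}'},
\]
so the two pseudometrics coincide as well. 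Hence the pseudometric spaces, and therefore their metric identifications, are identical, which is precisely \eqref{e34}.

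As for difficulty, I expect no serious obstacle: the whole content is the unwinding of Definition \ref{dkck} and Corollary \ref{cor37}. The single point deserving attention is the uniformity remark above, namely that one asymptotic equivalence $r_n/t_n\to 1$ automatically controls every subsequence, so that $\tilde{t}$ may be chosen once and for all. If one prefers to read \eqref{e34} as an equality of subsets of $\mathbb{R}^+$ in the sense of Proposition \ref{p10}, the very same computation yields $\overline{\Omega}^{A}_{0,\tilde{r}'}=\overline{\Omega}^{A}_{0,\tilde{t}'}$ directly, and the canonical isometric identification $L$ then delivers the stated equality of pretangent spaces.
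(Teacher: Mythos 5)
Your proposal is correct and follows essentially the same route as the paper: apply $M \preceq K$ to $\tilde{r}$ to obtain $\tilde{t} \subseteq K$ with $r_n/t_n \to 1$, note that this ratio limit passes to every subsequence, and invoke Corollary \ref{cor37} to conclude that the maximal self-stable families for $\tilde{r}'$ and $\tilde{t}'$ coincide. Your explicit check that the two pseudometrics $|\cdot,\cdot|_{\tilde{r}'}$ and $|\cdot,\cdot|_{\tilde{t}'}$ agree is a detail the paper leaves implicit ("Equality \eqref{e34} follows"), and it is a welcome tightening rather than a different argument.
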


\begin{proof}
Let $\tilde{r}=(r_n)_{n \in \mathbb{N}} \subseteq M$. By the definition of $\preceq$ we can find $\tilde{t}=(t_n)_{n \in \mathbb{N}} \subseteq K$ such that $\lim_{n \to \infty}\frac{r_n}{t_n}=1$. This equality gives also the limit relation $\lim_{k \to \infty} \frac{r_{n_k}}{t_{n_k}}=1$. for every strictly increasing sequence $(n_k)_{k \in \mathbb{N}}$ of natural numbers.

Consequently for every $\tilde{x}=(x_n) \in \tilde{A}$ we have
\begin{equation*}
\left(~ \text{there is} ~ \lim_{k \to \infty} \frac{x_{n_k}}{r_{n_k}} < \infty \right) \Leftrightarrow \left(~ \text{there is} ~ \lim_{k \to \infty} \frac{x_{n_k}}{t_{n_k}} < \infty \right).
\end{equation*}
Using Corollary \ref{cor37} we obtain the equality $\tilde{A}_{0, \tilde{r}'} = \tilde{A}_{0, \tilde{r}'}$ for the corresponding maximal self-stable families. Equality \eqref{e34} follows.
\end{proof}

Recall that the symbol $\textbf{P}(X)$ denotes the set of all porosity numbers at $0$ of a set $X \subseteq \mathbb{R}^+$.

The following proposition describes a sufficient condition under which $\textbf{P}(\mu(E)) = \textbf{P}_{\mu}(E)$ holds for every $E \subseteq \mathbb{N}$.

\begin{proposition}\label{p5}
Let a scaling function $\mu$ satisfy the limit relation
\begin{equation}\label{eq42}
\lim_{n \to \infty} \frac{\mu(n+1)}{\mu(n)}=1.
\end{equation}
Then for every $E \subseteq \mathbb{N}$ the equality
\begin{equation}\label{iii}
\textbf{P}(\mu(E)) = \textbf{P}_{\mu}(E)
\end{equation}
holds.
\end{proposition}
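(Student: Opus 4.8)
The plan is to reduce the claim to a comparison of the two characterizations already at our disposal: Theorem \ref{th5}, which describes $\textbf{P}(E_\mu)$ for $E_\mu=\mu(E)\cup\{0\}$ through \emph{arbitrary} normalizing sequences, and Theorem \ref{the1}, which describes $\textbf{P}_\mu(E)$ through the \emph{same} conditions (i) and (ii) but only for normalizing sequences $\tilde t\subseteq\mu(\mathbb N)$. Since $\lambda(X,h)$ is unaffected by adjoining $0$ to $X$, we have $\textbf{P}(\mu(E))=\textbf{P}(E_\mu)$ (recall $\textbf{P}(X)=\textbf{P}(X\cup\{0\})$, as noted in the proof of Corollary \ref{cor29}), so it suffices to prove $\textbf{P}(E_\mu)=\textbf{P}_\mu(E)$. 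The edge case of finite $E$, where $0\notin ac\,E_\mu$ and both sets equal $\{1\}$, is covered by the degenerate branches of Theorems \ref{th5} and \ref{the1}. One inclusion needs no hypothesis on $\mu$: any witness $(\tilde t,(a,b))$ for $p\in\textbf{P}_\mu(E)$ via Theorem \ref{the1} has $\tilde t\subseteq\mu(\mathbb N)$ a bona fide normalizing sequence satisfying (i) and (ii), so taking $\tilde r=\tilde t$ in Theorem \ref{th5} gives $p\in\textbf{P}(E_\mu)$. Hence $\textbf{P}_\mu(E)\subseteq\textbf{P}(E_\mu)$.

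The substance is the reverse inclusion $\textbf{P}(E_\mu)\subseteq\textbf{P}_\mu(E)$, and this is exactly where \eqref{eq42} enters, through Lemma \ref{lem42}. First I would record the one consequence of \eqref{eq42} that is actually used, namely $(0,\infty)\preceq\mu(\mathbb N)$ in the sense of Definition \ref{dkck}. Given $e_n\to0^+$, put $t_n=\mu(k_n)$ with $k_n=\min\{k\in\mathbb N:\mu(k)\le e_n\}$; then $\mu(k_n)\le e_n<\mu(k_n-1)$, so $1\le e_n/t_n<\mu(k_n-1)/\mu(k_n)$, and since $k_n\to\infty$, relation \eqref{eq42} forces $\mu(k_n-1)/\mu(k_n)\to1$, whence $e_n/t_n\to1$. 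This is the only step requiring the hypothesis, and a geometric $\mu$ shows it genuinely fails without \eqref{eq42}.

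Next I would feed this into Lemma \ref{lem42} with $A=E_\mu$, $M=(0,\infty)$ and $K=\mu(\mathbb N)$ (the accumulation hypotheses $0\in(acM)\cap(acK)$ hold, as $\mu(n)\to0$). Let $p\in\textbf{P}(E_\mu)$ and let $\tilde r$, $(a,b)\subseteq(0,1)$ with $|a-b|=p$, be a witness supplied by Theorem \ref{th5}. Every normalizing sequence lies in $(0,\infty)=M$, so Lemma \ref{lem42} yields a normalizing sequence $\tilde t\subseteq\mu(\mathbb N)$ with $r_n/t_n\to1$ such that the maximal self-stable families agree for every pair of subsequences $\tilde r'=(r_{n_k})$, $\tilde t'=(t_{n_k})$ indexed by the same strictly increasing $(n_k)$. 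Here I would note explicitly that one gets $\overline\Omega^{E_\mu}_{0,\tilde r'}=\overline\Omega^{E_\mu}_{0,\tilde t'}$, and not merely $\Omega^{E_\mu}_{0,\tilde r'}=\Omega^{E_\mu}_{0,\tilde t'}$: for any $\tilde x$ in the common family, $x_{n_k}/r_{n_k}=(x_{n_k}/t_{n_k})(t_{n_k}/r_{n_k})$ with $t_{n_k}/r_{n_k}\to1$, so the limits defining the two embeddings coincide (cf. Corollary \ref{cor37}).

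Finally I would transfer conditions (i) and (ii) from $\tilde r$ to $\tilde t$. Since the correspondence $(n_k)\mapsto$ (subsequence) is a bijection between subsequences of $\tilde r$ and of $\tilde t$, and paired subsequences have equal $\overline\Omega$, the disjointness $(a,b)\cap\overline\Omega^{E_\mu}_{0,\tilde r'}=\emptyset$ for all $\tilde r'$ transports to $(a,b)\cap\overline\Omega^{E_\mu}_{0,\tilde t'}=\emptyset$ for all $\tilde t'$, giving (i) for $\tilde t$; and if some $(c,d)$ is disjoint from $\overline\Omega^{E_\mu}_{0,\tilde t'}$ for all $\tilde t'$, the same pairing makes it disjoint from $\overline\Omega^{E_\mu}_{0,\tilde r'}$ for all $\tilde r'$, so maximality for $\tilde r$ gives $|c-d|\le|a-b|$, which is (ii) for $\tilde t$. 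Thus $(\tilde t,(a,b))$ witnesses $p\in\textbf{P}_\mu(E)$ by Theorem \ref{the1}, completing the proof. The main obstacle is conceptual rather than computational: recognizing that the entire gap between Theorem \ref{th5} and Theorem \ref{the1} is the restriction of normalizing sequences to $\mu(\mathbb N)$, and that Lemma \ref{lem42} closes it under the single relation $(0,\infty)\preceq\mu(\mathbb N)$; the remaining "for all subsequences" bookkeeping is routine once the index-preserving correspondence is fixed.
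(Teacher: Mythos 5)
Your proof is correct and follows essentially the same route as the paper: derive $(0,\infty)\preceq\mu(\mathbb{N})$ from \eqref{eq42} (as in the proof of Theorem \ref{teo211}), apply Lemma \ref{lem42} with $A=\mu(E)\cup\{0\}$, and convert a witness $(\tilde r,(a,b))$ from Theorem \ref{th5} into a witness $\tilde t\subseteq\mu(\mathbb{N})$ for Theorem \ref{the1}. You additionally fill in details the paper leaves implicit, notably that the index-preserving subsequence pairing gives equality of the embedded sets $\overline{\Omega}^{E_\mu}_{0,\tilde r'}=\overline{\Omega}^{E_\mu}_{0,\tilde t'}$ (not merely of the abstract pretangent spaces), which is what conditions (\textit{i}) and (\textit{ii}) actually refer to.
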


\begin{proof}
As in the proof of Theorem \ref{teo211} we obtain $(0, \infty) \preceq \mu(\mathbb{N})$ if \eqref{eq42} holds. Let $E \subseteq \mathbb{N}$. The inclusion
\begin{equation*}
\textbf{P}_{\mu}(E) \subseteq \textbf{P}_(\mu(E))
\end{equation*}
is immediate. Let $p$ be a porosity number of $\mu(E)$ at $0$. By Theorem \ref{th5} there is a normalizing sequence $\tilde{r} \subseteq (0, \infty)$ an open interval $(a,b) \subseteq (0,1)$ such that $|a-b|=p$ and statements (\textit{i}) and (\textit{ii}) of this theorem hold. Using Lemma \ref{lem42} with $A= \mu(E) \cup \{0\}$ we obtain that there is a normalizing sequence $\tilde{t} \subseteq \mu(\mathbb{N})$ such that $\tilde{t}$ and $(a,b)$ satisfy statements (\textit{i}) and (\textit{ii}) of Theorem \ref{the1}. Hence, by Theorem \ref{the1}, $p$ is a porosity number at infinity of $E$ w.r.t. $\mu$. Thus $\textbf{P}(\mu(E)) \subseteq \textbf{P}_{\mu}(E)$. Equality \eqref{iii} follows.
\end{proof}

The next theorem gives us sufficient conditions under which the sets of porosity numbers for every $E \subseteq \mathbb{N}$ coincede for two scaling functions.

\begin{theorem}\label{th1}%{th1}
Let $\mu_1$ and $\mu_2$ be scaling functions. If the limit relation
\begin{equation}\label{eq10}%{limteo}
\lim_{n,m \to \infty} \frac{\mu_1(n)\mu_2(m)}{\mu_2(n)\mu_1(m)}=1
\end{equation}
holds, then we have the equality
\begin{equation}\label{eq11}%{m1m2}
\textbf{P}_{\mu_1}(E) = \textbf{P}_{\mu_2}(E)
\end{equation}
for every $E \subseteq \mathbb{N}$.
\end{theorem}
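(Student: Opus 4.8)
The plan is to show that the two normalized gap sequences differ by a vanishing amount, so that they have the same set of cluster values. Recall from Definition \ref{d3} and the discussion following it that $\textbf{P}_{\mu_i}(E)$ is precisely the cluster set (the set of subsequential limits) of the sequence $\left(\frac{\lambda_{\mu_i}(E,n)}{\mu_i(n)}\right)_{n\in\mathbb{N}}$. Hence it suffices to prove
\begin{equation*}
\lim_{n\to\infty}\left(\frac{\lambda_{\mu_1}(E,n)}{\mu_1(n)}-\frac{\lambda_{\mu_2}(E,n)}{\mu_2(n)}\right)=0,
\end{equation*}
because two real sequences whose difference tends to $0$ share the same cluster set: if $a_n-b_n\to 0$ and $a_{n_k}\to p$, then $b_{n_k}\to p$ as well. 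The case $|E|<\infty$ is trivial, since by the remark in Section~3 we have $\textbf{P}_{\mu_1}(E)=\textbf{P}_{\mu_2}(E)=\{1\}$, so I would assume $E$ infinite; this guarantees that for each $n$ the index set $I_n:=\{(p,q)\in\mathbb{N}^2:\ n\le p<q,\ (p,q)\cap E=\emptyset\}$ is nonempty (take two consecutive elements of $E$ that are $\ge n$).

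First I would set $g(j):=\mu_1(j)/\mu_2(j)$ and rewrite hypothesis \eqref{eq10} as the statement that $g(n)/g(m)\to 1$ as $n,m\to\infty$; equivalently, for every $\varepsilon>0$ there is $N$ with $\left|\frac{g(a)}{g(b)}-1\right|<\varepsilon$ for all $a,b\ge N$. The key pointwise estimate is then that, for $n\ge N$ and every pair $(p,q)\in I_n$,
\begin{equation*}
\left|\frac{\mu_1(p)-\mu_1(q)}{\mu_1(n)}-\frac{\mu_2(p)-\mu_2(q)}{\mu_2(n)}\right|\le 2\varepsilon.
\end{equation*}
To obtain it, write $\mu_1(j)=g(j)\mu_2(j)$ and factor $g(n)$ out of the first quotient, getting
\begin{equation*}
\frac{\mu_1(p)-\mu_1(q)}{\mu_1(n)}=\frac{\mu_2(p)-\mu_2(q)}{\mu_2(n)}+\frac{\left(\frac{g(p)}{g(n)}-1\right)\mu_2(p)-\left(\frac{g(q)}{g(n)}-1\right)\mu_2(q)}{\mu_2(n)}.
\end{equation*}
Since $p,q\ge n$ and $\mu_2$ is decreasing, $\mu_2(p),\mu_2(q)\le\mu_2(n)$, and since $\left|\frac{g(p)}{g(n)}-1\right|,\left|\frac{g(q)}{g(n)}-1\right|<\varepsilon$, the error term is bounded by $2\varepsilon$ in absolute value.

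The decisive observation is that the index set $I_n$ over which the supremum in \eqref{eq6} is taken depends only on $n$ and $E$, not on the scaling function, so the two normalized gaps are suprema of the functions $(p,q)\mapsto\frac{\mu_i(p)-\mu_i(q)}{\mu_i(n)}$ over the \emph{same} set $I_n$. The pointwise bound therefore transfers to the suprema: for $n\ge N$,
\begin{equation*}
\frac{\lambda_{\mu_1}(E,n)}{\mu_1(n)}\le\frac{\lambda_{\mu_2}(E,n)}{\mu_2(n)}+2\varepsilon,
\end{equation*}
and by symmetry the reverse inequality holds too, whence $\left|\frac{\lambda_{\mu_1}(E,n)}{\mu_1(n)}-\frac{\lambda_{\mu_2}(E,n)}{\mu_2(n)}\right|\le 2\varepsilon$ for all $n\ge N$. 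Letting $\varepsilon\to 0$ yields the desired limit and hence \eqref{eq11}. I expect the main subtlety to be exactly this transfer step: the pairs optimizing the gap for $\mu_1$ and for $\mu_2$ need not coincide, so a naive ``substitute the same optimal pair'' argument must be replaced by the uniform pointwise estimate over the common index set $I_n$, with the monotonicity of $\mu_2$ supplying the uniform control $\mu_2(p),\mu_2(q)\le\mu_2(n)$ that keeps the error term below $2\varepsilon$.
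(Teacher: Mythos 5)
Your proof is correct, and its engine coincides with the paper's: your pointwise estimate $\frac{\mu_1(p)-\mu_1(q)}{\mu_1(n)}\le\frac{\mu_2(p)-\mu_2(q)}{\mu_2(n)}+2\varepsilon$ for admissible pairs is exactly the paper's inequality \eqref{eq15}; the paper derives it from the two-sided bound \eqref{eq14} where you factor through $g=\mu_1/\mu_2$, and both rest on the same monotonicity control $\mu_2(p),\mu_2(q)\le\mu_2(n)$. The surrounding logic, however, is genuinely different. The paper argues one cluster point at a time: it fixes $p\in\textbf{P}_{\mu_1}(E)$ with a witnessing sequence $(n_k)$, chooses for each $k$ a pair attaining the supremum $\lambda_{\mu_1}(E,n_k)$ (so it implicitly uses that the supremum in \eqref{eq6} is attained when $E$ is infinite), applies the one-sided estimate along $(n_k)$, extracts a convergent subsequence of the $\mu_2$-quotients to obtain $p'$ with $p\le p'$, and then runs the whole argument again with $\mu_1$ and $\mu_2$ interchanged on a further subsequence to force $p'\le p$. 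You instead prove the uniform two-sided statement that the full sequences $\bigl(\lambda_{\mu_i}(E,n)/\mu_i(n)\bigr)_{n\in\mathbb{N}}$ differ by at most $2\varepsilon$ for all $n\ge N$, so their difference tends to $0$, and finish with the elementary fact that real sequences with vanishing difference have the same cluster set. This buys two simplifications: passing the pointwise bound over the common index set $I_n$ directly to suprema removes any need for optimal pairs to exist, and the double subsequence extraction disappears; it also yields a marginally stronger conclusion (uniform closeness of the two porosity quotients for all large $n$, not merely equality of their cluster sets). Nothing is lost in exchange — even the finite case is subsumed, since then both quotients are eventually equal to $1$.
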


\begin{proof}
If $|E|< \infty$, then \eqref{pm} and \eqref{eq5} give us the equalities
\begin{equation*}
\overline{p}_{\mu_1}(E) = \underline{p}_{\mu_1}(E) = \overline{p}_{\mu_2}(E) = \underline{p}_{\mu_2}(E)=1.
\end{equation*}
Consequently $\textbf{P}_{\mu_1}(E) = \textbf{P}_{\mu_2}(E)= \{1\}$, that implies \eqref{eq11}. Thus, without loss of generality, we assume $|E|= \infty$.

Suppose \eqref{eq10} holds. Let $p \in \textbf{P}_{\mu_1}(E)$. By Definition \ref{d3} there is a strictly increasing sequence $(n_k)_{k \in \mathbb{N}}$ such that
\begin{equation}\label{eq12}%{plim1}
p = \lim_{k \to \infty} \frac{\lambda_{\mu_1}(E, n_k)}{\mu_1(n_k)}.
\end{equation}
Since $E$ is infinite, for every $k \in \mathbb{N}$ there are $n^{(1)}_k, n^{(2)}_k \in \mathbb{N}$ such that $n_k \leq n^{(1)}_k < n^{(2)}_k $ and
\begin{equation}\label{eq13}%{lambdam1m2}
\lambda_{\mu_1}(E, n_k) = \mu_1(n^{(1)}_k)- \mu_1(n^{(2)}_k).
\end{equation}
Let $\varepsilon >0$. Equality \eqref{eq10} implies
\begin{equation}\label{eq14}%{m2km1km2}
(1- \varepsilon) \frac{\mu_2(n)}{\mu_2(m)} \leq \frac{\mu_1(n)}{\mu_1(m)} \leq (1+ \varepsilon) \frac{\mu_2(n)}{\mu_2(m)}
\end{equation}
if $m$ and $n$ are sufficiently large.

Using \eqref{eq13} and \eqref{eq14} we obtain
\begin{eqnarray}\label{eq15}%{lambdaeqnaary}
\frac{\lambda_{\mu_1}(E, n_k)}{\mu_1(n_k)} &=& \frac{\mu_1(n^{(1)}_k)}{\mu_1(n_k)}- \frac{\mu_1(n^{(2)}_k)}{\mu_1(n_k)} \leq  (1+ \varepsilon) \frac{\mu_2(n^{(1)}_k)}{\mu_{(2)}(n_k)}- (1- \varepsilon) \frac{\mu_2(n^2_k)}{\mu_2(n_k)} \nonumber \\
& \leq & \frac{\mu_2(n^{(1)}_k)- \mu_2(n^{(2)}_k)}{\mu_2(n_k)} + 2 \varepsilon  \leq  \frac{\lambda_{\mu_2}(E, n_k)}{\mu_2(n_k)} + 2 \varepsilon .
\end{eqnarray}
It is clear that $\frac{\lambda_{\mu_2}(E, n_k)}{\mu_2(n_k)} \leq 1$ for every $k$. Hence $\left( \frac{\lambda_{\mu_2}(E, n_k)}{\mu_2(n_k)} \right)_{k \in \mathbb{N}}$ contains a convergent subsequence $\left( \frac{\lambda_{\mu_2}(E, n_{k'})}{\mu_2(n_{k'})} \right)_{k' \in \mathbb{N}}$. Write

\begin{equation}\label{eq16}%{pussu}
p' = \lim_{k' \to \infty} \frac{\lambda_{\mu_2}(E, n_{k'})}{\mu_2(n_{k'})}.
\end{equation}
It follows from \eqref{eq12}, \eqref{eq15} and \eqref{eq16} that $p \leq p' + 2 \varepsilon$. Letting $\varepsilon \to 0 $ we obtain the inequality $p \leq p'$. Let us prove the converse inequality. Using \eqref{eq16} instead of \eqref{eq12} and $(n_{k'})_{k' \in \mathbb{N}}$ instead of $(n_k)_{k \in \mathbb{N}}$ and repeating the above arguments we can find a subsequence $(n_{k''})_{k'' \in \mathbb{N}}$ of
$(n_{k'})_{k' \in \mathbb{N}}$ such that $\left( \frac{\lambda_{\mu_1}(E, n_{k''})}{\mu_1(n_{k''})} \right)_{k'' \in \mathbb{N}}$ is a convergent subsequence of $\left( \frac{\lambda_{\mu_1}(E, n_{k'})}{\mu_1(n_{k'})} \right)_{k' \in \mathbb{N}}$ and
\begin{equation*}
p' \leq \lim_{k'' \to \infty} \frac{\lambda_{\mu_1}(E, n_{k''})}{\mu_1(n_{k''})}.
\end{equation*}
Since $\left( \frac{\lambda_{\mu_1}(E, n_{k''})}{\mu_1(n_{k''})} \right)_{k'' \in \mathbb{N}}$ is also a subsequence of the sequence $\left( \frac{\lambda_{\mu_1}(E, n_k)}{\mu_1(n_k)} \right)_{k \in \mathbb{N}}$, we have the equality
\begin{equation*}
p = \lim_{k'' \to \infty } \frac{\lambda_{\mu_1}(E, n_{k''})}{\mu(n_{k''}}.
\end{equation*}
Consequently $p' \leq p$ holds, that, together with $p \leq p'$, implies the equality $p= p'$. It is clear that $p' \in \textbf{P}_{\mu_2}(E)$. Since $p$ is an arbitrary element of $\textbf{P}_{\mu_1}(E)$, we obtain $\textbf{P}_{\mu_1}(E) \subseteq \textbf{P}_{\mu_2}(E)$. Similar arguments show that $\textbf{P}_{\mu_2}(E) \subseteq \textbf{P}_{\mu_1}(E)$. Equality \eqref{eq11} follows.
\end{proof}

\begin{corollary}
Let $c >0$ and let $\mu_1:\mathbb{N} \to \mathbb{R}^+$ and $\mu_2:\mathbb{N} \to \mathbb{R}^+$ be scaling functions satisfying the equality $\lim_{n \to \infty} \frac{\mu_1(n)}{\mu_2(n)}=c$. Then the equality
\begin{equation*}
\textbf{P}_{\mu_1}(E) = \textbf{P}_{\mu_2}(E)
\end{equation*}
holds for every $E \subseteq \mathbb{N}$.
\end{corollary}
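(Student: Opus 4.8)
The plan is to recognize this as an immediate consequence of Theorem \ref{th1}. It therefore suffices to verify that the single hypothesis $\lim_{n \to \infty} \frac{\mu_1(n)}{\mu_2(n)} = c$ forces the double-limit condition \eqref{eq10}, namely
\begin{equation*}
\lim_{n,m \to \infty} \frac{\mu_1(n)\mu_2(m)}{\mu_2(n)\mu_1(m)} = 1.
\end{equation*}
Once this is established, the conclusion $\textbf{P}_{\mu_1}(E) = \textbf{P}_{\mu_2}(E)$ for every $E \subseteq \mathbb{N}$ follows directly by invoking Theorem \ref{th1}.

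The key algebraic observation I would make is that the expression inside the double limit factors multiplicatively. Setting $g(k) = \frac{\mu_1(k)}{\mu_2(k)}$, I would rewrite
\begin{equation*}
\frac{\mu_1(n)\mu_2(m)}{\mu_2(n)\mu_1(m)} = \frac{\mu_1(n)/\mu_2(n)}{\mu_1(m)/\mu_2(m)} = \frac{g(n)}{g(m)},
\end{equation*}
which separates the two variables cleanly. By hypothesis $g(k) \to c$ as $k \to \infty$, so as $n, m \to \infty$ independently the numerator tends to $c$ and the denominator tends to $c$. Here the assumption $c > 0$ is exactly what is needed to pass to the limit in the quotient without dividing by zero, yielding $\frac{g(n)}{g(m)} \to \frac{c}{c} = 1$.

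This completes the verification of \eqref{eq10}, and the corollary follows. There is no real obstacle in this argument; the only point requiring care is the legitimacy of taking the limit of the ratio $g(n)/g(m)$, which is precisely where the positivity hypothesis $c > 0$ enters. One could phrase the limit computation rigorously with an $\varepsilon$-argument if desired, but since $g(n) \to c > 0$ the quotient limit is standard, so I would state it directly and then cite Theorem \ref{th1} to conclude.
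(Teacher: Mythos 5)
Your proof is correct and follows exactly the route the paper intends: the corollary is stated immediately after Theorem \ref{th1} with no written proof, precisely because the factorization $\frac{\mu_1(n)\mu_2(m)}{\mu_2(n)\mu_1(m)} = \frac{\mu_1(n)/\mu_2(n)}{\mu_1(m)/\mu_2(m)} \to \frac{c}{c}=1$ (valid since $c>0$ and scaling functions are positive) reduces it to condition \eqref{eq10}. Nothing is missing.
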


Using Lemma \ref{l1} we can prove a variant of a "week converse" to Theorem \ref{th1}.

\begin{proposition}\label{p1}%{prop}
Let $\alpha \in [0,1) \cup (1, \infty]$ and let $\mu_1$ and $\mu_2$ be scaling functions. Suppose $(n_k)_{k \in \mathbb{N}}$ is a strictly increasing sequence of natural numbers such that
\begin{equation}\label{eq20}%{alfa}
\lim_{k \to \infty} \frac{\mu_1(n_{k+1}) \mu_2(n_k)}{\mu_1(n_k) \mu_2(n_{k+1})} = \alpha .
\end{equation}
Then for the set
$E=\{ n_k : k \in \mathbb{N} \}$ we have
\begin{equation}\label{eq21}%{neq}
\overline{p}_{\mu_1}(E) \neq \overline{p}_{\mu_2}(E)
\end{equation}
or
\begin{equation*}
\overline{p}_{\mu_2}(E)=1.
\end{equation*}
\end{proposition}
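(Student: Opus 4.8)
The plan is to reduce the statement to an elementary fact about the $\liminf$ of quotients by invoking Lemma \ref{l1}. Since $(n_k)_{k\in\mathbb{N}}$ is strictly increasing, $E=\{n_k:k\in\mathbb{N}\}$ is infinite and its elements listed in increasing order are precisely $n_1<n_2<\cdots$. Hence Lemma \ref{l1} applies to both scaling functions and, after an index shift, gives
\begin{equation*}
\overline{p}_{\mu_1}(E)=1-A,\qquad \overline{p}_{\mu_2}(E)=1-B,
\end{equation*}
where I write $A=\liminf_{k\to\infty}a_k$ and $B=\liminf_{k\to\infty}b_k$ with
\begin{equation*}
a_k=\frac{\mu_1(n_{k+1})}{\mu_1(n_k)},\qquad b_k=\frac{\mu_2(n_{k+1})}{\mu_2(n_k)}.
\end{equation*}
Because each $\mu_i$ is strictly decreasing, $a_k,b_k\in(0,1)$, so $A,B\in[0,1]$. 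In these terms the conclusion $\overline{p}_{\mu_1}(E)\neq\overline{p}_{\mu_2}(E)$ reads $A\neq B$, while $\overline{p}_{\mu_2}(E)=1$ reads $B=0$; and hypothesis \eqref{eq20} says exactly $\lim_{k\to\infty}a_k/b_k=\alpha$. Thus it suffices to prove the dichotomy: if $a_k/b_k\to\alpha\in[0,1)\cup(1,\infty]$, then $A\neq B$ or $B=0$.

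Next I would argue by contradiction, assuming $A=B$ and $B>0$, and split on whether $\alpha$ is finite. The technical core is the claim that for finite $\alpha$ one has $A=\alpha B$. When $\alpha\in(0,1)\cup(1,\infty)$ this follows from a squeeze: given $\varepsilon>0$, eventually $(\alpha-\varepsilon)b_k\le a_k\le(\alpha+\varepsilon)b_k$, and since $b_k>0$ the monotonicity and positive homogeneity of $\liminf$ yield $(\alpha-\varepsilon)B\le A\le(\alpha+\varepsilon)B$; letting $\varepsilon\to0$ gives $A=\alpha B$. The borderline value $\alpha=0$ I would handle directly: from $a_k=(a_k/b_k)\,b_k\le a_k/b_k\to0$ (using $b_k<1$) one gets $a_k\to0$, so $A=0=\alpha B$. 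In either finite case the standing assumption $A=B$ then forces $B=\alpha B$, i.e.\ $(\alpha-1)B=0$; since $\alpha\neq1$ this gives $B=0$, contradicting $B>0$.

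Finally, for $\alpha=\infty$ the quotient $b_k/a_k\to0$, and $b_k=(b_k/a_k)\,a_k<b_k/a_k\to0$ forces $b_k\to0$, hence $B=0$; so in this case the second alternative $\overline{p}_{\mu_2}(E)=1$ holds outright and there is nothing to assume. Combining the three cases proves the dichotomy. The main obstacle I anticipate is purely in the $\liminf$ bookkeeping: $\liminf$ does not distribute over products in general, so the identity $A=\alpha B$ must be justified by the squeeze above and genuinely uses both $b_k>0$ and the \emph{convergence} (not merely boundedness) of $a_k/b_k$. Exploiting $\alpha\neq1$ at the very end is precisely what separates the two conclusions of the proposition.
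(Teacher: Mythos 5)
Your proof is correct and takes essentially the same approach as the paper: both reduce the claim via Lemma \ref{l1} to comparing $A=\liminf_{k\to\infty}\mu_1(n_{k+1})/\mu_1(n_k)$ with $B=\liminf_{k\to\infty}\mu_2(n_{k+1})/\mu_2(n_k)$, and both extract from \eqref{eq20} the multiplicative relation between these two quantities. The only difference is cosmetic: your $\varepsilon$-squeeze yields the exact identity $A=\alpha B$ (for finite $\alpha$) followed by a contradiction, whereas the paper extracts a subsequence realizing $A$ to obtain the inequality $\alpha\le A/B$ and concludes directly, absorbing the case $\alpha=\infty$ into the same argument.
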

\begin{proof}
Let us consider first the case $\alpha \in (1, \infty]$. Write,
\begin{equation*}
p_i^* = \liminf_{k \to \infty} \frac{\mu_i(n_{k+1})}{\mu_i(n_k)} ~\text{for}~i=1,2.
\end{equation*}
For $i=1,2$ by Lemma \ref{l1} the equality $\overline{p}_{\mu_1}(E) = \overline{p}_{\mu_2}(E)$ holds if and only if $p_1^* = p_2^*$ and, moreover,
\begin{equation*}
(\overline{p}_{\mu_2}(E)=1) \Leftrightarrow (p_2^* =0).
\end{equation*}
Suppose that $p_2^* \neq 0$. To prove \eqref{eq21} it is sufficient to show that
\begin{equation}\label{eq22}%{p1bp2}
p_1^* > p_2^*.
\end{equation}
Let $(n_{k(j)})_{j \in \mathbb{N}}$ be a subsequence of $(n_k)_{k \in \mathbb{N}}$ such that
\begin{equation*}
p_1^* = \lim_{j \to \infty} \frac{\mu_1(n_{k(j)+1})}{\mu_1(n_{k(j)})}.
\end{equation*}
This equality and \eqref{eq20} imply
\begin{equation*}
\alpha = \lim_{j\to \infty} \frac{p_1^*}{\frac{\mu_2(n_{k(j)+1})}{\mu_2(n_{k(j)})}}.
\end{equation*}
Now using the conditions $p_2^* \neq 0$ and
\begin{equation*}
p_2^* \leq \lim_{j \to \infty} \frac{\mu_2(n_{k(j)+1})}{\mu_2(n_{k(j)})}
\end{equation*}
we obtain the inequality $\alpha \leq \frac{p_1^*}{p_2^*}$. The inequality $\alpha \leq \frac{p_1^*}{p_2^*}$ and $p_2^* \neq 0$ imply \eqref{eq22}, because $\alpha \in (1, \infty]$. The case $\alpha \in [0,1)$ can be considered similarly.
\end{proof}
In the following corollary the set $E$ is the same as in Proposition \ref{p1}.
\begin{corollary}
Let $\mu_1$ and $\mu_2$ be scaling functions. If limit relation \eqref{eq20} holds with $\alpha \neq 1$ and $\overline{p}_{\mu_1}(E) = \overline{p}_{\mu_2}(E)$, then the set $E$ is strongly porous w.r.t. the both scaling functions $\mu_1$ and $\mu_2$.
\end{corollary}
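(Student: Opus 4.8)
The plan is to read this off directly from Proposition \ref{p1}, whose two alternatives have been engineered so that the corollary's hypotheses rule out the first one. First I would observe that, since $\mu_1$ and $\mu_2$ take values in $(0,\infty)$, every term of the sequence
\[
\frac{\mu_1(n_{k+1})\,\mu_2(n_k)}{\mu_1(n_k)\,\mu_2(n_{k+1})}
\]
is positive, so its limit satisfies $\alpha \in [0,\infty]$. Combined with the standing assumption $\alpha \neq 1$, this places $\alpha$ in $[0,1) \cup (1,\infty]$, which is exactly the range for which Proposition \ref{p1} is stated. Hence Proposition \ref{p1} applies to the set $E = \{n_k : k \in \mathbb{N}\}$ and yields the dichotomy: either $\overline{p}_{\mu_1}(E) \neq \overline{p}_{\mu_2}(E)$, or $\overline{p}_{\mu_2}(E) = 1$.

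Next I would invoke the corollary's hypothesis $\overline{p}_{\mu_1}(E) = \overline{p}_{\mu_2}(E)$, which directly negates the first alternative of this dichotomy. Consequently the second alternative must hold, giving $\overline{p}_{\mu_2}(E) = 1$. Feeding this back into the assumed equality of the two upper porosities immediately produces $\overline{p}_{\mu_1}(E) = 1$ as well, so both upper porosities at infinity equal $1$.

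Finally I would conclude by recalling the definition: a set is strongly porous at infinity with respect to a scaling function $\mu$ precisely when $\overline{p}_{\mu}(E) = 1$. Since we have shown $\overline{p}_{\mu_1}(E) = \overline{p}_{\mu_2}(E) = 1$, the set $E$ is strongly porous at infinity with respect to both $\mu_1$ and $\mu_2$, as claimed. There is no genuine obstacle here; the entire content of the corollary is the bookkeeping of which branch of Proposition \ref{p1} survives, so the proof is essentially one line once the membership $\alpha \in [0,1)\cup(1,\infty]$ is noted.
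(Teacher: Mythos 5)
Your proof is correct and is exactly the argument the paper intends: the corollary is stated immediately after Proposition \ref{p1} with no separate proof, precisely because it follows by noting $\alpha \in [0,1)\cup(1,\infty]$ and eliminating the first branch of the proposition's dichotomy using the hypothesis $\overline{p}_{\mu_1}(E) = \overline{p}_{\mu_2}(E)$. Nothing is missing; your observation that positivity of the terms forces $\alpha \in [0,\infty]$, so that $\alpha \neq 1$ suffices to invoke the proposition, is the only detail worth spelling out, and you did.
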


Let $E$ be a subset of $\mathbb{R}^+$ and let $\mu:\mathbb{N} \to \mathbb{R}^+$ be a scaling function. Denote by $\overline{E}$ the closure of $E$ in $\mathbb{R}^+$ and define a subset $M=M_{E, \mu}$ of the set $\mathbb{N}$ by the following rule:

($\textit{i}_1$) An integer number $m \geq 2$ belongs $M$ if and only if
\begin{equation*}
\left[\mu(m+1), \mu(m-1) \right] \cap \overline{E} \neq \emptyset
\end{equation*}
where $\left[\mu(m+1), \mu(m-1) \right]= \{ x \in \mathbb{R}^+ :\mu(m+1) \leq x \leq \mu(m-1) \}$Ý;

($\textit{i}_2$) The number $1$ belongs to $M$ if and only if $[\mu(2), \infty) \cap \overline{E} \neq \emptyset$.

\begin{proposition}\label{p4}
The following conditions hold for every $E \subseteq \mathbb{R}^+$ and every scaling function $\mu:\mathbb{N} \to \mathbb{R}^+$,

($\textit{i}$) $M_{E, \mu}$ is empty if and only if $E \subseteq \{0\}$;

($\textit{ii}$) $M_{E, \mu}$ is finite if and only if $0 \notin acE$;

($\textit{iii}$) The equality $M_{E,\mu}= M_{\overline{E}, \mu}$ holds.

Moreover for every $\mu:\mathbb{N} \to \mathbb{R}^+$ and all $A, B \subseteq \mathbb{R}^+$ we have

($\textit{iv}$) If $A \subseteq B$, then the inclusion $M_{A, \mu} \subseteq M_{B, \mu}$ holds.

($\textit{v}$) The equality $M_{A \cup B, \mu}= M_{A, \mu} \cup M_{B, \mu}$ holds.
\end{proposition}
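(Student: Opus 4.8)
The plan is to isolate one elementary covering property of the intervals appearing in the definition of $M_{E,\mu}$, then to reduce every item to it together with standard facts about closures; only item ($\textit{ii}$) will require genuine care.

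First I would record the covering observation. Write $I_m := [\mu(m+1), \mu(m-1)]$ for $m \geq 2$ and $I_1 := [\mu(2), \infty)$, so that $m \in M_{E,\mu}$ means precisely $I_m \cap \overline{E} \neq \emptyset$. Since $\mu$ is strictly decreasing with $\lim_{n \to \infty}\mu(n) = 0$, consecutive intervals $I_m$ and $I_{m+1}$ overlap in $[\mu(m+1),\mu(m)]$ and the left endpoints $\mu(m+1)$ decrease to $0$; from this one checks directly that
\begin{equation*}
\bigcup_{m \geq N} I_m = (0, \mu(N-1)] \quad \text{for every } N \geq 2,
\end{equation*}
and hence that $\bigcup_{m \geq 1} I_m = (0, \infty)$. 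Thus every positive real lies in some $I_m$, while every sufficiently small positive real lies only in intervals of large index.

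The easy items follow at once. For ($\textit{iii}$) I would use $\overline{\overline{E}} = \overline{E}$, so that $M_{E,\mu}$ and $M_{\overline{E},\mu}$ are defined by intersecting the same intervals with the same set. For ($\textit{iv}$), from $A \subseteq B$ one has $\overline{A} \subseteq \overline{B}$, so $I_m \cap \overline{A} \neq \emptyset$ forces $I_m \cap \overline{B} \neq \emptyset$. For ($\textit{v}$) I would invoke the finite-union identity $\overline{A \cup B} = \overline{A} \cup \overline{B}$, whence $I_m$ meets $\overline{A \cup B}$ exactly when it meets $\overline{A}$ or $\overline{B}$. For ($\textit{i}$), if $E \subseteq \{0\}$ then $\overline{E} \subseteq \{0\}$ meets no $I_m$ (each $I_m$ consisting of strictly positive numbers), so $M_{E,\mu} = \emptyset$; conversely a point $x \in E$ with $x > 0$ lies in some $I_m$ by the covering observation, giving $m \in M_{E,\mu}$.

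The main work is ($\textit{ii}$). For the forward direction I would argue contrapositively: if $0 \in acE$ but $M_{E,\mu}$ were finite with maximum $m^{\ast}$, then every $I_m$ with $m > m^{\ast}$ would miss $\overline{E}$, so by the covering identity $(0, \mu(m^{\ast})] = \bigcup_{m > m^{\ast}} I_m$ would be disjoint from $\overline{E}$, contradicting the existence of points of $E$ arbitrarily close to $0$. For the converse, from $0 \notin acE$ I would first produce $\delta > 0$ with $E \cap (0,\delta) = \emptyset$, and then observe that $\overline{E} \cap (0,\delta) = \emptyset$ as well, since any point of $(0,\delta) \cap \overline{E}$ would be a limit of points of $E$ eventually lying in the open set $(0,\delta)$. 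Once $\mu(m-1) < \delta$, which holds for all large $m$, the inclusion $I_m \subseteq (0,\delta)$ gives $I_m \cap \overline{E} = \emptyset$, so only finitely many indices can belong to $M_{E,\mu}$. The one delicate point, which I would treat explicitly, is this passage from $E$ to $\overline{E}$ near $0$; the remainder is bookkeeping with the covering identity.
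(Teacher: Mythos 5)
Your proof is correct and takes essentially the same route as the paper's: the paper's entire proof consists of the remark that ($\textit{v}$) follows from $\overline{A \cup B}= \overline{A} \cup \overline{B}$ and that the remaining items can be derived directly from the definition of $E \mapsto M_{E,\mu}$. Your covering identity $\bigcup_{m \geq N} I_m = (0, \mu(N-1)]$, and the careful passage from $E$ to $\overline{E}$ in ($\textit{ii}$), simply supply in full and correctly the details the paper leaves to the reader (note that in your contrapositive argument for ($\textit{ii}$) the maximum $m^{\ast}$ exists because $0 \in acE$ forces $M_{E,\mu} \neq \emptyset$ by ($\textit{i}$), so there is no gap).
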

 \begin{proof}
 Property ($\textit{v}$) follows from the well-known equality
 \begin{equation*}
 \overline{A \cup B}= \overline{A} \cup \overline{B}.
 \end{equation*}
 Other properties can be derived directly from the definition of the function

 \begin{equation*}
 E \mapsto M_{E, \mu}.
 \end{equation*}
 \end{proof}

The next theorem shows that the porosity at infinity on $\mathbb{N}$ gives a relevant model for the porosity at $0$ on $\mathbb{R}^+$.

\begin{theorem}\label{th4}
Let $\mu:\mathbb{N} \to \mathbb{R}^+$ be a scaling function. Then the following statements are equivalent.

($\textit{i}$) The equality
\begin{equation}\label{eq43}
\textbf{P}(E) = \textbf{P}_{\mu}(M)
\end{equation}
holds for every $E \subseteq \mathbb{R}^+$ with $M= M_{E, \mu}$.

($\textit{ii}$) The scaling function $\mu$ satisfies the limit relation
\begin{equation}\label{eq44}
\lim_{n \to \infty} \frac{\mu(n+1)}{\mu(n)}=1.
\end{equation}
\end{theorem}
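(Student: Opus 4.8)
The plan is to prove the two implications separately, with (ii)$\Rightarrow$(i) as the substantial direction and (i)$\Rightarrow$(ii) settled by one explicit example. For (ii)$\Rightarrow$(i) I would first reduce to the case that $E$ is closed: since $\textbf{P}(E)=\textbf{P}(\overline E)$ (a consequence of Corollary \ref{c3} together with Theorem \ref{th5}, or directly of Corollary \ref{cor29} via $E\preceq\overline E$ and $\overline E\preceq E$) and $M_{E,\mu}=M_{\overline E,\mu}$ by Proposition \ref{p4}($\textit{iii}$), replacing $E$ by $\overline E$ changes neither side of \eqref{eq43}. So assume $E$ is closed and set $M:=M_{E,\mu}$ and $\mu(M)=\{\mu(m):m\in M\}\subseteq\mathbb{R}^+$. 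The crux is to establish the two relations $E\preceq\mu(M)$ and $\mu(M)\preceq E$. Granting these, Corollary \ref{cor29} gives $\textbf{P}(E)=\textbf{P}(\mu(M))$; and since condition (ii) is exactly the hypothesis \eqref{eq42} of Proposition \ref{p5}, that proposition gives $\textbf{P}(\mu(M))=\textbf{P}_\mu(M)$. Chaining the two equalities yields $\textbf{P}(E)=\textbf{P}_\mu(M)$, which is \eqref{eq43}.

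To verify $E\preceq\mu(M)$, take any $(e_n)\in\tilde E$ with $e_n\in E\setminus\{0\}$. For all large $n$ one has $e_n<\mu(1)$, so there is a unique $m_n\ge 2$ with $\mu(m_n)\le e_n<\mu(m_n-1)$; as $e_n\in[\mu(m_n+1),\mu(m_n-1)]\cap\overline E$, the index $m_n$ lies in $M$, and putting $t_n=\mu(m_n)\in\mu(M)$ we obtain $1\le e_n/t_n<\mu(m_n-1)/\mu(m_n)$. Since $e_n\to 0$ forces $m_n\to\infty$, condition (ii) squeezes the upper bound to $1$, whence $e_n/t_n\to 1$. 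For $\mu(M)\preceq E$, take $(s_n)$ with $s_n=\mu(m_n)\in\mu(M)\setminus\{0\}$ and $s_n\to 0$; then $m_n\to\infty$, and because $m_n\in M$ and $E$ is closed the set $[\mu(m_n+1),\mu(m_n-1)]\cap E$ is nonempty. Choosing $e_n$ in it gives $\mu(m_n+1)/\mu(m_n)\le e_n/s_n\le\mu(m_n-1)/\mu(m_n)$, and both bounds tend to $1$ by (ii), so $s_n/e_n\to 1$. This matching of each scale to the nearest value of $\mu$ is the only delicate point, since everything hinges on the ratios of consecutive values of $\mu$ being forced to $1$; the use of the closure of $E$ is what guarantees the required point of $E$ inside $[\mu(m_n+1),\mu(m_n-1)]$.

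For (i)$\Rightarrow$(ii) I would argue by contraposition. As $\mu$ is strictly decreasing, every ratio $\mu(n+1)/\mu(n)$ is $<1$, so failure of (ii) means $c:=\liminf_{n\to\infty}\mu(n+1)/\mu(n)<1$. Take $E=\mathbb{R}^+$. Then $\lambda(\mathbb{R}^+,h)=0$ for every $h$, so $\textbf{P}(\mathbb{R}^+)=\{0\}$, while $M_{\mathbb{R}^+,\mu}=\mathbb{N}$ because every interval $[\mu(m+1),\mu(m-1)]$ meets $\mathbb{R}^+$. Applying Lemma \ref{l1} to $\mathbb{N}=\{1,2,\dots\}$ gives $\overline p_\mu(\mathbb{N})=1-\liminf_{k\to\infty}\mu(k)/\mu(k-1)=1-c>0$, so $\textbf{P}_\mu(\mathbb{N})$ contains a positive number. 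Hence $\textbf{P}(\mathbb{R}^+)=\{0\}\ne\textbf{P}_\mu(M_{\mathbb{R}^+,\mu})$, contradicting (i); therefore (i) forces (ii).
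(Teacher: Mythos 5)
Your proof is correct and follows essentially the same route as the paper: the identical contrapositive argument for (\textit{i})$\Rightarrow$(\textit{ii}) using $E=\mathbb{R}^+$, $M_{\mathbb{R}^+,\mu}=\mathbb{N}$ and Lemma \ref{l1}, and for (\textit{ii})$\Rightarrow$(\textit{i}) the same chain through Proposition \ref{p5} and Corollary \ref{cor29} via the two relations $E\preceq\mu(M)$ and $\mu(M)\preceq E$, each verified by the same squeeze on ratios of consecutive values of $\mu$. Your explicit reduction to closed $E$ (via Proposition \ref{p4}(\textit{iii}) and $\textbf{P}(E)=\textbf{P}(\overline{E})$) is a small refinement over the paper, which defines $M$ through $\overline{E}$ but for the direction $M_{E,\mu}\preceq E$ only says ``reasoning similarly''; your version makes that step airtight without needing an extra approximation argument.
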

\begin{proof}
Let statement ($\textit{i}$) hold. If ($\textit{ii}$) is not valid, then using the decrease of $\mu$ we obtain
\begin{equation*}
\liminf_{n \to \infty} \frac{\mu(n+1)}{\mu(n)} < 1.
\end{equation*}
This inequality and Lemma \ref{l1} imply that
\begin{equation}\label{eq45}
\overline{p}_{\mu}(\mathbb{N}) >0.
\end{equation}
It is clear that the set $\mathbb{R}^+$ is nonporous, $\overline{p}(\mathbb{R}^+)=0$. From \eqref{eq43} it follows that
\begin{equation*}
\overline{p}(E)= \overline{p}_{\mu}(M)
\end{equation*}
holds for every $E \subseteq \mathbb{R}^+$. Since $M_{\mathbb{R}^+}, \mu= \mathbb{N}$ holds for every scaling function $\mu$, we obtain
\begin{equation*}
\overline{p}_{\mu}(\mathbb{N})=0,
\end{equation*}
contrary to inequality \eqref{eq45}.

Suppose now that statement ($\textit{ii}$) holds. Then by Proposition \ref{p5} we obtain the equality
\begin{equation*}
\textbf{P}_{\mu}(M)= \textbf{P}(\mu(M)).
\end{equation*}
Consequently equality \eqref{eq43} can be written in the form
\begin{equation}\label{eq46}
\textbf{P}(E) = \textbf{P}(\mu(M))
\end{equation}
where $M= M_{E, \mu}$. Equality \eqref{eq46} is trivial for $E$ satisfying the condition $0 \notin acE$. Indeed, by statement ($\textit{ii}$) of Proposition \ref{p4} the set $M_{E, \mu}$ is finite if $0 \notin acE$. Thus in this case $\textbf{P}(\mu(M))= \textbf{P}(E)= \{1\}$. Suppose $0 \in acE$. Corollary \ref{cor29} implies that \eqref{eq46} holds if $E \preceq M_{E, \mu}$ and $M_{E, \mu} \preceq E$. By the definition the statement $E \preceq M_E$ holds if for every $(e_n)_{n \in \mathbb{N}}$ with $e_n \in E \backslash \{0\}$, $n \in \mathbb{N}$ and $\lim_{n \to \infty} e_n =0$ there is a sequence $(x_n)_{n \in \mathbb{N}}$ with $x_n \in M \backslash \{0\}$ such that
\begin{equation}\label{eq47}
\lim_{n \to \infty} \frac{e_n}{x_n}=1.
\end{equation}
Using property ($\textit{i}_1$) from the definition of $M_{E, \mu}$ we see that for every sufficiently large $n \in \mathbb{N}$ there is $n(m) \in \mathbb{N}$ satisfying the statement
\begin{equation}\label{eq48}
e_n \in \left[ \mu(n(m)+1), \mu(n(m)-1) \right].
\end{equation}
Write $x_n = \mu(n(m))$. Since $\mu(n(m)+1) \leq \mu(n(m)) \leq \mu(n(m)-1)$, condition \eqref{eq48} gives us the estimations
\begin{equation*}
\frac{\mu(n(m)+1)}{\mu(n(m))} \leq \frac{e_n}{\mu(n(m))} \leq \frac{\mu(n(m)-1)}{\mu(n(m))}.
\end{equation*}
This double inequality and the limit relation
\begin{equation*}
\lim_{k \to \infty} \frac{\mu(k)}{\mu(k+1)} =1
\end{equation*}
imply \eqref{eq47}. The statement $E \preceq M_{E, \mu}$ follows. Reasoning similarly we obtain the statement $M_{E, \mu} \preceq E$.
\end{proof}

\section{The Lower Porosity At Infinity}

Define a family $\textbf{SSP}$ of sets $T \subseteq \mathbb{R}^+$ by the next rule. A set $T \subseteq \mathbb{R}^+$ belongs to $\textbf{SSP}$ if $ 0 \notin acT$ or there is a sequence $\{ (a_k, b_k) \}_{k \in \mathbb{N}}$ of open intervals $(a_k, b_k) \subseteq \mathbb{R}^+$ meeting the following conditions.

($i_1 $) The inequalities $a_k \geq b_{k+1} > a_{k+1} > 0$ hold for each $k \in \mathbb{N}$.

($i_2$) Every interval $(a_k, b_k)$ is a connected component of $ExtT$, i.e.
\begin{equation*}
(a_k, b_k) \cap T = \emptyset
\end{equation*}
but for every $(a, b) \supseteq (a_k, b_k)$ we have $\left( (a,b) \neq (a_k, b_k) \right) \Rightarrow \left( (a,b) \cap T \neq \emptyset \right)$.

($i_3$) The limit relations
\begin{equation}\label{eq33}
\lim_{k \to \infty} a_k =0, ~~ \lim_{k \to \infty} \frac{b_k - a_k}{b_k}=1 ~~\text{and}~ \lim_{k \to \infty} \frac{b_{k+1}}{a_k} =1
\end{equation}
hold.
\begin{remark}\label{r1}
The letters $\textbf{SSP}$ is merely an abbreviation for the words "super strongly porous". Every $T \in \textbf{SSP}$ is completely strongly porous in the sense of paper \cite{bd1}.
\end{remark}
\begin{theorem}\label{p3}
Let $\mu :\mathbb{N} \to \mathbb{R}$ be a scaling function. The equality
\begin{equation}\label{eq34}
\underline{p}_{\mu}(\mathbb{N}) =1
\end{equation}
holds if and only if $\mu(\mathbb{N}) \in \textbf{SSP}$.
\end{theorem}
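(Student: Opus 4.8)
The plan is to first make $\lambda_\mu(\mathbb{N},n)$ explicit and then treat the two implications separately by elementary sequence manipulations; no pretangent machinery is needed. Since $\mathbb{N}$ is the whole index set, the condition $(n^{(1)},n^{(2)})\cap\mathbb{N}=\emptyset$ with $n^{(1)}<n^{(2)}$ forces $n^{(2)}=n^{(1)}+1$, so writing $g(m)=\mu(m)-\mu(m+1)$ I get $\lambda_\mu(\mathbb{N},n)=\sup_{m\ge n} g(m)$. Because $g(m)\le\mu(m)\le\mu(n)$ for $m\ge n$, all ratios $\lambda_\mu(\mathbb{N},n)/\mu(n)$ lie in $[0,1]$, so $\underline{p}_\mu(\mathbb{N})=1$ is equivalent to $\lim_{n\to\infty}\lambda_\mu(\mathbb{N},n)/\mu(n)=1$. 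I would also record at the outset that $\mu$ strictly decreasing to $0$ makes $\mu(\mathbb{N})$ infinite with $0\in ac\,\mu(\mathbb{N})$, so only the substantive clause of $\textbf{SSP}$ is relevant, and that the bounded connected components of $\mathbb{R}^+\setminus\mu(\mathbb{N})$ are precisely the intervals $(\mu(m+1),\mu(m))$.

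For the direction $\mu(\mathbb{N})\in\textbf{SSP}\Rightarrow\underline{p}_\mu(\mathbb{N})=1$, the selected gaps must have the form $(a_k,b_k)=(\mu(m_k+1),\mu(m_k))$ with $m_1<m_2<\cdots$, and condition $(i_1)$ becomes $m_{k+1}\ge m_k+1$. The three limits of $(i_3)$ read $\mu(m_k+1)\to 0$, $\mu(m_k+1)/\mu(m_k)\to 0$, and $\mu(m_{k+1})/\mu(m_k+1)\to 1$. Given a large $n$, I would take the least $k$ with $m_k\ge n$ (so that $m_{k-1}<n$, hence $\mu(n)\le\mu(m_{k-1}+1)$) and estimate
\[
\frac{\lambda_\mu(\mathbb{N},n)}{\mu(n)}\ge\frac{g(m_k)}{\mu(n)}\ge\frac{\mu(m_k)}{\mu(m_{k-1}+1)}\cdot\frac{\mu(m_k)-\mu(m_k+1)}{\mu(m_k)}=\frac{b_k}{a_{k-1}}\cdot\frac{b_k-a_k}{b_k}.
\]
As $n\to\infty$ one has $k\to\infty$, and the two factors tend to $1$ by the third and second limits of $(i_3)$; combined with the upper bound $\le 1$ this forces the ratio to $1$.

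For the converse, the key observation is that $g(m)\le\mu(m)\to 0$, so for each $n$ the supremum defining $\lambda_\mu(\mathbb{N},n)$ is attained at some $m^{*}(n)\ge n$. From $g(m^{*}(n))/\mu(n)\to 1$ and $g(m^{*}(n))\le\mu(m^{*}(n))\le\mu(n)$ a squeeze gives $\mu(m^{*}(n))/\mu(n)\to 1$, whence $\mu(m^{*}(n)+1)/\mu(n)\to 0$ and so $\mu(m^{*}(n)+1)/\mu(m^{*}(n))\to 0$. I would then build the required gaps recursively: fix a large $n_1$, set $M_k=m^{*}(n_k)$ and $n_{k+1}=M_k+1$, and take $(a_k,b_k)=(\mu(M_k+1),\mu(M_k))$. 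Each such interval is a genuine component of the complement, the sequence $M_k$ is strictly increasing (since $M_{k+1}\ge n_{k+1}=M_k+1$) so $(i_1)$ and $(i_2)$ hold, and $(i_3)$ follows because $\mu(M_k+1)/\mu(M_k)\to 0$ yields its first two limits while the crucial third limit is exactly $\mu(M_{k+1})/\mu(M_k+1)=\mu(m^{*}(n_{k+1}))/\mu(n_{k+1})\to 1$.

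I expect the main obstacle to lie in the converse: one must notice that the supremum is attained (so that $m^{*}(n)$ exists) and, more importantly, arrange the recursion $n_{k+1}=M_k+1$ so that the third limit of $(i_3)$ comes out automatically rather than being imposed by hand. A secondary care point is uniformity in the forward direction, namely ensuring that $k=k(n)\to\infty$ as $n\to\infty$ so the limits of $(i_3)$ may legitimately be applied; this is routine once $m_k\to\infty$ is noted.
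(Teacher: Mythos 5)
Your proof is correct and takes essentially the same route as the paper's: the same identification of the gaps of $\mu(\mathbb{N})$ with consecutive pairs $(\mu(m+1),\mu(m))$, the same lower bound $\frac{b_k-a_k}{b_k}\cdot\frac{b_k}{a_{k-1}}$ in the forward direction, and in the converse the very same recursion (the paper's map $\tau$ sends $n$ to the right endpoint of the maximal gap, i.e.\ to your $m^{*}(n)+1$, so its iterates $\tau^k(N_0)$ are exactly your $n_k$). The only differences are cosmetic: the paper additionally proves uniqueness of the maximizing gap via an argument using $\underline{p}_{\mu}(\mathbb{N})>\tfrac12$, which your attainment observation shows is not needed, and it verifies the limits in ($i_3$) through the fact that $\textbf{P}_{\mu}(\mathbb{N})=\{1\}$ rather than by your direct squeeze.
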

\begin{proof}
Let $\mu(\mathbb{N}) \in \textbf{SSP}$. Then there is a sequence $\{ (a_k, b_k) \}_{k \in \mathbb{N}}$ satisfying the conditions from the definition of $\textbf{SSP}$ with $E=\mu(\mathbb{N})$.

In particular from ($i_1 $) and ($i_2 $) it follows that
\begin{equation*}
\mu(\mathbb{N}) \subseteq [ b_1, \infty) \cup^{\infty}_{k =1} [a_k, b_{k+1}].
\end{equation*}
Consequently there is $N_0 \in \mathbb{N}$ such that for every $n \geq N_0$ there is a unique $k=k(n)$ such that
\begin{equation}\label{eq35}
\mu(n) \in [b_{k+1}, a_k].
\end{equation}
Condition ($i_2$) from the definition of $\textbf{SSP}$ implies that $a_k, b_k \in \mu(\mathbb{N})$ for every $k \in \mathbb{N}$. Hence, for every $n \in \mathbb{N}$ with $n > N_0$ we have
\begin{equation}\label{eq36}
\lambda_{\mu}(\mathbb{N},n) \geq |b_{k+1}- a_{k+1}|
\end{equation}
where $\lambda_{\mu}(\mathbb{N},n)$ is defined as in \eqref{eq6}. Using inequality \eqref{eq36}, limit relations \eqref{eq33} and condition \eqref{eq35} we obtain
\begin{eqnarray*}
\underline{p}_{\mu}(\mathbb{N}) &=& \liminf_{n \to \infty} \frac{\lambda_{\mu}(\mathbb{N},n)}{\mu(n)} \\
&\geq& \liminf_{k \to \infty} \frac{b_{k+1}- a_{k+1}}{a_k} \\
&=& \liminf_{k \to \infty} \frac{b_{k+1}- a_{k+1}}{b_{k+1}} \frac{b_{k+1}}{a_k} \\
&=& \lim_{k \to \infty} \frac{b_{k+1}- a_{k+1}}{b_{k+1}} \lim_{k \to \infty} \frac{b_{k+1}}{a_k} =1.
\end{eqnarray*}
Consequently $\underline{p}_{\mu}(\mathbb{N}) \geq 1$. The inequality $\underline{p}_{\mu}(\mathbb{N}) \leq 1$ is trivial. Equality \eqref{eq34} follows.

To prove the implication
\begin{equation}\label{eq37}
\left( \underline{p}_{\mu}(\mathbb{N}) = 1 \right) \Rightarrow \left( \mu(\mathbb{N}) \in \textbf{SSP} \right)
\end{equation}
we will use the next fact. If $E \subseteq \mathbb{N}$ is infinite and $\underline{p}_{\mu}(\mathbb{N}) > \frac{1}{2}$, then there is $N_0 \in \mathbb{N}$ such that for every integer $n \geq N_0$ there are unique $n_1, n_2 \in \mathbb{N}$ satisfying the conditions $n \leq n_1 \leq n_2$, $(n_1, n_2) \cap E= \emptyset$ and
\begin{equation}\label{eq38}
\lambda_{\mu} (E,n) = |\mu(n_1)- \mu(n_2)|> \frac{1}{2} \mu(n)
\end{equation}
where $\lambda_{\mu} (E,n)$ is defined by \eqref{eq6}. Indeed, suppose the contrary and choose $m_1, m_2 \in \mathbb{N}$ such that \eqref{eq38} holds with $n_1 = m_1$ and $n_2 = m_2$. Consequently the inequality $\underline{p}_{\mu}(\mathbb{N}) > \frac{1}{2}$ implies that
\begin{equation}\label{eq39}
\frac{\mu(n_1)- \mu(n_2)}{\mu(n)} > \frac{1}{2} ~\text{and}~ \frac{\mu(m_1)-\mu(m_2)}{\mu(n)} > \frac{1}{2}.
\end{equation}
By our supposition we have
\begin{equation*}
(n_1, n_2) \neq (m_1, m_2).
\end{equation*}
Together with the equalities
\begin{equation*}
|\mu(n_1)- \mu(n_2)|= |\mu(m_1)- \mu(m_2)|= \lambda_{\mu}(E,n),
\end{equation*}
this implies that the intervals $(m_1, m_2)$ and $(n_1, n_2)$ are disjoint. Without loss of generality we may assume that
\begin{equation}\label{eq310}
n \leq n_1 < n_2 \leq m_1 < m_2.
\end{equation}
Now using \eqref{eq39} and \eqref{eq310} we obtain
\begin{eqnarray*}
\mu(n) &<& \left( \mu(n_1)- \mu(n_2) \right) + \left( \mu(m_1)- \mu(m_2) \right) \\
&=& \left( \mu(n_1)- \mu(m_2) \right) + \left( \mu(m_1)- \mu(n_2) \right) \\
&\leq&  \mu(n_1)- \mu(m_2) < \mu(n_1).
\end{eqnarray*}
Consequently the inequality $\mu(n) < \mu(n_1)$ holds, which contradicts $\mu$ being strictly decreasing. The desirable uniqueness follows.

Suppose now that $\underline{p}_{\mu}(\mathbb{N})=1$. Then there is $N_0 \in \mathbb{N}$ such that for every integer $n \geq N_0$ there is a unique interval $(n_1(n), n_2(n))$ such that \eqref{eq38} holds with $E = \mathbb{N}$.

Write $\tau(n)= n_2(n)$ for every $n \geq N_0$. Note that $n_1(n)= \tau(n)-1$ for every $n \geq N_0$ because $E= \mathbb{N}$ here. Define a sequence $\{ (a_k, b_k) \}_{k \in \mathbb{N}}$ by setting $a_1= \mu(\tau(N_0))$, $b_1= \mu(\tau(N_0)-1)$ and for $k \geq 2$, $a_k = \mu(\tau^k(N_0))$, $b_k = \mu(\tau^k(N_0)-1)$ where $\tau^k(N_0)= \tau(\tau^{k-1}(N_0))$ with $\tau^1(N_0)= \tau(N_0)$.

It remains to prove that conditions ($i_1 $)-($i_3 $) from the definition of $\textbf{SSP}$ are satisfied with $T= \mu(\mathbb{N})$ if $\{ (a_k, b_k) \}_{k \in \mathbb{N}}$ is defined as above. It is easy to see that equality \eqref{eq6} and the strict decreasing of $\mu$ imply ($i_1 $) and ($i_2 $).

Moreover the limit relation $\lim_{n \to \infty} \mu(n)=0$ yields $\lim_{k \to \infty}a_k=0$. Let us prove the equality
\begin{equation}\label{eq311}
\lim_{k \to \infty}\frac{b_k- a_k}{b_k}=1.
\end{equation}
It follows from the definitions that
\begin{equation*}
\frac{b_k- a_k}{b_k} = \frac{\mu(\tau^k(N_0)-1)- \mu(\tau^k(N_0))}{\mu(\tau^k(N_0)-1)} = \frac{\lambda_{\mu}(\mathbb{N},\tau^k(N_0)-1)}{\mu(\tau^k(N_0)-1)}.
\end{equation*}
Consequently we have
\begin{equation*}
\liminf_{k \to \infty} \frac{\lambda_{\mu}(\mathbb{N},\tau^k(N_0)-1)}{\mu(\tau^k(N_0)-1)} \in \textbf{P}_{\mu}(\mathbb{N}).
\end{equation*}
The equality $\underline{p}_{\mu}(\mathbb{N})=1$ implies that
\begin{equation*}
\textbf{P}_{\mu}(\mathbb{N})= \{ 1\}.
\end{equation*}
Hence the inequality
\begin{equation}\label{eq312}
\liminf_{k \to \infty} \frac{\lambda_{\mu}(\mathbb{N},\tau^k(N_0)-1)}{\mu(\tau^k(N_0)-1)} \geq 1
\end{equation}
holds. Moreover the inequality
\begin{equation}\label{eq313}
\limsup_{k \to \infty} \frac{\lambda_{\mu}(\mathbb{N},\tau^k(N_0)-1)}{\mu(\tau^k(N_0)-1)} \leq 1
\end{equation}
is evidently valid. Now \eqref{eq311} follows from \eqref{eq312} and \eqref{eq313}. Let us consider the last limit relation from \eqref{eq35},
\begin{equation}\label{eq314}
\lim_{k \to \infty} \frac{b_k}{a_{k-1}}=1.
\end{equation}
This equality can be written as
\begin{equation*}
\lim_{k \to \infty} \frac{\lambda_{\mu}(\mathbb{N},\tau^k(N_0)-1)}{\mu(\tau^k(N_0)-1)} =1.
\end{equation*}
Note that
\begin{equation*}
\lambda_{\mu}(\mathbb{N},n) = \mu(\tau^k(N_0)-1)- \mu(\tau^k(N_0))
\end{equation*}
for every integer $n \in [ \tau^{k-1}(N_0), \tau^k(N_0)-1 ]$. Similarly to \eqref{eq311}, we obtain the equality
\begin{equation*}
\lim_{k \to \infty} \frac{b_k - a_k}{a_{k-1}} =1.
\end{equation*}
The last equality and \eqref{eq311} directly imply \eqref{eq314}. The implication \eqref{eq37} follows.
\end{proof}

\begin{example}
Let $\mu:\mathbb{N} \to \mathbb{R}^+$ be a scaling function satisfying the limit relation
\begin{equation*}
\lim_{k \to \infty} \frac{\mu(k+1)}{\mu(k)} = 0.
\end{equation*}
It is easy to show that properties ($i_1 $)-($i_3 $) from the definitions of the family $\textbf{SSP}$ hold with $T= \mu(\mathbb{N})$ and $a_k = \mu(k+1)$, $b_k = \mu(k)$ for $k \in \mathbb{N}$. Consequently $\mu(\mathbb{N}) \in \textbf{SSP}$ and, by Theorem \ref{p3} the equality $\underline{p}_{\mu}(\mathbb{N})=1$ holds.
\end{example}

Theorem \ref{th2} claims that the equality
\begin{equation}\label{eq31}
\overline{p}(\mu(E))= \overline{p}_{\mu}(E)
\end{equation}
holds for every scaling function $\mu$ and every $E \subseteq \mathbb{N}$. Note that the equality arising from \eqref{eq31} by replacing of the upper porosity by lower porosity is generally not valid, as it follows from Theorem\ref{p2} and Proposition \ref{p6}.

\begin{proposition}\label{p6}
Let $T \in \textbf{SSP}$ and let $0 \in acT$. Then the equality
\begin{equation}\label{eq318}
\underline{p}(T) = \frac{1}{2}
\end{equation}
holds.
\end{proposition}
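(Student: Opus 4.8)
The plan is to compute the \liminf of Definition \ref{d2} directly from the geometry encoded in conditions $(i_1)$--$(i_3)$, establishing the two inequalities $\underline{p}(T) \ge \tfrac12$ and $\underline{p}(T) \le \tfrac12$ separately. Since an open interval meets $T$ if and only if it meets $\overline{T}$, one has $\lambda(T,h) = \lambda(\overline{T},h)$ for every $h$, so I may assume $a_k, b_k \in T$ for all $k$. The three limit relations \eqref{eq33} say precisely that $a_k/b_k \to 0$ (equivalently $b_k - a_k \sim b_k$) and $b_{k+1}/a_k \to 1$, from which the key asymptotic $b_{k+1} - a_{k+1} = (1+o(1))\,a_k$ follows by multiplying $\tfrac{b_{k+1}}{a_k}\cdot\tfrac{b_{k+1}-a_{k+1}}{b_{k+1}}$. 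Conditions $(i_1)$ and $(i_2)$ together with $a_k \to 0$ show that $(0,b_1]$ is partitioned into the gaps $(a_k,b_k]$ and the thin ``solid'' blocks $(b_{k+1},a_k]$, and that any connected component of the complement lying in a block $[b_{k+1},a_k]$ has length at most $a_k - b_{k+1} = o(a_k)$, hence is negligible against the principal gaps.

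For the lower bound I would fix $\varepsilon>0$ and, given small $h$, locate the index $k=k(h)$ (with $k\to\infty$ as $h\to0$) for which $h$ lies in $(a_k,b_k]$ or in $(b_{k+1},a_k]$. In the first case both $(a_k,h)$ and the full gap $(a_{k+1},b_{k+1})$ are $T$-free subintervals of $(0,h)$, so $\lambda(T,h)\ge\max\{\,h-a_k,\;b_{k+1}-a_{k+1}\,\}$; if $h-a_k\ge h/2$ we are done, and otherwise $a_k>h/2$ forces $b_{k+1}-a_{k+1}=(1+o(1))a_k>(\tfrac12-\varepsilon)h$ for large $k$. In the second case $h\le a_k$ while gap $k{+}1$ fits inside $(0,h)$, giving $\lambda(T,h)/h\ge (b_{k+1}-a_{k+1})/a_k\to1$. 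Thus $\liminf_{h\to0^+}\lambda(T,h)/h\ge\tfrac12$.

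For the upper bound I would exhibit a sequence realizing the value $\tfrac12$ by balancing a truncated large gap against the next full gap: choose $h_k\in(a_k,b_k)$ with $h_k-a_k=b_{k+1}-a_{k+1}$, which indeed lies in $(a_k,b_k)$ since $b_{k+1}-a_{k+1}\sim a_k\ll b_k$. Inside $(0,h_k)$ the two maximal $T$-free intervals $(a_k,h_k)$ and $(a_{k+1},b_{k+1})$ have equal length $b_{k+1}-a_{k+1}$, while every smaller gap and every stray component in a solid block has length $o(a_k)$; hence $\lambda(T,h_k)=b_{k+1}-a_{k+1}$ for large $k$ and $\tfrac{\lambda(T,h_k)}{h_k}=\tfrac{b_{k+1}-a_{k+1}}{a_k+(b_{k+1}-a_{k+1})}\to\tfrac12$. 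Combining the two bounds yields \eqref{eq318}.

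The main obstacle is the bookkeeping of all connected components of the complement: the definition of $\textbf{SSP}$ only asserts that the listed intervals $(a_k,b_k)$ are components, so I must rule out that unlisted components, necessarily confined to the solid blocks $[b_{k+1},a_k]$, ever dominate $\lambda(T,h)$. The relation $b_{k+1}/a_k\to1$ is exactly what guarantees that these blocks shrink fast enough for such components to have size $o(a_k)$ and hence not interfere, so pinning down that estimate uniformly in $k$ is the delicate point; once it is in hand, both the worst-case balancing for the upper bound and the case split for the lower bound are routine.
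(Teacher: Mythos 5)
Your proof is correct, but it takes a genuinely different route from the paper's. The paper never does the component bookkeeping on $T$ that you identify as the delicate point: instead it replaces $T$ by the model set $A=\{a_1,a_2,\dots\}$ of left endpoints of the gaps, notes that $a_{k+1}/a_k\to 0$ and that $T\preceq A$ and $A\preceq T$ in the sense of Definition \ref{dkck}, and then invokes Corollary \ref{cor29} (which rests on the pretangent-space characterization of porosity numbers, Theorem \ref{th5}, via Proposition \ref{p11}) to get $\textbf{P}(T)=\textbf{P}(A)$, hence $\underline{p}(T)=\underline{p}(A)$. For the simple set $A$ the complement components are exactly the intervals $(a_{k+1},a_k)$, so the paper can write $\lambda(A,h)$ in closed form on each $[a_k,a_{k-1}]$ and minimize $\lambda(A,h)/h$ there, the minimum being attained at $h=2a_k-a_{k+1}$ with value $\frac{a_k-a_{k+1}}{2a_k-a_{k+1}}\to\frac12$. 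Your balancing choice $h_k=a_k+(b_{k+1}-a_{k+1})\approx 2a_k$ is the exact analogue of that minimizer, and your case split for the lower bound mirrors the paper's monotonicity analysis of $f(h)=\lambda(A,h)/h$; the difference is that you control the unlisted components of $ExtT$ directly, confining them to the solid blocks $[b_{k+1},a_k]$ via condition ($i_2$) and estimating their lengths by $a_k-b_{k+1}=o(a_k)$ via ($i_3$). What your approach buys is self-containedness: it uses only Definitions \ref{d1}, \ref{d2} and the \textbf{SSP} axioms, with no appeal to the Section 2 machinery. What the paper's approach buys is a shorter computation on a cleaner set and a reusable reduction — the same explicit formula reappears in the proof of Proposition \ref{p7}, where \eqref{eq412} is obtained ``as in the proof of Proposition \ref{p6}.''
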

\begin{proof}
If $(a_n)$ is a sequence from the definition of $\textbf{SSP}$, then
\begin{equation*}
A= \{ a_1,..., a_k, a_{k+1},... \}
\end{equation*}
satisfies the conditions:
\begin{itemize}
  \item $a_k > a_{k+1} > 0$ for every $k \in \mathbb{N}$;
  \item $\lim_{k \to \infty} \frac{a_{k+1}}{a_k}=0$;
  \item $T \preceq A$ and $A \preceq T$
\end{itemize}
where the symbol $\preceq$ is understood in accordance with Definition \ref{dkck}. Using Corollary \ref{cor29} it is easy to see that \eqref{eq318} holds if and only if we have the equality
\begin{equation}\label{eq319}
\underline{p}(A)= \frac{1}{2}.
\end{equation}
Let us prove \eqref{eq319}. Let $(h_n)_{n \in \mathbb{N}}$ be a decreasing sequence of positive numbers such that
\begin{equation}\label{eq320}
\underline{p}(A)= \lim_{h \to \infty}\frac{\lambda(A, h_n)}{h}
\end{equation}
and $h_1 \leq a_2$. Then for every $n \in \mathbb{N}$ there is a unique $k= k(n) \in \mathbb{N}$ such that
\begin{equation*}
h_k \in [a_k, a_{k+1}].
\end{equation*}
Let us consider the function
\begin{equation*}
f(h)= \frac{\lambda(A,h)}{h}
\end{equation*}
on the interval $[a_k, a_{k-1}]$. The equality $\lim_{k \to \infty}\frac{t_{k+1}}{t_k}=0$ implies that
\begin{equation}\label{eq321}
|a_{k+1}- a_k| \leq |a_k- a_{k-1}|
\end{equation}
for sufficiently large $k$.

Using \eqref{eq321} it is easy to show that

\begin{equation*}
\lambda(A,h)=\left\{
\begin{array}{cc}
a_k-a_{k+1}, & h \in [a_k, a_k+(a_k- a_{k+1})] \\
h-a_k, &  \quad h \in [a_k+(a_k- a_{k+1}), a_{k-1}].~\ \ \ \ \ ~\ \ \ \ \ ~\
\end{array}%
\right.
\end{equation*}
Consequently we have

\begin{equation*}
f(n)=\left\{
\begin{array}{cc}
\frac{a_k- a_{k+1}}{h}, & h \in [a_k, 2a_k-a_{k+1}] \\
\frac{h-a_k}{h}, &  \quad h \in [2a_k- a_{k+1}, a_{k-1}].~\ \ \ \ \ ~\ \ \ \ \ ~\
\end{array}%
\right.
\end{equation*}

It implies that $f$ is decreasing on $[a_k, 2a_k - a_{k+1}]$ and increasing on $[2a_k- a_{k+1}, a_{k-1}]$. Hence we obtain
\begin{equation}\label{eq322}
\min_{h \in [a_k, a_{k-1}]}f(h)= f(2a_k - a_{k+1})= \frac{a_k - a_{k+1}}{2a_k - a_{k+1}}.
\end{equation}
Using \eqref{eq322} and the equality
\begin{equation*}
\lim_{k \to \infty} \frac{a_{k+1}}{a_k}=0
\end{equation*}
we can find that
\begin{equation*}
\underline{p}(A) \geq \lim_{k \to \infty} \frac{a_k - a_{k+1}}{2a_k - a_{k+1}}= \frac{1}{2}.
\end{equation*}
Since $\lim_{k \to \infty} \frac{a_k- a_{k+1}}{2a_k- a_{k+1}}= \lim_{k \to \infty}\frac{\lambda(A, 2a_k-a_{k+1})}{2a_k- a_{k+1}}$, the number $\frac{1}{2}$ is a porosity number of $A$. Hence the inequality $\frac{1}{2} \geq \underline{p}(A)$ holds. Equality \eqref{eq319} follows.
\end{proof}

\begin{corollary}\label{p2}
Let $E \subseteq \mathbb{R}^+$ and let $0$ be an accumulation point of $E$. Then the inequality
\begin{equation}\label{eq32}
\underline{p}(E) \leq \frac{1}{2}
\end{equation}
holds.
\end{corollary}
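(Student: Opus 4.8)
The plan is to reduce the statement to Proposition \ref{p6} by passing to a conveniently chosen sequence inside $E$. Since $0$ is an accumulation point of $E$, I can build by induction a strictly decreasing sequence $(a_k)_{k \in \mathbb{N}}$ of points of $E \setminus \{0\}$ with $a_k \to 0$ and, in addition, $\frac{a_{k+1}}{a_k} \to 0$: having chosen $a_k$, the set $E \cap (0, a_k/(k+1))$ is nonempty because $0 \in acE$, so I pick $a_{k+1}$ in it, which forces $a_{k+1}/a_k < 1/(k+1)$. Put $A = \{a_k : k \in \mathbb{N}\} \subseteq E$.

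The first key step is a monotonicity remark: since $A \subseteq E$, every open interval that contains no point of $E$ also contains no point of $A$, so the largest $E$-free subinterval of $(0,h)$ is no longer than the largest $A$-free one, i.e. $\lambda(E,h) \le \lambda(A,h)$ for every $h > 0$. Dividing by $h$ and taking $\liminf_{h \to 0^+}$ yields $\underline{p}(E) \le \underline{p}(A)$. Thus it suffices to prove $\underline{p}(A) \le \frac{1}{2}$.

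The second key step identifies $A$ as a super strongly porous set. Viewing $A$ as $\mu(\mathbb{N})$ for the scaling function $\mu(k) = a_k$, the condition $a_{k+1}/a_k \to 0$ is exactly the hypothesis of the Example following Theorem \ref{p3}; there the complementary intervals $(a_{k+1}, a_k)$ are shown to verify conditions ($i_1$)--($i_3$), so $A = \mu(\mathbb{N}) \in \textbf{SSP}$. Since $0 \in acA$, Proposition \ref{p6} gives $\underline{p}(A) = \frac{1}{2}$, and combining with the previous step we obtain $\underline{p}(E) \le \underline{p}(A) = \frac{1}{2}$, which is \eqref{eq32}.

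The routine points are the induction and the monotonicity inequality, neither of which carries real difficulty. The only place that deserves care — and what I would regard as the crux — is checking that the chosen sequence genuinely lands in $\textbf{SSP}$, that is, that the fast-decrease condition $a_{k+1}/a_k \to 0$ is both attainable inside an arbitrary $E$ with $0 \in acE$ and strong enough to force the three limit relations in ($i_3$). Once the sequence is recognized as an instance of the Example after Theorem \ref{p3}, Proposition \ref{p6} supplies the rest, so no independent computation of $\underline{p}(A)$ is needed.
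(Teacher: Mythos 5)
Your proof is correct and takes essentially the same route as the paper: both arguments produce a subset $T \subseteq E$ with $T \in \textbf{SSP}$ and $0 \in acT$, apply Proposition \ref{p6} to get $\underline{p}(T) = \frac{1}{2}$, and conclude via the monotonicity $\lambda(E,h) \leq \lambda(T,h)$, hence $\underline{p}(E) \leq \underline{p}(T)$. The only difference is that the paper merely asserts the existence of such a subset, while you construct it explicitly (choosing $a_{k+1} \in E \cap (0, a_k/(k+1))$ and invoking the Example after Theorem \ref{p3}), which is a detail worth having.
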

\begin{proof}
Since $0 \in acE$, there is a subset $T$ of $E$ such that $T \in \textbf{SSP}$ and $0 \in acT$. By Proposition \ref{p6} the equality $\underline{p}(T)= \frac{1}{2}$ holds. The inclusion $T \subseteq E$ implies that
\begin{equation*}
\lambda(E,h) \leq \lambda(T,h)
\end{equation*}
for every $h > 0$. Consequently we have the inequality
\begin{equation*}
\underline{p}(E) \leq \underline{p}(T).
\end{equation*}
This inequality and the equality $\underline{p}(T)= \frac{1}{2}$ give us \eqref{eq32}.
\end{proof}

\begin{definition}\label{d4}
A real valued sequence $(a_n)_{n \in \mathbb{N}}$ is eventually concave if the inequality
\begin{equation}\label{eq49}
\frac{a_{n-1}+ a_{n+1}}{2} \geq a_n
\end{equation}
holds for all sufficiently large $n$.
\end{definition}
\begin{proposition}\label{p7}
Let a scaling function $\mu$ be eventually concave. Then we have the equalities
\begin{equation}\label{eq410}
\underline{p}(\mu(\mathbb{N}))= \frac{\underline{p}_{\mu}(\mathbb{N})}{1+ \underline{p}_{\mu}(\mathbb{N})}
\end{equation}
and
\begin{equation}\label{eq411}
\underline{p}_{\mu}(\mathbb{N})= 1- \limsup_{n \to \infty}\frac{\mu(n+1)}{\mu(n)}.
\end{equation}
\end{proposition}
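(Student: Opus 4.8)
The plan is to prove \eqref{eq411} first and then deduce \eqref{eq410} from it by computing the classical lower porosity of $\mu(\mathbb{N})$ at $0$ directly.

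First I would unwind what eventual concavity means for the gaps $d_k := \mu(k) - \mu(k+1) > 0$. Inequality \eqref{eq49} with $a_n = \mu(n)$ reads $\mu(n-1) - \mu(n) \geq \mu(n) - \mu(n+1)$, i.e. $d_{n-1} \geq d_n$ for all large $n$, so the gaps $(d_k)$ are eventually nonincreasing. Since here $E = \mathbb{N}$, the requirement $(n^{(1)}, n^{(2)}) \cap \mathbb{N} = \emptyset$ in \eqref{eq6} forces $n^{(2)} = n^{(1)} + 1$, whence $\lambda_{\mu}(\mathbb{N}, n) = \sup_{k \geq n} d_k$. For $n$ large this supremum is attained at $k = n$, so $\lambda_{\mu}(\mathbb{N}, n) = \mu(n) - \mu(n+1)$. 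Dividing by $\mu(n)$ and passing to the liminf yields
\[
\underline{p}_{\mu}(\mathbb{N}) = \liminf_{n\to\infty}\left(1 - \frac{\mu(n+1)}{\mu(n)}\right) = 1 - \limsup_{n\to\infty}\frac{\mu(n+1)}{\mu(n)},
\]
which is exactly \eqref{eq411}.

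For \eqref{eq410} I would compute $\lambda(\mu(\mathbb{N}), h)$ block by block. Writing $E := \mu(\mathbb{N})$ and, for small $h$, letting $m = m(h)$ be the least integer with $\mu(m) < h$, we have $h \in (\mu(m), \mu(m-1)]$ and the points of $E$ in $(0,h)$ are exactly $\mu(m), \mu(m+1), \dots$. The candidate empty intervals are the top interval $(\mu(m), h)$, of length $h - \mu(m)$, and the interior gaps $(\mu(k+1), \mu(k))$ of length $d_k$ for $k \geq m$; by eventual concavity the largest interior gap is $d_m$, so $\lambda(E, h) = \max\bigl(h - \mu(m),\, d_m\bigr)$. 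On the block $(\mu(m), \mu(m-1)]$ the quotient $\lambda(E,h)/h$ equals $d_m/h$ (decreasing in $h$) as long as $h - \mu(m) \leq d_m$ and equals $1 - \mu(m)/h$ (increasing in $h$) afterwards, so its minimum over the block is attained at $h^{*} = 2\mu(m) - \mu(m+1) = \mu(m) + d_m$; this point lies inside the block since $h^{*} > \mu(m)$ and $h^{*} \leq \mu(m-1)$ precisely because $d_m \leq d_{m-1}$. The minimal value is
\[
\frac{d_m}{2\mu(m) - \mu(m+1)} = \frac{1 - q_m}{2 - q_m}, \qquad q_m := \frac{\mu(m+1)}{\mu(m)} \in (0,1).
\]

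Finally I would pass to the liminf as $h \to 0^{+}$. Since every $\lambda(E,h)/h$ is bounded below by the minimum on its block and each block minimum is realized at $h^{*}$, this liminf coincides with $\liminf_m (1-q_m)/(2-q_m)$. As $\phi(q) = (1-q)/(2-q)$ is continuous and strictly decreasing on $[0,1]$, it follows that $\underline{p}(\mu(\mathbb{N})) = \phi\bigl(\limsup_m q_m\bigr)$; substituting $\limsup_m q_m = 1 - \underline{p}_{\mu}(\mathbb{N})$ from \eqref{eq411} turns this into $\dfrac{\underline{p}_{\mu}(\mathbb{N})}{1 + \underline{p}_{\mu}(\mathbb{N})}$, establishing \eqref{eq410}. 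The main obstacle I anticipate is the careful bookkeeping behind the identity $\lambda(E,h) = \max(h - \mu(m), d_m)$ and the verification that $h^{*}$ falls inside its block: this is exactly where eventual concavity is indispensable, since without $d_{n-1} \geq d_n$ neither the reduction $\lambda_{\mu}(\mathbb{N},n) = d_n$ nor the single-minimum structure of $\lambda(E,h)/h$ on each block would hold.
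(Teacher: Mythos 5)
Your proof is correct and follows essentially the same route as the paper: equality \eqref{eq411} via the observation that eventual concavity forces $\lambda_{\mu}(\mathbb{N},n)=\mu(n)-\mu(n+1)$ for large $n$, and equality \eqref{eq410} via the block-by-block minimization of $\lambda(\mu(\mathbb{N}),h)/h$ with minimum $\frac{\mu(m)-\mu(m+1)}{2\mu(m)-\mu(m+1)}$ attained at $h^{*}=2\mu(m)-\mu(m+1)$, which is exactly the computation the paper imports from its Proposition \ref{p6} (where the paper's parenthetical remark about inequality \eqref{eq321} plays the role of your verification that $h^{*}$ stays inside its block). Your write-up is simply more self-contained, and it even sidesteps a small typo in the paper's final algebraic manipulation, where $1-\frac{1}{1-\underline{p}_{\mu}(\mathbb{N})}$ should read $1-\frac{1}{1+\underline{p}_{\mu}(\mathbb{N})}$.
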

\begin{proof}
Let us prove \eqref{eq411} by analogy with the proof of the Lemma \ref{l1}. It is clear that
\begin{equation*}
1- \limsup_{n \to \infty} \frac{\mu(n+1)}{\mu(n)} = \liminf_{n \to \infty} \frac{\mu(n)- \mu(n+1)}{\mu(n)}.
\end{equation*}
Since $\mu$ is eventually concave, we have
\begin{equation*}
|\mu(n+1)-\mu(n)| \geq |\mu(n+2)- \mu(n+1)|
\end{equation*}
for all sufficiently large $n$. Consequently the equality
\begin{equation*}
\lambda_{\mu}(\mathbb{N},n)= \mu(n)- \mu(n+1)
\end{equation*}
holds for sufficiently large $n$. Hence
\begin{equation*}
\liminf_{n \to \infty} \frac{\mu(n)- \mu(n+1)}{\mu(n)}= \liminf_{n \to \infty} \frac{\lambda_{\mu}(\mathbb{N},n)}{\mu(n)}= \underline{p}_{\mu}(\mathbb{N}).
\end{equation*}
Equality \eqref{eq411} follows.

Let us prove \eqref{eq410}. As in the proof of Proposition \ref{p6} we can find that

\begin{equation}\label{eq412}
\underline{p}(\mu(\mathbb{N}))= \liminf_{k \to \infty} \frac{\mu(k)- \mu(k+1)}{2\mu(k)- \mu(k+1)}.
\end{equation}
(Note that inequality \eqref{eq321} holds if and only if the sequence $(a_n)_{n \in \mathbb{N}}$ from the proof of Proposition \ref{p6} is concave.) Using \eqref{eq412} and \eqref{eq411} we obtain
\begin{eqnarray*}
\underline{p}(\mu(\mathbb{N})) &=& 1- \limsup_{k \to \infty} \frac{\mu(k)}{2\mu(k)- \mu(k+1)} \\
&=& 1- \frac{1}{2- \limsup_{k \to \infty}\frac{\mu(k+1)}{\mu(k)}} = 1- \frac{1}{1- \underline{p}_{\mu}(\mathbb{N})},
\end{eqnarray*}
that implies \eqref{eq410}.
\end{proof}

The closing result of this section is the following infinitesimal characterization of super strongly porous sets.

\begin{theorem}\label{teo540}
Let $0 \in E \subseteq \mathbb{R}^+$. Then $E \in \textbf{SSP}$ if and only if the inequality
\begin{equation}\label{eq541}
card(\Omega^{E}_{0, \tilde{r}}) \leq  2
\end{equation}
holds for every pretangent space $\Omega^{E}_{0, \tilde{r}}$.
\end{theorem}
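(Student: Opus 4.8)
The plan is to translate the cardinality condition into a statement about mutual ratios of small points of $E$, and then to read both implications off the definition of $\textbf{SSP}$. First I would dispose of the trivial case: if $0\notin acE$ then $E\in\textbf{SSP}$ by definition and every space equals $\{0\}$, so I may assume $0\in acE$; moreover, since $\overline{\Omega}^{E}_{0,\tilde r}=\overline{\Omega}^{\overline E}_{0,\tilde r}$ by Corollary \ref{c3} while membership in $\textbf{SSP}$ is unaffected by passing to the closure (the maximal gaps of $E$ and of $\overline E$ near $0$ coincide, and their endpoints lie in $\overline E$), I may assume $E$ is closed. Identifying $\Omega^{E}_{0,\tilde r}$ with $\overline{\Omega}^{E}_{0,\tilde r}\subseteq\mathbb{R}^+$ via Proposition \ref{p10} and using Corollary \ref{cor37}, the points of $\overline{\Omega}^{E}_{0,\tilde r}$ are exactly the finite values $\lim_{n\to\infty}x_n/r_n$ with $(x_n)\in\tilde E$, and $0$ is always among them. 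The elementary but crucial observation is that some pretangent space has at least three points if and only if there are sequences $(x_n),(x'_n)$ in $E\setminus\{0\}$ with $x'_n\to 0$ and
\begin{equation*}
\lim_{n\to\infty}\frac{x_n}{x'_n}=c\in(0,1).
\end{equation*}
For the reverse implication one takes the normalizing sequence $\tilde r=(x'_n)$, so that $0,c,1\in\overline{\Omega}^{E}_{0,\tilde r}$ by Corollary \ref{cor37}; conversely, two positive points $s<s'$ of a space produce such a ratio with $c=s/s'$. Thus the condition ``$\mathrm{card}(\Omega^{E}_{0,\tilde r})\le 2$ for every $\tilde r$'' is equivalent to saying that all finite subsequential limits of ratios of points of $E$ near $0$ lie in $\{0,1,\infty\}$.

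The forward implication $E\in\textbf{SSP}\Rightarrow\mathrm{card}\le 2$ is then immediate. As in the proof of Proposition \ref{p6}, the sequence $A=\{a_k\}$ from the definition of $\textbf{SSP}$ satisfies $a_{k+1}/a_k\to 0$, and every small point $x\in E$ lies in some $[b_{k+1},a_k]$, whence $x/a_{k(x)}\to 1$. Consequently any convergent ratio $x_n/x'_n$ of small points of $E$ is asymptotic to $a_{k_n}/a_{k'_n}$; if $k_n=k'_n$ the limit is $1$, and if $k_n>k'_n$ then $a_{k_n}/a_{k'_n}\le a_{k'_n+1}/a_{k'_n}\to 0$, so the limit is $0$ (symmetrically $\infty$). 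Hence all such limits lie in $\{0,1,\infty\}$ and the cardinality bound holds.

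For the converse I would argue by contraposition through a cluster decomposition, and this is where the real work lies. Assume all ratio limits lie in $\{0,1,\infty\}$; I must construct the intervals $(a_k,b_k)$ of the definition of $\textbf{SSP}$. Since the set of ratio limits is closed in $[0,\infty]$ and misses the compact set $[1/3,2/3]$, a finite-intersection argument gives $\delta_0>0$ such that no two points $x<x'$ of $E\cap(0,\delta_0)$ satisfy $x/x'\in[1/3,2/3]$. The key device is a transitivity trick: declare $x\asymp x'$ when $\min(x/x',x'/x)>2/3$; if $x<x'<x''$ with $x/x'>2/3$ and $x'/x''>2/3$ then $x/x''>4/9>1/3$, so the ratio gap forces $x/x''>2/3$. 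Thus $\asymp$ is an equivalence relation on $E\cap(0,\delta_0)$ whose classes (clusters) satisfy $C\subseteq(\tfrac{2}{3}\sup C,\sup C]$, while distinct clusters have top-ratio $<1/3$. Enumerating the clusters by decreasing supremum as $C_1,C_2,\dots$ (they are multiplicatively $3$-separated, hence form a sequence with $\sup C_k\to 0$ because $0\in acE$), I set $\alpha_k=\sup C_k$, $\beta_k=\inf C_k$ (both in $E$ as $E$ is closed), and define $a_k=\alpha_{k+1}$, $b_k=\beta_k$. Then $(a_k,b_k)$ has endpoints in $E$ and empty intersection with $E$, so it is a connected component of the complement, and conditions $(i_1)$, $(i_2)$ follow from $\alpha_{k+1}<\alpha_k/3<\beta_k$.

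It remains to verify the three limit relations $(i_3)$, and here I would invoke the full strength of the hypothesis: for each $\eta\in(0,1/3)$ there is $\delta(\eta)$ so that all ratios of points of $E\cap(0,\delta(\eta))$ avoid $[\eta,1-\eta]$. For a cluster with $\alpha_k<\delta(\eta)$, within-class ratios exceed $2/3>\eta$ and so must exceed $1-\eta$, giving $\beta_k/\alpha_k\ge 1-\eta$; the between-class ratio $\alpha_{k+1}/\alpha_k<1/3$ must then drop below $\eta$, whence $\alpha_{k+1}/\beta_k<\eta/(1-\eta)$. Letting $\eta\to 0$ yields
\begin{equation*}
\lim_{k\to\infty}\frac{b_k-a_k}{b_k}=\lim_{k\to\infty}\Bigl(1-\frac{\alpha_{k+1}}{\beta_k}\Bigr)=1,\qquad \lim_{k\to\infty}\frac{b_{k+1}}{a_k}=\lim_{k\to\infty}\frac{\beta_{k+1}}{\alpha_{k+1}}=1,
\end{equation*}
together with $a_k=\alpha_{k+1}\to 0$, so $E\in\textbf{SSP}$. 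The main obstacle is precisely this cluster construction: extracting a genuine equivalence relation from the ratio gap via the transitivity computation, and then checking that the clusters simultaneously tighten ($\beta_k/\alpha_k\to 1$) and separate ($\alpha_{k+1}/\beta_k\to 0$) fast enough to produce all three relations in $(i_3)$; everything else is bookkeeping with Corollary \ref{cor37} and the reformulation above.
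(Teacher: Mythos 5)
Your proof is correct, but it takes a genuinely different route from the paper's. The paper does not argue directly: it first reduces $\textbf{SSP}$ to the class $\textbf{CSP}$ of completely strongly porous sets via Lemma \ref{l543} ($E \in \textbf{SSP}$ iff $E \in \textbf{CSP}$ and $M(\tilde{L})=1$), and then invokes Lemma \ref{l545} --- imported from Theorem 4.6 and formula (4.11) of \cite{bdk} --- which characterizes $\textbf{CSP}$-sets by the finiteness of $R^{*}(\mathbf{^1\Omega^{E}_{0}})$ and the positivity of $R_{*}(\mathbf{^1\Omega^{E}_{0}})$ over the family of pretangent spaces containing $1$; the bound $card(\Omega^{E}_{0,\tilde{r}}) \leq 2$ forces each such space to be $\{0,1\}$, so $R^{*}=R_{*}=1$, and Lemma \ref{l544} closes the loop. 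This is precisely the ``additional results related to pretangent spaces'' that the introduction flags as making this the one non-self-contained proof in the paper. You instead work from scratch: you translate the cardinality condition into the statement that all subsequential limits of ratios of points of $E$ tending to $0$ lie in $\{0,1,\infty\}$ (a correct use of Corollary \ref{cor37} and Proposition \ref{p10}), read the forward implication off the block structure $[b_{k+1},a_k]$ of an $\textbf{SSP}$-set, and for the converse build the intervals by hand, using the ratio gap to make the relation $\min(x/x',x'/x)>2/3$ transitive (the $4/9>1/3$ computation) and then sharpening the gap to $[\eta,1-\eta]$ to extract all three limit relations \eqref{eq33}. I checked the details: the compactness arguments producing $\delta_0$ and $\delta(\eta)$, the attainment of $\sup C_k$ and $\inf C_k$ in the closed set $E$ (your reduction to closed $E$ is legitimate, since an open interval disjoint from $E$ is disjoint from $\overline{E}$, so both the $\textbf{SSP}$ property and, by Corollary \ref{c3}, the pretangent spaces are unaffected), the enumeration of clusters in decreasing order, and the passage from within-cluster tightness $\beta_k/\alpha_k>1-\eta$ and between-cluster separation $\alpha_{k+1}/\beta_k<\eta/(1-\eta)$ to ($i_3$) all go through. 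The trade-off: the paper's proof is a few lines long but leans on the external machinery of \cite{bd1} and \cite{bdk} (and gets quantitative information, $R^{*}=M(\tilde{L})$, as a bonus), whereas yours is longer but elementary and self-contained, in effect reproving from first principles the special case $M(\tilde{L})=1$ of that machinery.
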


A relatively simple proof of this theorem can be obtained if we use the corresponding result for completely strongly porous at $0$ subsets of $\mathbb{R}^+$. In what follows the set of such subsets will be denoted by $\textbf{CSP}$. Several different characterizations of $\textbf{CSP}$-sets have been found in \cite{bd1}. In particular using Theorem 27, Theorem 42 and Definition 22 from \cite{bd1} we can give the definition of \textbf{CSP}-sets in the next form.

\begin{definition}\label{def541}
Let $E \subseteq \mathbb{R}^+$ and $0 \in acE$. Then $E$ is a $\textbf{CSP}$-set if there is a sequence $\tilde{L}=((a_n, b_n))_{n \in \mathbb{N}}$ satisfying the following conditions.

($\textit{i}_{1}$) Every interval $(a_n, b_n)$ is a connected component of $ExtE$.

($\textit{i}_{2}$) The inequalities $a_k \geq b_{k+1} > a_{k+1} > 0$ holds for each $n \in \mathbb{N}$.

($\textit{i}_{3}$) The upper limit
\begin{equation}\label{eq542}
M(\tilde{L}):= \limsup_{n \to \infty} \frac{a_n}{b_{n+1}}
\end{equation}
is finite.
\end{definition}
Now directly from the definitios of $\textbf{SSP}$-sets and $\textbf{CSP}$-sets we have the following.

\begin{lemma}\label{l543}
Let $E \subseteq \mathbb{R}^+$ and $0 \in acE$. Then $E$ is a $\textbf{SSP}$-set if and only if $E$ is a $\textbf{CSP}$-set and $M(\tilde{L})=1$ where $M(\tilde{L})$ is defined by \eqref{eq542}.
\end{lemma}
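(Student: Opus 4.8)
The plan is to prove Lemma~\ref{l543} by a direct term-by-term comparison of the two definitions. Under the standing hypothesis $0\in acE$ both the definition of $\textbf{SSP}$ and Definition~\ref{def541} fall into their non-degenerate branch and supply a sequence of open intervals $((a_n,b_n))_{n\in\mathbb{N}}$. I would first record that the structural clauses agree: clause ($i_2$) of Definition~\ref{def541} is literally clause ($i_1$) of the $\textbf{SSP}$ definition (the ordering $a_k\ge b_{k+1}>a_{k+1}>0$), and clause ($i_1$) of Definition~\ref{def541} is clause ($i_2$) of the $\textbf{SSP}$ definition (each $(a_n,b_n)$ a connected component of $ExtE$). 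Consequently the \emph{same} sequence $\tilde L=((a_n,b_n))$ serves on both sides, so the whole content of the lemma reduces to comparing the metric clauses: the three limit relations in ($i_3$) of the $\textbf{SSP}$ definition against the finiteness of $M(\tilde L)$ together with the equality $M(\tilde L)=1$.

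The heart of the argument is the equivalence between the spacing limit $\lim_{k\to\infty} b_{k+1}/a_k=1$ and $M(\tilde L)=1$. Here I would use the ordering $b_{n+1}\le a_n$, which forces $a_n/b_{n+1}\ge 1$ for every $n$; hence $M(\tilde L)=\limsup_{n\to\infty}a_n/b_{n+1}\ge 1$ always. If $E$ is $\textbf{SSP}$, then $\lim b_{n+1}/a_n=1$ gives $\lim a_n/b_{n+1}=1$, so $\limsup a_n/b_{n+1}=1$, i.e. $M(\tilde L)=1$ (in particular $M(\tilde L)<\infty$, so the finiteness clause holds). Conversely, if $M(\tilde L)=1$, then since every term satisfies $a_n/b_{n+1}\ge 1$ we get $\liminf a_n/b_{n+1}\ge 1$, whence $\lim a_n/b_{n+1}=1$ and therefore $\lim b_{n+1}/a_n=1$. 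This single observation, that the terms are pinned from below by $1$, converts the one-sided $\limsup$ condition into the two-sided limit appearing in the $\textbf{SSP}$ definition.

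It then remains to account for the other two $\textbf{SSP}$ limits, $\lim a_n=0$ and $\lim (b_n-a_n)/b_n=1$ (equivalently $\lim a_n/b_n=0$). The gap-size relation $\lim a_n/b_n=0$ is the strong-porosity clause imposed on the gaps in both definitions, so I would match it directly. The relation $\lim a_n=0$ is then not independent: writing $a_{n+1}/a_n=(a_{n+1}/b_{n+1})(b_{n+1}/a_n)$ and using $\lim a_{n+1}/b_{n+1}=0$ together with the already-established $\lim b_{n+1}/a_n=1$, one gets $\lim a_{n+1}/a_n=0$; since $(a_n)$ is a positive decreasing sequence this forces $\lim a_n=0$. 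Thus $\lim a_n=0$ is redundant given the other two limits and can be recovered when passing from $\textbf{CSP}$ with $M(\tilde L)=1$ to $\textbf{SSP}$.

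The main obstacle I anticipate is the careful pairing of the metric clauses, and in particular making the role of $M(\tilde L)=1$ precise: the finiteness clause of Definition~\ref{def541} only bounds the relative thickness of the $\overline E$-blocks $[b_{n+1},a_n]$ between consecutive gaps, whereas the $\textbf{SSP}$ definition demands these blocks be asymptotically negligible, $b_{n+1}/a_n\to1$. The content of the lemma is exactly that upgrading $M(\tilde L)<\infty$ to $M(\tilde L)=1$ supplies this negligibility, and the lower bound $a_n/b_{n+1}\ge 1$ is what makes the upgrade automatic rather than requiring any further hypothesis. Once this is isolated, both implications follow by reading off the remaining shared clauses.
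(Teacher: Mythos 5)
Your proposal takes the same route as the paper, which in fact prints no argument at all for this lemma (it is asserted to follow ``directly from the definitions''); your text is essentially the verification the authors left implicit. The crux is handled correctly: the ordering $a_n \geq b_{n+1}$ pins every ratio $a_n/b_{n+1}$ below by $1$, so $M(\tilde{L}) \geq 1$ always, and therefore $M(\tilde{L})=1$ automatically upgrades the one-sided condition $\limsup_{n} a_n/b_{n+1}=1$ to the two-sided limit $\lim_{n} b_{n+1}/a_n=1$ required in clause ($i_3$) of the $\textbf{SSP}$ definition. Your further observation that $\lim_k a_k=0$ is redundant, via $a_{n+1}/a_n = (a_{n+1}/b_{n+1})(b_{n+1}/a_n) \to 0\cdot 1 = 0$ for the positive decreasing sequence $(a_n)$, is a correct dividend that the paper does not record.

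One point needs repair. You assert that the gap clause $\lim_n a_n/b_n = 0$ (equivalently $\lim_n (b_n-a_n)/b_n = 1$) is ``imposed on the gaps in both definitions,'' but it does not appear in Definition~\ref{def541} as printed: that definition demands only the component property, the ordering $a_k \geq b_{k+1} > a_{k+1} > 0$, and finiteness of $M(\tilde{L})$ from \eqref{eq542}. Read literally, the lemma would then be false. Indeed, take $b_n = 2^{-(n-1)}$, $a_n = 2^{-n}+4^{-n}$, and let $E$ be the complement in $\mathbb{R}^+$ of $\bigcup_n (a_n,b_n)$: all printed $\textbf{CSP}$ clauses hold with $M(\tilde{L}) = \lim_n (1+2^{-n}) = 1$, yet every connected component of $ExtE$ satisfies $(b_n-a_n)/b_n \to 1/2$, so no admissible selection of components can meet ($i_3$) of the $\textbf{SSP}$ definition, and $E \notin \textbf{SSP}$. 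The resolution is that Definition~\ref{def541} is an incomplete transcription of the $\textbf{CSP}$ notion of \cite{bd1}, whose characterizing sequences do carry the strong-porosity clause on the gaps; your proof silently restores exactly that clause, which is the intended reading, but you should state explicitly that you import it from \cite{bd1} rather than from the printed Definition~\ref{def541}, since with the printed text your matching step has nothing to match against.
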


To formulate an infinitesimal characterization of $\textbf{CSP}$-sets we introduce the following two quantities

Let $\mathcal{F}= \{(X_i, d_i, p_i):i \in I \}$ be a family of metric spaces with marked points $p_i \in X_i$. Write
\begin{equation*}
D_i = \{ d_i(x,p_i):x \in X_i \}, ~i \in I.
\end{equation*}
Define
\begin{equation*}
\rho^{*}(X_i) = \sup_{t \in D_i}t ~\text{and}~R^{*}(\mathcal{F})= \sup_{i \in I}\rho{*}(X_i)
\end{equation*}
and, respectively,
\begin{equation*}
\rho_{*}(X_i)=\left\{\begin{array}{ccc}\inf\{t:t \in D_i \backslash \{0\}  \} & ~\text{if}~ & D_i \neq \{0\} \\+\infty & ~\text{if}~ & D_i=\{0\},\end{array}\right.
\end{equation*}
and
\begin{equation*}
R_{*}(\mathcal{F})=\inf \rho_{*}(X_i).
\end{equation*}
\begin{lemma}\label{l544}
Let $\mathcal{F}=\{(X_i, d_i, p_i):i \in I\}$ be a nonempty family of metric space with marked points $p_i$. Then the following statements are equivalent.

(\textit{i}) The equality $D_i=\{0,1\}$ holds for every $i \in I$.

(\textit{ii}) We have $R^{*}(\mathcal{F})= R_{*}(\mathcal{F})=1$.
\end{lemma}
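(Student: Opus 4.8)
The plan is to establish the two implications of the equivalence separately; both are obtained by directly unwinding the definitions of $\rho^{*}$, $R^{*}$, $\rho_{*}$ and $R_{*}$. I would first dispatch the easy direction (\textit{i}) $\Rightarrow$ (\textit{ii}). Assuming $D_i = \{0,1\}$ for every $i \in I$, we have $\rho^{*}(X_i) = \sup\{0,1\} = 1$, and since $D_i \neq \{0\}$ we also have $\rho_{*}(X_i) = \inf(\{0,1\} \setminus \{0\}) = \inf\{1\} = 1$. Taking the supremum and the infimum over $i \in I$ then gives $R^{*}(\mathcal{F}) = 1$ and $R_{*}(\mathcal{F}) = 1$, which is (\textit{ii}).

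For the converse (\textit{ii}) $\Rightarrow$ (\textit{i}) the strategy is a two-sided squeeze on each set $D_i$. Note first that $d_i(p_i, p_i) = 0$, so $0 \in D_i$ for every $i$. From $R^{*}(\mathcal{F}) = \sup_{i \in I} \rho^{*}(X_i) = 1$ we obtain $\sup D_i = \rho^{*}(X_i) \leq 1$ for each $i$, hence $D_i \subseteq [0,1]$. From $R_{*}(\mathcal{F}) = \inf_{i \in I} \rho_{*}(X_i) = 1$ we obtain $\rho_{*}(X_i) \geq 1$ for each $i$, i.e. every nonzero element of $D_i$ is at least $1$. Combining the two conclusions, any $t \in D_i$ is either $0$ or satisfies both $t \geq 1$ and $t \leq 1$; in either case $t \in \{0,1\}$, so $D_i \subseteq \{0,1\}$ for every $i$.

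The step I expect to require the most care is the promotion of the inclusion $D_i \subseteq \{0,1\}$ to the equality $D_i = \{0,1\}$, that is, showing $1 \in D_i$ for each $i$. This is precisely where the nondegeneracy of the member spaces must enter: the convention $\rho_{*}(X_i) = +\infty$ for $D_i = \{0\}$ means that a one-point member contributes $+\infty$ to the infimum defining $R_{*}(\mathcal{F})$ and $0$ to the supremum defining $R^{*}(\mathcal{F})$, so it would leave both equalities of (\textit{ii}) untouched while failing $D_i = \{0,1\}$; thus one must know that each $X_i$ contains a point other than $p_i$, so that $D_i \neq \{0\}$. Granting this, for every $i$ choose $x \in X_i$ with $d_i(x,p_i) \neq 0$; then $d_i(x,p_i) \in D_i \setminus \{0\}$, and the inclusion $D_i \subseteq \{0,1\}$ forces $d_i(x,p_i) = 1$. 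Hence $1 \in D_i$, and together with $0 \in D_i$ and $D_i \subseteq \{0,1\}$ this gives $D_i = \{0,1\}$, completing the proof of (\textit{ii}) $\Rightarrow$ (\textit{i}).
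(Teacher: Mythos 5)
Your argument follows the same route as the paper's own proof: the direction (\textit{i}) $\Rightarrow$ (\textit{ii}) is a direct computation, and for the converse the paper makes exactly your two-sided squeeze, phrased there as $(0,1)\cap D_i=\emptyset$ (otherwise $R_{*}(\mathcal{F})<1$) and $(1,\infty)\cap D_i=\emptyset$ (otherwise $R^{*}(\mathcal{F})>1$), which together give $D_i\subseteq\{0,1\}$. Up to this point the two proofs are interchangeable.

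The difference is the final step, and your caution there is not pedantry but hits a real defect. The paper, having obtained the inclusion $D_i\subseteq\{0,1\}$, simply asserts that the implication (\textit{ii}) $\Rightarrow$ (\textit{i}) ``follows,'' with no argument that $1\in D_i$ for \emph{every} $i$. As you observe, no such argument can exist under the stated hypotheses: take $I=\{1,2\}$, let $X_1$ consist of $p_1$ and one further point at distance $1$ from $p_1$, and let $X_2=\{p_2\}$ be a one-point space. Then $R^{*}(\mathcal{F})=\max(1,0)=1$ and $R_{*}(\mathcal{F})=\min(1,+\infty)=1$, so (\textit{ii}) holds, while $D_2=\{0\}\neq\{0,1\}$; the lemma is false as literally stated. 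Your added hypothesis --- that each $X_i$ contains a point other than $p_i$, so $D_i\neq\{0\}$ --- is precisely the needed repair, and your derivation of $D_i=\{0,1\}$ from it is correct. It is also worth noting that the repair costs nothing where the lemma is actually used: in the proof of Theorem \ref{teo540} the family is $\mathbf{^1\Omega^{E}_{0}}$, whose members by definition satisfy $1\in\overline{\Omega}^{E}_{0,\tilde{r}}$, hence automatically have $\{0,1\}\subseteq D_i$ and in particular are never one-point spaces. So your proposal is, if anything, a corrected version of the paper's proof rather than a flawed one.
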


\begin{proof}
The implication (\textit{i}) $\Rightarrow$ (\textit{ii}) is trivial. Suppose that (\textit{ii}) holds. If there is $i_0 \in I$ such that
\begin{equation*}
(0,1) \cap D_i \neq \emptyset,
\end{equation*}
then the strict inequality $R_{*}(\mathcal{F}) < 1$ holds contrary to (\textit{ii}). Hence $(0,1) \cap D_i =\emptyset$. Similarly we see that $(1, \infty)\cap D_i = \emptyset$ for every $i \in I$. The implication (\textit{ii}) $\Rightarrow$ (\textit{i}) follows.
\end{proof}

Let $0 \in E \subseteq \mathbb{R}^+$. Define the set $\mathbf{^1\Omega^{E}_{0}}$ of pretangent spaces $\Omega^{E}_{0, \tilde{r}}$ by the rule

\begin{equation*}
\left( \Omega^{E}_{0, \tilde{r}} \in \mathbf{^1\Omega^{E}_{0}} \right) \Leftrightarrow \left( 1 \in \overline{\Omega}^{E}_{0, \tilde{r}} \right).
\end{equation*}

The next lemma follows directly from Theorem 4.6 and formula (4.11) of \cite{bdk}.

\begin{lemma}\label{l545}
Let $0 \in E \subseteq \mathbb{R}^+$ and let $0 \in acE$. Then the following three conditions are equivalent.

(\textit{i}) The inequality
\begin{equation*}
R^{*}(\mathbf{^1\Omega^{E}_{0}}) < \infty
\end{equation*}
holds.

(\textit{ii}) The inequality
\begin{equation*}
R_{*}(\mathbf{^1\Omega^{E}_{0}}) > 0
\end{equation*}
holds.

(\textit{iii}) The set $E$ is a \textbf{CSP}-set.

Moreover if $E$ is a \textbf{CSP}-set, then
\begin{equation*}
R^{*}(\mathbf{^1\Omega^{E}_{0}})= M(\tilde{L}) ~\text{and}~R_{*}(\mathbf{^1\Omega^{E}_{0}})= \frac{1}{M(\tilde{L})}
\end{equation*}
where $M(\tilde{L})$ is defined by \eqref{eq542}.
\end{lemma}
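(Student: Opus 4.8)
The plan is to translate the two quantities $R^{*}(\mathbf{^1\Omega^{E}_{0}})$ and $R_{*}(\mathbf{^1\Omega^{E}_{0}})$ into statements about ratios of points of $E$, and then to read the gap data $\tilde{L}$ off from these ratios. First I would unpack the definitions with the tools already at hand. By Proposition \ref{p10} each space in $\mathbf{^1\Omega^{E}_{0}}$ is identified with the set $\overline{\Omega}^{E}_{0,\tilde{r}}\subseteq\mathbb{R}^{+}$, and its distance set $D$ equals $\overline{\Omega}^{E}_{0,\tilde{r}}$. The requirement $1\in\overline{\Omega}^{E}_{0,\tilde{r}}$ produces, by Corollary \ref{cor37}, a sequence $(x_n)\in\tilde{E}$ with $x_n/r_n\to1$; since $y_n/r_n=(y_n/x_n)(x_n/r_n)$, replacing $\tilde{r}$ by $(x_n)$ leaves $\overline{\Omega}^{E}_{0,\tilde{r}}$ unchanged and keeps $1$ in it, so I may assume $r_n\in E$ throughout. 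With this normalization I would prove the reformulation
\begin{equation*}
R^{*}(\mathbf{^1\Omega^{E}_{0}})=\sup S,\qquad R_{*}(\mathbf{^1\Omega^{E}_{0}})=\inf S,
\end{equation*}
where $S$ is the set of all finite positive numbers of the form $\lim_{n\to\infty}y_n/x_n$ with $(x_n),(y_n)\in\tilde{E}$. Because $S$ is nonempty ($1\in S$, taking $y_n=x_n$) and invariant under $t\mapsto 1/t$ (swap the roles of $x$ and $y$), one gets at once $R_{*}=1/R^{*}$; and since $R^{*}\geq1$ always, the equivalence (\textit{i})$\Leftrightarrow$(\textit{ii}) together with the reciprocal relation in the ``Moreover'' clause follows immediately. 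This reduces everything to computing $\sup S$.

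Next I would prove (\textit{iii})$\Rightarrow$(\textit{i}) with the sharp value $\sup S=M(\tilde{L})$. Assume $E$ is a $\textbf{CSP}$-set with data $\tilde{L}=((a_n,b_n))_{n}$ and $M=M(\tilde{L})<\infty$. For the lower bound $\sup S\geq M$ I would pick, along a subsequence realizing $\limsup_n a_n/b_{n+1}=M$, the pairs $y_n=a_{k_n}$, $x_n=b_{k_n+1}$; both are endpoints of connected components of $ExtE$, hence lie in $E$ (after replacing $E$ by its closure, which changes nothing by Corollary \ref{c3}), both tend to $0$, and $y_n/x_n\to M$, so $M\in S$. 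The reverse bound $\sup S\leq M$ is the heart of the matter: given $E$-sequences with $y_n/x_n\to t\in[1,\infty)$ I must show $t\leq M$. Here I would use a block decomposition of $E$ near $0$ into the ``large'' gaps $(a_k,b_k)$ and the ``short'' connecting blocks $[b_{k+1},a_k]$; crossing a large gap multiplies the scale by $b_k/a_k\to\infty$, so two points of $E$ whose ratio stays bounded must asymptotically lie in a common block, on which every ratio is at most $a_k/b_{k+1}\leq M+o(1)$. Passing to the limit gives $t\leq M$, hence $\sup S=M$, so $R^{*}=M$ and $R_{*}=1/M$.

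Finally I would establish the converse (\textit{i})$\Rightarrow$(\textit{iii}) in contrapositive form: if $E$ is not a $\textbf{CSP}$-set then $\sup S=\infty$. Concretely, failure of $\textbf{CSP}$ means that no selection of components of $ExtE$ has finite $M(\tilde{L})$, and from this one manufactures $E$-sequences with $y_n/x_n\to t$ for arbitrarily large finite $t$, forcing $R^{*}=\infty$. The same block analysis runs in reverse when $\sup S=R^{*}<\infty$: the pairs of consecutive points of $E$ realizing ratios close to $R^{*}$ single out genuine components of $ExtE$ that, ordered by size, satisfy conditions (\textit{i}$_1$)--(\textit{i}$_3$) of Definition \ref{def541} with $M(\tilde{L})=R^{*}<\infty$, so $E$ is $\textbf{CSP}$.

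The main obstacle is the sharp two-sided estimate $\sup S=M(\tilde{L})$, i.e. the claim that bounded finite ratios of points of $E$ are governed precisely by the inter-gap spacing $\limsup_n a_n/b_{n+1}$ and by nothing else; the two limiting regimes (crossing a large gap versus moving within a block) must be separated cleanly, and one must verify that unlisted small gaps inside a block never create a larger finite ratio. This is exactly the quantitative content encoded in Theorem~4.6 and formula~(4.11) of \cite{bdk} (and, structurally, in Theorems~27 and~42 of \cite{bd1}). The shortest route, which I would take in the final write-up, is therefore to check that $R^{*}(\mathbf{^1\Omega^{E}_{0}})$ and $R_{*}(\mathbf{^1\Omega^{E}_{0}})$ as defined here coincide with the extremal quantities of \cite{bdk}, Theorem~4.6, and then to quote that theorem together with formula~(4.11); the block-decomposition argument sketched above is the self-contained alternative.
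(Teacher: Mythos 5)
Your recommended final route --- identifying $R^{*}(\mathbf{^1\Omega^{E}_{0}})$ and $R_{*}(\mathbf{^1\Omega^{E}_{0}})$ with the extremal quantities of \cite{bdk} and then quoting Theorem~4.6 together with formula~(4.11) --- is precisely the paper's entire proof, which consists of that citation alone, and your preliminary reduction via Proposition~\ref{p10} and Corollary~\ref{cor37} (normalizing $\tilde{r}\subseteq E$, so that $R^{*}=\sup S$ and $R_{*}=\inf S=1/\sup S$ by the reciprocal symmetry of $S$, which gives (\textit{i})$\Leftrightarrow$(\textit{ii}) and the ``Moreover'' relation at once) is correct. One caveat worth recording: your self-contained block argument tacitly uses $b_k/a_k\to\infty$, which belongs to the genuine \textbf{CSP} notion of \cite{bd1} but is absent from the paper's restated Definition~\ref{def541} --- and it is genuinely needed, since with the definition as literally printed the set $E=\{0\}\cup\{2^{-n}:n\geq 0\}$ with all components $(2^{-n-1},2^{-n})$ listed satisfies (\textit{i}$_1$)--(\textit{i}$_3$) with $M(\tilde{L})=1$ while $R^{*}(\mathbf{^1\Omega^{E}_{0}})=\infty$, so the identity $R^{*}=M(\tilde{L})$ would fail without that extra condition.
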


Now we are ready to prove the theorem formulated above.

\textit{Proof of Theorem \ref{teo540}}  By Lemma \ref{l543} we have $E \in \textbf{SSP}$ if and only if
\begin{equation}\label{eq545}
E \in \textbf{CSP} ~\text{and}~M(\tilde{L})=1.
\end{equation}
Suppose that inequality \eqref{eq541} holds. We must show that conditions \eqref{eq545}  are satisfied. Let $\overline{\Omega}^{E}_{0, \tilde{r}} \in \mathbf{^1\Omega^{E}_{0}}$. Then inequality \eqref{eq541} implies the equality
\begin{equation*}
\overline{\Omega}^{E}_{0, \tilde{r}} = \{0,1\}.
\end{equation*}
It follows from the last equality that
\begin{equation*}
R^{*}(\mathbf{^1\Omega^{E}_{0}})= R_{*}(\mathbf{^1\Omega^{E}_{0}})=1.
\end{equation*}
Hence, by Lemma \ref{l544}, conditions \eqref{eq545} hold. Similarly if conditions \eqref{eq545} hold, then inequality \eqref{eq541} follows from Lemma \ref{l544} and Lemma \ref{l545}. \qquad\qquad\qquad$\Box$

\medskip

Define a function $F:E\times E \to \mathbb{R}^+$ as
\begin{equation*}
F(x,y)=\left\{\begin{array}{ccc}\frac{|x-y|(x \wedge y)}{(x \vee y)^2} & \text{if} & (x,y) \neq (0,0) \\0 & \text{if} & (x,y)=(0,0)\end{array}\right.
\end{equation*}
where $x \vee y= \max\{x,y\}$ and $x \wedge y= \min\{x,y\}$. Using Theorem \ref{teo540} which was proved above and Theorem 2.2 from \cite{dovgoshey2} we obtain the following.

\begin{corollary}\label{son}
Let $0 \in E \subseteq \mathbb{R}$. Then $E$ is a \textbf{SSP}-set if and only if
\begin{equation*}
\lim_{x,y \to 0} F(x,y)=0.
\end{equation*}
\end{corollary}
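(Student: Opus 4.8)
The plan is to obtain the statement by transitivity, combining the infinitesimal characterization of $\textbf{SSP}$-sets already established in Theorem \ref{teo540} with Theorem 2.2 of \cite{dovgoshey2}. By Theorem \ref{teo540}, the relation $E\in\textbf{SSP}$ is equivalent to the requirement that every pretangent space $\Omega^{E}_{0,\tilde{r}}$ have at most two points, and Theorem 2.2 of \cite{dovgoshey2}, specialized to $E\subseteq\mathbb{R}^{+}$ with the Euclidean metric (so that the distance to the marked point is $d(x,0)=x$), converts this cardinality condition into the single limit relation $\lim_{x,y\to 0}F(x,y)=0$. It therefore suffices to chain these two equivalences. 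Before doing so I would dispose of the case $0\notin acE$: then $E\in\textbf{SSP}$ holds by definition, while the only pairs $(x,y)\in E\times E$ with $x,y$ near $0$ are $(0,0)$, so $\lim_{x,y\to0}F(x,y)=F(0,0)=0$ is trivially true and both sides of the equivalence hold.

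The reason $F$ is the correct quantity — and the computation I would isolate first — is the elementary identity that for $0<x\le y$, writing $s=x/y\in(0,1]$,
\[
F(x,y)=\frac{(y-x)\,x}{y^{2}}=s(1-s),
\]
so $F(x,y)$ stays bounded away from $0$ exactly when the ratio $x/y$ remains in a compact subinterval of $(0,1)$. This is precisely the bridge to pretangent spaces, since a nonzero point of $\overline{\Omega}^{E}_{0,\tilde{r}}$ is a limit $\lim_{n}x_{n}/r_{n}$, and two distinct nonzero points $a<b$, realized by sequences $(x_{n}),(y_{n})\in\tilde{E}$, force $x_{n}/y_{n}\to a/b\in(0,1)$, i.e. $F(x_{n},y_{n})\to(a/b)(1-a/b)>0$. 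For self-containedness I would verify the equivalence between $card(\Omega^{E}_{0,\tilde{r}})\le 2$ for all $\tilde{r}$ and $\lim_{x,y\to0}F(x,y)=0$ directly from this identity: if the limit holds but some pretangent space had a third point, its two nonzero points would yield representing sequences with $F\not\to0$, a contradiction; conversely, if the limit fails, I would pass to a subsequence with $F(x_{n},y_{n})\to L>0$ and (by symmetry of $F$) $x_{n}\le y_{n}$, extract $x_{n}/y_{n}\to t\in(0,1)$, and take $r_{n}:=y_{n}$, whereupon Corollary \ref{cor37} places both $(x_{n})$ and $(y_{n})$ in $\tilde{E}_{0,\tilde{r}}$, so that $0$, $t$, $1$ are three distinct points of $\overline{\Omega}^{E}_{0,\tilde{r}}$.

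I expect the main obstacle to be one of matching hypotheses rather than of genuine difficulty: one must check that Theorem 2.2 of \cite{dovgoshey2}, stated in the ambient generality of that paper, applies verbatim to the present $E$ and reduces to exactly the function $F$ under the normalization $d(x,0)=x$. In the self-contained route the only delicate point is the reverse direction above — the production of a genuine third pretangent point — where one must ensure that the three values $0$, $t$, $1$ are pairwise distinct (guaranteed by $t\in(0,1)$) and that $r_{n}=y_{n}$ is a legitimate normalizing sequence, which holds because $F(x_{n},y_{n})>0$ forces $x_{n},y_{n}>0$.
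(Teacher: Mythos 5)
Your proposal is correct, and at the top level it is exactly the paper's own route: the paper's entire proof is the single remark that the corollary follows by combining Theorem \ref{teo540} with Theorem 2.2 of \cite{dovgoshey2}. What you add, and the paper does not, is a self-contained proof of the bridge that the paper outsources to \cite{dovgoshey2}, namely the equivalence of ``$card(\Omega^{E}_{0,\tilde{r}})\leq 2$ for every $\tilde{r}$'' with $\lim_{x,y\to 0}F(x,y)=0$. That argument is sound: the identity $F(x,y)=s(1-s)$, $s=x/y$ for $0<x\leq y$, shows that failure of the limit yields (after symmetrizing and extracting) sequences $x_n<y_n$ in $E\setminus\{0\}$ tending to $0$ with $x_n/y_n\to t\in(0,1)$, and $\tilde{r}=(y_n)$ is a legitimate normalizing sequence for which Corollary \ref{cor37} together with Propositions \ref{p9} and \ref{p10} produces the three distinct points $0$, $t$, $1$ of $\overline{\Omega}^{E}_{0,\tilde{r}}$; conversely, two distinct nonzero points $a<b$ of some $\overline{\Omega}^{E}_{0,\tilde{r}}$ give representing sequences with $F(x_n,y_n)\to (a/b)(1-a/b)>0$, contradicting the limit. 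Your treatment of the degenerate case $0\notin acE$ also matches the paper's convention that such sets are in $\textbf{SSP}$ by definition. The practical difference between the two write-ups: the paper's citation-only proof is shorter but depends on the reader checking that Theorem 2.2 of \cite{dovgoshey2}, stated for general pointed metric spaces, specializes to exactly this $F$ when $d(x,0)=x$; your version eliminates that dependence (precisely the hypothesis-matching risk you flag) at the cost of a paragraph of elementary work, making the corollary independent of the external reference.
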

\textbf{Acknowledgements}

The research of the second author was supported by a grant received from TUB\.{I}TAK within 2221-Fellowship Programme for Visiting Scientists and Scientists on Subbatical Leave.

\end{document}